\providecommand{\U}[1]{\protect \rule{.1in}{.1in}}
\newtheorem{theorem}{Theorem}[section]
\newtheorem{definition}[theorem]{Definition}
\newtheorem{lemma}[theorem]{Lemma}
\newtheorem{proposition}[theorem]{Proposition}
\newtheorem{remark}[theorem]{Remark}
\newenvironment{proof}[1][Proof]{\noindent \textbf{#1.} }{\  $\Box$}
\newenvironment{proof of Theorem 4.7}[1][Proof of Theorem 4.7]{\noindent \textbf{#1.} }{\  $\Box$}
\numberwithin{equation}{section}
\begin{document}
	
	\title{An averaging principle for nonlinear parabolic PDEs via reflected FBSDEs driven by $G$-Brownian motion }
	\author{Mengyao Hou \thanks{Shandong University, Jinan, Shandong 250100, PR China. houmengyao05@163.com.
		    }
	}
	\maketitle
	
	\textbf{Abstract}. In this paper, we are concerned with the averaging problem for a class of forward-backward stochastic differential equations with reflection driven by $G$-Brownian motion (reflected $G$-FBSDEs), which corresponds to the singular perturbation problem of a kind of fully nonlinear partial differential equations (PDEs) with a lower obstacle. The reflection keeps the solution above a given stochastic process. By the use of the nonlinear stochastic techniques and viscosity solution methods, we prove that the limit distribution of solution is the unique viscosity solution of an obstacle problem for a fully
	nonlinear parabolic PDEs.
	
	{\textbf{Key words}. } Averaging principle; $G$-Brownian motion; Reflected backward
	stochastic differential equation; Fully nonlinear PDE
	
	\textbf{AMS subject classifications.} 60H10
	
	\addcontentsline{toc}{section}{\hspace*{1.8em}Abstract}
	
	\section{Introduction}
	
	\noindent The averaging principle, initiated by Khasminskii in theseminal work \cite{K}, is a very efficient and important tool in the study of stochastic differential equations (SDEs) for modelling problems arising in many practical research areas. In fact, averaging principle is indeed an effective method for studying dynamical systems with highly oscillating coefficients. Under certain suitable conditions, the highly oscillating components can be ``averaged out'' to establish an averaged system. Given that the averaged system governs the evolution of the original system over long time scales, we can analyze complex stochastic equations with related averaging stochastic equations, which paves a convenient and easy way to conclude some significant properties. Since then, the averaging principle for diffusion processes have been studied with great interest, it also provides a powerful tool for the research of singular perturbation problems for a kind of fully nonlinear PDEs. In particular, Pardoux and Veretennikov \cite{PV} adopted the weak convergence mehods to relax the assumptions  built on the coefficients. For more research on this aspect, we refer the reader to \cite{BEP,CF,CL,FW,HW,WR} and the references therein.\par
Motivated by the need to address financial problems under volatility uncertainty and the associated fully nonlinear PDEs, Peng \cite{Peng1,Peng2,Peng3} introduced a novel nonlinear expectation theory known as the $G$-expectation theory. In this framework, a new type of Brownian motion and the corresponding stochastic calculus of It\^o's type were constructed. Based on the $G$-expectation theory, Hu et al. \cite{HJP1} formulated a new kind of backward stochastic differential equations with a nonincreasing $G$-martingale ($G$-BSDEs), and established the well-posedness of $G$-BSDEs. And also, the comparison principle, the Feynman-Kac formula and Girsanov transformation were also be concluded in \cite{HJP2}.\par

Especially, \cite{HJW} studied the averaging problem for the following forward-backward stochastic differential equations driven by $G$-Brownian motion ($G$-FBSDEs) with rapidly oscillating coefficients:
	\begin{equation}\label{(1.1)}
		  \left\{
		\begin{aligned}
			X^{\varepsilon,t,x}_{s}=&\,x+\displaystyle\int_t^{s} b\Big(\frac{r}{\varepsilon},X^{\varepsilon,t,x}_{r}\Big)dr+\sum\limits_{i,j=1}^d\int_t^{s} h_{ij}\Big(\frac{r}{\varepsilon},X^{\varepsilon,t,x}_{r}\Big)d\langle B^{i},B^{j}\rangle_{r}+\int_t^{s} \sigma\Big(\frac{r}{\varepsilon},X^{\varepsilon,t,x}_{r}\Big)dB_{r},\\
			Y^{\varepsilon,t,x}_{s}=&\,\varphi(X^{\varepsilon,t,x}_{T})+\int_s^{T} f\Big(\frac{r}{\varepsilon},X^{\varepsilon,t,x}_{r},Y^{\varepsilon,t,x}_{r},Z^{\varepsilon,t,x}_{r}\Big)dr-\int_s^{T}Z^{\varepsilon,t,x}_{r} dB_{r}-\big(K^{\varepsilon,t,x}_{T}-K^{\varepsilon,t,x}_{s}\big)\\
			&+\sum_{i,j=1}^d\int_s^{T} g_{ij}\Big(\frac{r}{\varepsilon},X^{\varepsilon,t,x}_{r},Y^{\varepsilon,t,x}_{r},Z^{\varepsilon,t,x}_{r}\Big)d\langle B^{i},B^{j}\rangle_{r},
		\end{aligned} \right.
	\end{equation}
where $B=(B^1,\ldots,B^d\mathbf{)}^\top$ is a $d$-dimensional $G$-Brownian motion defined in the $G$-expectation space $(\Omega_{T},$ $L^{1}_{G}(\Omega_{T}),\hat{\mathbb{E}})$. They prove that the limit distribution of the solution $Y^{\varepsilon,t,x}$ is the unique viscosity solution to a fully nonlinear PDE. More detailed results can see \cite{HJW}.\par

Owing to the importance in BSDE theory and in applications, many scholars tried to explore more relevant conclusions and relax the conditions on the coefficients. El Karoui et al. \cite{EKPP} introduced the problem of reflected backward stochastic differential equation (reflected BSDE), which means that the solution to this kind of equation is required to be above a certain given continuous boundary process, called the obstacle. In \cite{CK} and \cite{HLJP}, They proved the existence and uniqueness of the equation when there are two reflecting obstacles. In this setting, the solution is constrained to remain between two specified continuous processes, known as the lower and upper obstacles. \par

In recent years, Li et al. \cite{LPS} introduced reflected $G$-BSDEs with a lower obstacle. Existence was establi-\\ shed through approximation via penalization, while uniqueness was derived from a prior estimates. They also built the relation between reflected $G$-BSDEs and the corresponding obstacle problems for fully nonlinear parabolic PDEs by nonlinear Feynman-Kac formula under the $G$-expectation framework. For further insights, readers may refer to \cite{LP,LS,L}. Among them, \cite{LP} constructed reflected $G$-BSDE with an upper obstacle. The study of reflected $G$-BSDEs with two obstacles is undertaken by \cite{LS}. A new kind of approximate Skorohod condition is proposed to derive the uniqueness and existence of the solutions when the upper obstacle is a generalized $G$-It\^{o} process. \cite{L} considered the reflected backward stochastic differential equations driven by $G$-Brownian motion with time-varying Lipschitz coefficients.  \par

In this paper, we mainly study the following dynamical systems with the lower obstacle: for each $(t,x) \in [0,T]\times \mathbb{R}^n$, $\varepsilon \in (0,1)$,
		\begin{equation}\label{(1.2)}
		\left\{
		\begin{aligned} 
			X^{\varepsilon,t,x}_{s}=&\,x+\displaystyle\int_t^{s} b\Big(\frac{r}{\varepsilon},X^{\varepsilon,t,x}_{r}\Big)dr+\sum\limits_{i,j=1}^d\int_t^{s} h_{ij}\Big(\frac{r}{\varepsilon},X^{\varepsilon,t,x}_{r}\Big)d\langle B^{i},B^{j}\rangle_{r}+\int_t^{s} \sigma\Big(\frac{r}{\varepsilon},X^{\varepsilon,t,x}_{r}\Big)dB_{r},\\
			Y^{\varepsilon,t,x}_{s}=&\,\varphi(X^{\varepsilon,t,x}_{T})+\int_s^{T} f\Big(\frac{r}{\varepsilon},X^{\varepsilon,t,x}_{r},Y^{\varepsilon,t,x}_{r},Z^{\varepsilon,t,x}_{r}\Big)dr-\int_s^{T}Z^{\varepsilon,t,x}_{r} dB_{r}+\big(A^{\varepsilon,t,x}_{T}-A^{\varepsilon,t,x}_{s}\big)\\
			&+\sum_{i,j=1}^d\int_s^{T} g_{ij}\Big(\frac{r}{\varepsilon},X^{\varepsilon,t,x}_{r},Y^{\varepsilon,t,x}_{r},Z^{\varepsilon,t,x}_{r}\Big)d\langle B^{i},B^{j}\rangle_{r},
		\end{aligned}  \right.
	\end{equation}
where $B=(B^1,\ldots,B^d\mathbf{)}^\top$ is a $d$-dimensional $G$-Brownian motion defined in the $G$-expectation space $(\Omega_{T},L^{1}_{G}(\Omega_{T}),\hat{\mathbb{E}})$. For the coefficients of the equation \eqref{(1.2)}, $b,h_{ij}=h_{ji}:\mathbb{R}^{+}\times\mathbb{R}^n \rightarrow \mathbb{R}^n$, $\sigma:\mathbb{R}^{+} \times\mathbb{R}^n \rightarrow \mathbb{R}^{n \times d }$, $f,g_{ij}=g_{ji}:\mathbb{R}^{+} \times \mathbb{R}^n \times \mathbb{R} \times \mathbb{R}^{1 \times d} \rightarrow \mathbb{R}$, $\varphi:\mathbb{R}^n \rightarrow \mathbb{R}$, $S: \mathbb{R}^{+} \times \mathbb{R}^n \rightarrow \mathbb{R}$ are continuous deterministic functions. We denote by \eqref{(1.2)} the reflected $G$-FBSDE with a lower obstacle $L^{\varepsilon,t,x}_{s}=$ $S(s,X^{\varepsilon,t,x}_{s})$. Our goal is to study the asymptotic property of $Y^{\varepsilon,t,x}$ as $\varepsilon \rightarrow 0$.\par

The difficulties of dealing with this kind of equation mainly originate from the following two aspects. On the one hand, for the classical cases, we can adopt the martingale approach to deal with the limit process of the solution sequence. But the nonlinear martingale method is still imperfect, the classical martingale approach breaks down in the nonlinear $G$-expectation framework. On the other hand, owing to the restriction of the lower obstacle, we have no way to verify that the terminal value is indeed not less than the value of the obstacle process. To this end, we draw support from the ideas on the asymptotic behavior of $Y^{\varepsilon,t,x}$ in \cite{HJW} and Feynman-Kac formula in \cite{LPS} and construct two auxiliary $G$-BSDEs to overcome these problems.\par

In this work, we firstly introduce some priori estimates with respect to the equation \eqref{(1.2)} under the regularity assumptions. With the help of the Feynman-Kac formula for the reflected $G$-FBSDE (see \cite{LPS}), it allows us to conclude that the unique viscosity solution $(u^{\varepsilon})_{\varepsilon\geq 0}$ of the PDE is uniformly bounded and equi-continuous. According to the Arzel\`a-Ascoli theorem, we obtain the convergent subsequence of the viscosity solutions. Note that, compared with the result in \cite{HJW}, we consider in this paper a different averaging problem\\ for a class of reflected $G$-FBSDEs. Due to the restriction of the given lower obstacle, the method of conducting asymptotic analysis by using the stochastic backward semigroup approach (see Peng \cite{Peng4}) can not be directly applied in certain cases. To overcome it, we adopt the approximation method via penalization to construct two auxiliary stochastic backward semigroups based on the general $G$-BSDE instead of reflected $G$-BSDE, which can transform the reflected $G$-BSDE in \eqref{(1.2)} into a general $G$-BSDE. In fact, it eliminates the influence of the lower obstacle on terminal values to a certain extent. Thus, we can still apply original method used in \cite{HJW} for the auxiliary $G$-BSDE to fulfill the asymptotic analysis.\par

Throughout this paper, the letter $C$ will denote a positive constant, with or without subscript, its value may change in different occasions. We will write the dependence of the constant on parameters explicitly if it is essential.\par

The rest of the paper is organized as follows. In Section \ref{section 2}, we present some basic notations and results with respect to the $G$-expectation theory. In Section \ref{section 3}, we introduce the averaging problem and give some priori estimates for the solution of reflected $G$-FBSDE. In Section \ref{section 4}, we state our main results and establish an approximation theorem as an averaging principle.
	
	\section{Preliminaries}\label{section 2}
	
	\noindent We recall some basic results and notions of $G$-expectation, which are needed in the sequel. More relevant details can be found in \cite{Peng1,Peng2,Peng3}.\par 
In this paper, let $\Omega= C^{d}_{0}(\mathbb{R}^{+})$ be the space of all $\mathbb{R}^{d}$-valued continuous paths $(\omega_t)_{t\geq 0}$ with $\omega_0=0$, equipped with the distance
\begin{align}
		\rho\big(\omega^{(1)},\omega^{(2)}\big)\coloneqq \sum_{i=1}^{\infty}2^{-i}\Big[\Big(\max_{t\in[0,i]}\big\vert \omega^{(1)}_t-\omega^{(2)}_t \big\vert\Big) \wedge 1\Big], \quad \mathrm{for}\  \omega^{(1)},\omega^{(2)}\in \Omega.\notag
\end{align}
Let $B_t(\omega) = \omega_t$ be the canonical process. For each $t \in [0,\infty)$, define
$\Omega_{t}=\{\omega_{\cdot \vee t}: \omega \in \Omega\}$. Set $Lip(\Omega)=\bigcup_{t \geq 0}Lip(\Omega_t)$ and
\begin{equation}
	Lip(\Omega_{t})=\{\varphi(B_{t_1 \wedge t},\ldots,B_{t_n \wedge t}): n\geq 1,\,t_1,\ldots,t_n\in\left[0,\infty\right),\,\varphi \in C_{l,Lip}(\mathbb{R}^{d\times n})\},\notag
\end{equation}
where $C_{l,Lip}(\mathbb{R}^{d\times n})$ denotes the space of local Lipschitz continuous functions on $\mathbb{R}^{d\times n}$.\par
Let $G(\cdot):\mathbb{S}(d) \rightarrow \mathbb{R}$ be a given monotone and sublinear function. By Theorem 1.2.1 in Peng \cite{Peng3}, there exists a bounded, convex and closed subset $\Sigma \subset \mathbb{S}_{+}(d)$, such that
\begin{align}
	G(A)=\frac{1}{2}\sup_{B \in \Sigma}(A,B),\quad A\in \mathbb{S}(d).\notag
\end{align}\par
For this, Peng \cite{Peng3} construct a sequence of
$d$-dimensional random vectors $(\xi_i)^{\infty}_{i
=1}$ on a sublinear expectation space $(\widetilde{\Omega},\widetilde{\mathcal{H}}, \widetilde{\mathbb{E}})$ such that $\xi_i$ is $G$-normally distributed and $\xi_{i+1}$ is independent from $(\xi_1,\ldots, \xi_i)$ for
each $i = 1, 2,\cdots $. \par
Now the definition of sublinear expectation $\hat{\mathbb{E}}$ defined on $Lip(\Omega)$ will be presented via the following proce- dure:
for each $X \in Lip(\Omega)$ with
\begin{align}
X=\varphi(B_{t_1}-B_{t_0},B_{t_2}-B_{t_1},\ldots,B_{t_n}-B_{t_{n-1}})\notag
\end{align}
for some $\varphi \in C_{l,Lip}(\mathbb{R}^{d \times n})$ and $0=t_0 \,\textless\,t_1\,\textless\,\cdots\,\textless\, t_n \,\textless\, \infty $, we set
\begin{align}
	\hat{\mathbb{E}}[\varphi(B_{t_1}-B_{t_0},B_{t_2}-B_{t_1},\ldots,B_{t_n}-B_{t_{n-1}})]
	\coloneqq \widetilde{\mathbb{E}}[\varphi(\sqrt{t_1-t_0}\xi_1,\ldots,\sqrt{t_{n}-t_{n-1}}\xi_{n})].\notag
\end{align}
The related conditional expectation of $X= \varphi(B_{t_1},B_{t_2}-B_{t_1},\ldots,B_{t_n}-B_{t_{n-1}})$ under $\Omega_{t_j}$ is defined by
\begin{equation}
	\hat{\mathbb{E}}_{t_j}[X]
	=
	\hat{\mathbb{E}}_{t_j}[\varphi(B_{t_1},B_{t_2}-B_{t_1},\ldots,B_{t_n}-B_{t_{n-1}})]
	\coloneqq \psi(B_{t_1},B_{t_2}-B_{t_1},\ldots,B_{t_j}-B_{t_{j-1}}),\notag
\end{equation}
where
\begin{align}
	 \psi(x_1,\ldots,x_j)=\widetilde{\mathbb{E}}[\varphi(x_1,\ldots,x_j,\sqrt{t_{j+1}-t_{j}}\xi_{j+1},\ldots,\sqrt{t_{n}-t_{n-1}}\xi_{n})].\notag
\end{align}
The corresponding canonical process $(B_{t})_{t \geq 0}$ on the sublinear expectation space $(\Omega,Lip(\Omega),\hat{\mathbb{E}})$ is called a $G$-Brownian motion.\par
It is easy to check that $(\hat{\mathbb{E}}_{t})_{t \geq 0}$ satisfies the following properties.
\begin{proposition}\label{proposition 2.1}
	For each $X,Y \in Lip(\Omega)$ and $ t \geq 0\mathrm{,}$ we have\\
	${}\hspace{0.62em}$$\bm{(\mathbf{i})}\ $Monotonicity: If $X \geq Y\mathrm{,}$ then $\hat{\mathbb{E}}_{t}[X] \geq \hat{\mathbb{E}}_{t}[Y]$$\mathrm{;}$\\
	${}\hspace{0.31em}$$\bm{(\mathbf{ii})}\ $Constant preserving: $\hat{\mathbb{E}}_{t}[X] =X\mathrm{,}$ for $X \in Lip(\Omega_{t})$$\mathrm{;}$\\
	$\bm{(\mathbf{iii})}\ $Sub-additivity: $\hat{\mathbb{E}}_{t}[X+Y] \leq \hat{\mathbb{E}}_{t}[X]+ \hat{\mathbb{E}}_{t}[Y]$$\mathrm{;}$\\
	$\bm{(\mathbf{iv})}\ $Positive homogeneity: $\hat{\mathbb{E}}_{t}[XY] \leq X^+\hat{\mathbb{E}}_{t}[Y]+ X^-\hat{\mathbb{E}}_{t}[-Y]\mathrm{,}$ for $X \in Lip(\Omega_{t})$$\mathrm{;}$\\
	${}\hspace{0.33em}$$\bm{(\mathbf{v})}\ $Consistency: $\hat{\mathbb{E}}_{s}[\hat{\mathbb{E}}_{t}[X]]=\hat{\mathbb{E}}_{s \wedge t}[X]\mathrm{,}$ in particular$\mathrm{,}$ $\hat{\mathbb{E}}[\hat{\mathbb{E}}_{t}[X]]=\hat{\mathbb{E}}[X]$.
\end{proposition}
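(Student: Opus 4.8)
The plan is to reduce each of the five assertions to the corresponding elementary property of the sublinear expectation $\widetilde{\mathbb E}$ on the auxiliary space $(\widetilde\Omega,\widetilde{\mathcal H},\widetilde{\mathbb E})$ carrying the $G$-normal sequence $(\xi_i)_{i\ge1}$, by pushing it through the defining formula for the conditional expectation. First I would record two routine preliminaries. (a) Every $X\in Lip(\Omega)$ can be written in the increment form $X=\varphi(B_{t_1},B_{t_2}-B_{t_1},\dots,B_{t_n}-B_{t_{n-1}})$ with $\varphi\in C_{l,Lip}(\mathbb R^{d\times n})$ (an invertible linear change of the spatial variables preserves $C_{l,Lip}$), and for a pair $X,Y$ one may choose a \emph{common} partition $0=t_0<t_1<\cdots<t_n$ and a common index $j$ with $t=t_j$, since inserting extra time points only appends arguments that the generating function ignores. (b) The value $\hat{\mathbb E}_{t_j}[X]$ does not depend on the representation chosen in (a); this is checked directly from the construction of $\widetilde{\mathbb E}$, using that $\sqrt{t_{k+1}-t_k}\,\xi_{k+1}$ is $G$-normal with the correct covariance whether or not the split at $t_k$ is genuine. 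I will abbreviate $\hat{\mathbb E}_{t_j}[X]=\psi_\varphi(B_{t_1},\dots,B_{t_j}-B_{t_{j-1}})$ with $\psi_\varphi(y_1,\dots,y_j)\coloneqq\widetilde{\mathbb E}[\varphi(y_1,\dots,y_j,\sqrt{t_{j+1}-t_j}\,\xi_{j+1},\dots,\sqrt{t_n-t_{n-1}}\,\xi_n)]$, and note that $\psi_\varphi\in C_{l,Lip}(\mathbb R^{d\times j})$, so $\hat{\mathbb E}_{t_j}[X]\in Lip(\Omega_{t_j})$.

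With these reductions, (i)--(iv) become one-line transfers. For monotonicity, $X\ge Y$ forces $\varphi\ge\widetilde\varphi$ pointwise on $\mathbb R^{d\times n}$ (the increments of the canonical process sweep out all of $\mathbb R^d$), hence $\psi_\varphi\ge\psi_{\widetilde\varphi}$ by monotonicity of $\widetilde{\mathbb E}$. For constant preserving, if $X\in Lip(\Omega_{t})$ then after rewriting $X$ through the increments up to $t=t_j$ the function $\varphi$ ignores its last $n-j$ arguments, so $\psi_\varphi=\varphi$ by constant preserving of $\widetilde{\mathbb E}$. For sub-additivity, $\psi_{\varphi+\widetilde\varphi}\le\psi_\varphi+\psi_{\widetilde\varphi}$ by sub-additivity of $\widetilde{\mathbb E}$. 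For positive homogeneity, write $X=\eta(B_{t_1},\dots,B_{t_j}-B_{t_{j-1}})$ with $\eta\in C_{l,Lip}$, so $XY$ is generated by $(y_1,\dots,y_n)\mapsto\eta(y_1,\dots,y_j)\,\varphi(y_1,\dots,y_n)$; applying positive homogeneity of $\widetilde{\mathbb E}$ with the real number $\eta(y_1,\dots,y_j)$ frozen gives $\psi_{XY}\le\eta(\cdot)^+\psi_\varphi+\eta(\cdot)^-\psi_{-\varphi}$, i.e. $\hat{\mathbb E}_{t_j}[XY]\le X^+\hat{\mathbb E}_{t_j}[Y]+X^-\hat{\mathbb E}_{t_j}[-Y]$.

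The one assertion with real content is consistency (v). Fix a common partition with $s=t_i$ and $t=t_j$. If $i\ge j$, then $\hat{\mathbb E}_{t_j}[X]\in Lip(\Omega_{t_j})\subset Lip(\Omega_{t_i})$, so by (ii) $\hat{\mathbb E}_{t_i}[\hat{\mathbb E}_{t_j}[X]]=\hat{\mathbb E}_{t_j}[X]=\hat{\mathbb E}_{t_i\wedge t_j}[X]$. If $i\le j$, unwinding the two definitions gives $\hat{\mathbb E}_{t_i}[\hat{\mathbb E}_{t_j}[X]]=\chi(B_{t_1},\dots,B_{t_i}-B_{t_{i-1}})$ with
\[
\chi(x_1,\dots,x_i)=\widetilde{\mathbb E}\Big[\,\psi_\varphi\big(x_1,\dots,x_i,\sqrt{t_{i+1}-t_i}\,\xi_{i+1},\dots,\sqrt{t_j-t_{j-1}}\,\xi_j\big)\Big],
\]
and substituting the definition of $\psi_\varphi$ exhibits $\chi$ as a nested $\widetilde{\mathbb E}$-operation over the two disjoint blocks $\{\xi_{i+1},\dots,\xi_j\}$ and $\{\xi_{j+1},\dots,\xi_n\}$. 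The point is that this nested operation collapses to the single expectation $\widetilde{\mathbb E}[\varphi(x_1,\dots,x_i,\sqrt{t_{i+1}-t_i}\,\xi_{i+1},\dots,\sqrt{t_n-t_{n-1}}\,\xi_n)]$, which is exactly the function defining $\hat{\mathbb E}_{t_i}[X]$; the special case $i=0$ yields $\hat{\mathbb E}[\hat{\mathbb E}_{t}[X]]=\hat{\mathbb E}[X]$. I expect this collapse to be the only genuine obstacle: it is precisely the iterated-expectation identity built into Peng's construction of $(\widetilde\Omega,\widetilde{\mathcal H},\widetilde{\mathbb E})$ --- equivalently, a consequence of the definition of independence in a sublinear expectation space, applied to the blocks of the sequence $(\xi_k)$ and pushed through by induction. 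Everything else is bookkeeping around the common-representation reduction.
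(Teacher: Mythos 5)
Your proposal is correct and is essentially the standard verification from Peng's construction, which is all the paper itself relies on: Proposition~2.1 is recalled without proof (``It is easy to check\dots''), with details deferred to Peng's monographs. Your common-partition reduction, the pointwise transfer of (i)--(iv) to $\widetilde{\mathbb{E}}$, and the identification of consistency (v) with the iterated-expectation identity built into the independence of the blocks of $(\xi_k)$ reproduce exactly that standard argument.
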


For $p \geq 1$, denote by
	\begin{equation}
		L^{p}_{G}(\Omega)\coloneqq \big\{\mathrm{the}\ \mathrm{completion}\ \mathrm{of}\ \mathrm{the}\ \mathrm{space}\ Lip(\Omega)\ \mathrm{under}\ \mathrm{the}\ \mathrm{norm}\ \Vert X \Vert_{L^p_G}\coloneqq(\hat{\mathbb{E}}[\vert X\vert^{p}])^{\frac{1}{p}}\big\}.\notag
	\end{equation}
Similarly, we can define $L^{p}_{G}(\Omega_{T})$ for any $T \in \left[0,\infty \right)$, and let $S^{0}_{G}(0,T)=\{h(t,B_{t_1 \wedge t},\ldots,B_{t_n \wedge t}): t_1,\ldots,t_n\in [0,T],h\in C_{b,Lip}(\mathbb{R}\times \mathbb{R}^{d \times n})\}$. For each given $p \geq 1 $ and $\eta \in S^{0}_{G}(0,T)$, define $\Vert \eta \Vert_{S^{p}_{G}}=(\hat{\mathbb{E}}[\sup_{t \in [0,T]}\vert \eta_{t}\vert^{p}])^{\frac{1}{p}}$ and denote by $S^{p}_{G}(0,T)$ the completion of $S^{0}_{G}(0,T)$ under $\Vert \cdot \Vert_{S^{p}_{G}}$. In this paper, we suppose that $G$ is non- degenerate, i.e., there exists two constants $0 \,\textless\, \underline{\sigma}^{2} \leq \bar{\sigma}^{2} \,\textless\, \infty$, such that
\begin{equation}
	\frac{1}{2} \underline{\sigma}^{2}tr[A-B]
	\leq
    G(A)-G(B) 
    \leq 
    \frac{1}{2} \bar{\sigma}^{2}tr[A-B]\ \mathrm{for}\ A\geq B.\notag
\end{equation}

\begin{definition}\label{definition 2.2}
\rm{A} process $\{M_{t}\}_{t \geq 0}$ is called a $G$-martingale if for each $t \geq 0$, $M_{t} \in L^1_{G}(\Omega_{t})$ and 
		\begin{equation}
		\hat{\mathbb{E}}_{s}[M_t]=M_{s},\quad\forall 0 \leq s \leq t.\notag
		\end{equation}
	\end{definition}

\begin{definition}
\rm{L}et $M^{0}_{G}(0,T)$ be the collection of processes in the following form: for a given partition $\{t_0,t_1,\ldots,t_N\}$ of $[0,T]$,
\begin{align}
	\eta_{t}(\omega)=\sum\limits_{n=0}^{N-1} \xi_{n}(\omega)\mathbb{I}_{\left[t_n,t_{n+1}\right)}(t),\notag
\end{align}
where $\xi_{n} \in Lip(\Omega_{t_n})$, $n=0,1,\ldots,N-1$. For each given $p \geq 1$ and $\eta \in M^{0}_{G}(0,T)$, define $\Vert \eta \Vert_{M^{p}_{G}}=(\hat{\mathbb{E}}[\int\nolimits_{0}^{T}\vert \eta_s \vert^{p} ds])^{\frac{1}{p}}$
and denote by $M^{p}_{G}(0,T)$ the completion of $M^{0}_{G}(0,T)$ under $\Vert \cdot \Vert_{M^{p}_{G}}$.
\end{definition}\par

For each $1 \leq i,j \leq d$, we denote by $(\langle B\rangle_{t})_{ij}\coloneqq \langle B^{i},B^{j} \rangle_{t}$ the mutual variation process. Then for two random processes $\xi \in M^{1}_{G}(0,T) ,\eta \in M^{2}_{G}(0,T)$, the $G$-It\^o integrals $\{\int\nolimits_{0}^{t} \xi_{s}d\langle B^{i},B^{j} \rangle_{s}\}_{0 \leq t \leq T}$ and $\{\int\nolimits_{0}^{t} \eta_{s}dB^{i}_{s}\}_{0 \leq t \leq T}$ are well defined (see Peng \cite{Peng2,Peng3}). In particular, we present some properties as follows, which will be used in this paper.

\begin{proposition}\label{proposition 2.4} \rm{(\cite{Peng2,Peng3})} $Assume$ $that$ $\xi \in M^{p}_{G}(0,T;\mathbb{R}^{d})$ $and$ $\eta \in M^{p}_{G}(0,T;\mathbb{S}(d))$. $Then$, $for$ $each$ $p \geq 2$, $we$ $have$\vspace{1.5pt}\\
	${}\hspace{0.62em}$$\bm{(\mathbf{i})}\  \hat{\mathbb{E}}\Big[\int\nolimits_{0}^{T} \xi_{t}dB_{t}\Big]=0$$\mathrm{;}$\vspace{1.5pt}\\
	${}\hspace{0.31em}$$\bm{(\mathbf{ii})}\  \hat{\mathbb{E}}\bigg[\sup\limits_{t \in [0,T]}\big\vert \int\nolimits_{0}^{t} \xi_{s}dB_{s}\big\vert^{p}\bigg] \leq C(p)\hat{\mathbb{E}}\bigg[\Big(\int\nolimits_{0}^{T} \vert \xi_{t}\vert^{2}dt\Big)^{\frac{p}{2}}\bigg]$$\mathrm{;}$\vspace{1.5pt}\\
	$\bm{(\mathbf{iii})}\  \hat{\mathbb{E}}\bigg[\Big\vert \int\nolimits_{0}^{T} \eta_{t}d\langle B\rangle_{t}\Big\vert^{p}\bigg] \leq C(p,T)\hat{\mathbb{E}}\bigg[\int\nolimits_{0}^{T} \vert \eta_{t}\vert^{p}dt\bigg]$.
\end{proposition}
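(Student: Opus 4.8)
The plan is to establish the three statements by the standard two-step scheme: first for elementary processes in $M^0_G(0,T)$, and then for general processes in $M^p_G(0,T)$ by a density argument.

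\textbf{(i)} For an elementary $\xi_t=\sum_{k=0}^{N-1}\xi_k\mathbb{I}_{[t_k,t_{k+1})}(t)$ with $\xi_k\in Lip(\Omega_{t_k})$ one has $\int_0^T\xi_t\,dB_t=\sum_{k=0}^{N-1}\xi_k\big(B_{t_{k+1}}-B_{t_k}\big)$. Since $\hat{\mathbb{E}}_{t_k}[B_{t_{k+1}}-B_{t_k}]=\hat{\mathbb{E}}_{t_k}[-(B_{t_{k+1}}-B_{t_k})]=0$, properties (iii)--(iv) of Proposition~\ref{proposition 2.1} force $\hat{\mathbb{E}}_{t_k}\big[\xi_k(B_{t_{k+1}}-B_{t_k})\big]=0$; a backward induction in $k$ using the consistency property (v) of Proposition~\ref{proposition 2.1} then gives $\hat{\mathbb{E}}\big[\int_0^T\xi_t\,dB_t\big]=0$. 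For general $\xi\in M^p_G(0,T;\mathbb{R}^d)$ with $p\ge2$, I would pass to the limit along $\xi^{(m)}\in M^0_G$ with $\|\xi^{(m)}-\xi\|_{M^p_G}\to0$, using sub-additivity together with the $M^2_G$-continuity of the $G$-It\^o integral.

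\textbf{(ii)} This Burkholder--Davis--Gundy-type estimate is the step I expect to be the main obstacle, since the supremum inside the nonlinear expectation is not reachable by the conditional-expectation manipulations used in (i). My route would be to invoke the representation $\hat{\mathbb{E}}[\,\cdot\,]=\sup_{P\in\mathcal{P}}E_P[\,\cdot\,]$, where $\mathcal{P}$ is the family of probability measures associated with $G$. Under each $P\in\mathcal{P}$ the canonical process $B$ is a continuous martingale with $d\langle B\rangle_t=\gamma_t\,dt$, $\gamma_t\in\Sigma\subset\mathbb{S}_{+}(d)$, and the $G$-It\^o integral coincides $P$-a.s.\ with the classical It\^o integral (trivially for elementary $\xi$, then on all of $M^p_G$ by a capacity/approximation argument). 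Writing $M_t:=\int_0^t\xi_s\,dB_s$ and using $\langle M\rangle_T=\sum_{i,j=1}^d\int_0^T\xi^i_s\xi^j_s\,d\langle B^i,B^j\rangle_s\le\bar{\sigma}^2\int_0^T|\xi_s|^2\,ds$ $P$-a.s., the classical BDG inequality under $P$ yields
\begin{align}
E_P\Big[\sup_{t\in[0,T]}|M_t|^p\Big]
&\le C(p)\,E_P\big[\langle M\rangle_T^{p/2}\big]
\le C(p)\,\bar{\sigma}^{\,p}\,E_P\Big[\big({\textstyle\int_0^T|\xi_s|^2\,ds}\big)^{p/2}\Big]\notag\\
&\le C(p)\,\bar{\sigma}^{\,p}\,\hat{\mathbb{E}}\Big[\big({\textstyle\int_0^T|\xi_s|^2\,ds}\big)^{p/2}\Big].\notag
\end{align}
Taking $\sup_{P\in\mathcal{P}}$ on the left-hand side and absorbing $\bar{\sigma}^{\,p}$ into $C(p)$ gives the claim. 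The delicate points here --- the $P$-a.s.\ identification of the two stochastic integrals on all of $M^p_G$ and the measurability of $\omega\mapsto\sup_{t}|M_t(\omega)|$ needed for the upper expectation --- are handled by Peng's $G$-capacity machinery \cite{Peng2,Peng3}.

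\textbf{(iii)} Since $\langle B\rangle$ is absolutely continuous with density $\gamma_t\in\Sigma$ and $\eta$ is $\mathbb{S}(d)$-valued, one has the pathwise bound $\big|\int_0^T\eta_t\,d\langle B\rangle_t\big|=\big|\int_0^T\mathrm{tr}[\eta_t\gamma_t]\,dt\big|\le c\int_0^T|\eta_t|\,dt$ with a constant $c=c(\bar{\sigma},d)$. Raising this to the $p$-th power, applying H\"older's inequality in time, $\big(\int_0^T|\eta_t|\,dt\big)^p\le T^{p-1}\int_0^T|\eta_t|^p\,dt$, and then taking $\hat{\mathbb{E}}$ and invoking monotonicity gives (iii) for elementary $\eta$; the general case follows by density, since this same estimate shows that $\eta\mapsto\int_0^T\eta_t\,d\langle B\rangle_t$ is continuous from $M^p_G(0,T;\mathbb{S}(d))$ into $L^p_G(\Omega_T)$.
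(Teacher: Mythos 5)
The paper itself offers no proof of Proposition \ref{proposition 2.4}: it is quoted directly from Peng \cite{Peng2,Peng3}, so there is no in-paper argument to compare against. Judged on its own terms, your sketch is correct and follows the standard route in exactly those references: (i) is Peng's argument verbatim (the two-sided application of property (iv) of Proposition \ref{proposition 2.1} to $\pm\xi_k(B_{t_{k+1}}-B_{t_k})$, backward induction via consistency, then density, noting $M^p_G\subset M^2_G$ for $p\ge 2$); (ii) via the representation $\hat{\mathbb{E}}=\sup_{P\in\mathcal{P}}E_P$ and the classical BDG inequality under each $P$ is the Denis--Hu--Peng approach, which is how this estimate is usually obtained; (iii) reduces, as you say, to the quasi-sure bound $d\langle B\rangle_t=\gamma_t\,dt$ with $\gamma_t\le\bar\sigma^2 I$ plus H\"older in time, and the same inequality gives the continuity needed for the density step. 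Two harmless remarks: the constants you produce in (ii) and (iii) also depend on $\bar\sigma$ (and on $d$ in (iii)), which is consistent with the statement since $\bar\sigma$ is fixed by $G$ but worth saying explicitly; and the genuinely delicate points --- the quasi-sure identification of the $G$-It\^o integral with the $P$-wise It\^o integral on all of $M^p_G$, and the quasi-continuity needed to evaluate $\hat{\mathbb{E}}$ on the path supremum --- are precisely the content of the capacity machinery you defer to, so your sketch is honest about where the real work lies rather than hiding a gap.
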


	\section{Prior estimates of $G$-BSDEs}\label{section 3}
	
	\noindent In this section, we present some assumptions and establish an averaging problem under the $G$-expectation framework. We proceed to derive a series of relevant estimates regarding the solution processes perturbed by a small parameter $\varepsilon \in (0,1)$ throughout the rest of this work, which essentially plays an important role in deducing the asymptotic behavior of the reflected $G$-FBSDE with a perturbation parameter $\varepsilon$. \par
For each fixed $T\geq 0$ and $(t,\xi)\in [0,T]\times\bigcap_{p\textgreater2}L^{p}_{G}(\Omega_{t};\mathbb{R}^{n})$, we consider the following dynamical systems on finite time horizon $[t,T]$ with a small parameter $\varepsilon\in (0,1)$:\\
	\begin{equation}\label{(3.1)}
	\left\{
	\begin{aligned} 
		\vspace{-0.25pt}
		X^{\varepsilon,t,\xi}_{s}=&\,\xi+\displaystyle\int_t^{s} b\Big(\frac{r}{\varepsilon},X^{\varepsilon,t,\xi}_{r}\Big)dr+\sum\limits_{i,j=1}^d\int_t^{s} h_{ij}\Big(\frac{r}{\varepsilon},X^{\varepsilon,t,\xi}_{r}\Big)d\langle B^{i},B^{j}\rangle_{r}+\int_t^{s} \sigma\Big(\frac{r}{\varepsilon},X^{\varepsilon,t,\xi}_{r}\Big)dB_{r},\\
		Y^{\varepsilon,t,\xi}_{s}=&\,\varphi(X^{\varepsilon,t,\xi}_{T})+\int_s^{T} f\Big(\frac{r}{\varepsilon},X^{\varepsilon,t,\xi}_{r},Y^{\varepsilon,t,\xi}_{r},Z^{\varepsilon,t,\xi}_{r}\Big)dr-\int_s^{T}Z^{\varepsilon,t,\xi}_{r} dB_{r}+\big(A^{\varepsilon,t,\xi}_{T}-A^{\varepsilon,t,\xi}_{s}\big)\\
		&+\sum_{i,j=1}^d\int_s^{T} g_{ij}\Big(\frac{r}{\varepsilon},X^{\varepsilon,t,\xi}_{r},Y^{\varepsilon,t,\xi}_{r},Z^{\varepsilon,t,\xi}_{r}\Big)d\langle B^{i},B^{j}\rangle_{r},
	\end{aligned}  \right. \vspace{5pt}
\end{equation}
where $B=(B^1,\ldots,B^d\mathbf{)}^\top$ is a $d$-dimensional $G$-Brownian motion defined in the $G$-expectation space $(\Omega_{T},$ $L^{1}_{G}(\Omega_{T}),\hat{\mathbb{E}})$, $\langle B^{i},B^{j}\rangle$ is the corresponding mutual variation process, $b,h_{ij}=h_{ji}:\mathbb{R}^{+}\times\mathbb{R}^n \rightarrow \mathbb{R}^n$, $\sigma:\mathbb{R}^{+} \times\mathbb{R}^n $ $\rightarrow \mathbb{R}^{n \times d }$, $f,g_{ij}=g_{ji}:\mathbb{R}^{+} \times \mathbb{R}^n \times \mathbb{R} \times \mathbb{R}^{1 \times d} \rightarrow \mathbb{R}$, $\varphi:\mathbb{R}^n \rightarrow \mathbb{R}$, $S: \mathbb{R}^{+} \times \mathbb{R}^n \rightarrow \mathbb{R}$ are continuous deterministic functions. $A^{\varepsilon,t,\xi}$ is continuous nondecreasing process with $A^{\varepsilon,t,\xi} \in S^{\alpha}_{G}(t,T)$. We denote by $\{L^{\varepsilon,t,\xi}_{s}\}_{s \in [t,T]}$ the lower obstacle of the $G$-BSDE given above, satisfying $L^{\varepsilon,t,\xi}_{s}=$ $S(s,X^{\varepsilon,t,\xi}_{s})$.\par
We need the following assumptions:\\
\textbf{($\mathbf{H1}$)} There exists a constant $L \,\textgreater\, 0$ such that, for any $s \in \mathbb{R}^{+}$, $x$, $x' \in \mathbb{R}^n$,
\begin{equation}
	\vert \vartheta(s,x)-\vartheta(s,x')\vert \leq L \vert x-x'\vert \ \mathrm{and}\ \vert \vartheta(s,0)\vert \leq L,\ \mathrm{for}\ \vartheta=b,\hspace{0.05em}h_{ij},\hspace{0.05em}\sigma. \notag
\end{equation}
\textbf{($\mathbf{H2}$)} There exist a constant $L \,\textgreater\, 0$ and a positive integer $m$ such that, for any $s\in \mathbb{R}^{+}$, $x$, $x' \in \mathbb{R}^n$, $y,y'\in \mathbb{R}$,\par

\setlength{\parindent}{2.85em}
 and $z$, $z' \in \mathbb{R}^{1 \times d}$,
\begin{equation}
	\vert \vartheta(s,x,y,z)-\vartheta(s,x',y',z')\vert \leq L((1+\vert x\vert^{m}+\vert x'\vert^{m})\vert x-x'\vert\,+\,\vert y-y'\vert\,+\,\vert z-z'\vert) \notag
\end{equation}\par

\setlength{\parindent}{2.85em}
and
\begin{align}
\vert \vartheta(s,0,0,0) \leq L,\ \mathrm{for}\ \vartheta=f,\hspace{0.05em} g_{ij}\ \mathrm{and}\ \varphi. \notag
\end{align}
\textbf{($\mathbf{H3}$)} $S$ is Lipschitz continuous function w.r.t. $x$ and bounded from above, $S(T,x) \leq \varphi(x)$ for any $x \in \mathbb{R}^{n}$.\\
\textbf{($\mathbf{H4}$)} $S\in C^{1,2}_{Lip}([0,T]\times \mathbb{R}^{n} )$ and $S(T,x) \leq \varphi(x)$ for any $x \in \mathbb{R}^{n}$, where $C^{1,2}_{Lip }([0,T]\times \mathbb{R}^{n} )$ is denoted by the\par 

\setlength{\parindent}{2.75em}
space of all functions in $C^{1,2}([0,T]\times \mathbb{R}^{n} )$ whose partial derivatives of order less than or equal to 2 and\par

\setlength{\parindent}{2.75em}
itself are Lipschitz continuous functions with respect to $x$.\par

\setlength{\parindent}{1.5em}
Under assumption (H1), $G$-SDE in \eqref{(3.1)} exists the unique solution $X^{\varepsilon,t,\xi}_{\cdot} \in M^{2}_{G}(t,T;\mathbb{R}^{n})$. Furthermore, the following estimates and more relevant results can be found in Chap. V of Peng \cite{Peng3}.

\begin{lemma}\label{lemma 3.1}
	Suppose that $\mathrm{(H1)}$ holds and $\xi\mathrm{,}$ ${\xi}'\in L^{p}_{G}(\Omega_{t};\mathbb{R}^{n})$ for any $p \geq 2$. Then$\mathrm{,}$ for each $0 \leq t \leq s \leq T\mathrm{,}$ we have$\vspace{3.5pt}$
	\setlength{\parindent}{0.66em}

	$\bm{(\mathbf{i})}$ $\hat{\mathbb{E}}_{t}\bigg[\sup\limits_{r \in [t,T]} \big\vert X^{\varepsilon,t,\xi}_{r}\big\vert^p\bigg] \leq C(L,p,T)(1+\vert \xi \vert^{p})\rm{;}$\par
	\vspace{2pt}
	\setlength{\parindent}{0.33em}
	$\bm{(\mathbf{ii})}$ $\hat{\mathbb{E}}_{t}\bigg[\sup\limits_{r \in [t,s]} \big\vert X^{\varepsilon,t,\xi}_{r}-\xi \big\vert^p\bigg] \leq C(L,p,T)(1+\vert \xi \vert^{p})\vert t-s \vert^{\frac{p}{2}} \rm{;}$\vspace{2pt}\\
	$\bm{(\mathbf{iii})}$ $\hat{\mathbb{E}}_{t}\bigg[\sup\limits_{r \in [t,T]} \big\vert X^{\varepsilon,t,\xi}_{r}-X^{\varepsilon,t,\xi'}_{r} \big\vert^p\bigg] \leq C(L,p,T)\vert \xi-\xi' \vert^{p}$.
\end{lemma}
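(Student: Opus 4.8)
The plan is to prove the three estimates of Lemma \ref{lemma 3.1} by the standard $G$-SDE techniques found in Peng \cite{Peng3}, adapted to handle the oscillating argument $r/\varepsilon$. The key observation is that assumption (H1) gives Lipschitz and linear-growth bounds in $x$ that are \emph{uniform} in the first argument $s\in\mathbb{R}^+$, so the rescaling $s\mapsto s/\varepsilon$ does not affect any of the constants: for every $\varepsilon\in(0,1)$ one has $|b(r/\varepsilon,x)|\leq L(1+|x|)$, $|b(r/\varepsilon,x)-b(r/\varepsilon,x')|\leq L|x-x'|$, and likewise for $h_{ij}$ and $\sigma$. Hence all estimates below hold with constants independent of $\varepsilon$, which is exactly what is needed later for the averaging argument.

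For part $\bm{(\mathbf{i})}$, I would start from the integral equation for $X^{\varepsilon,t,\xi}$, take $\hat{\mathbb{E}}_t[\sup_{r\in[t,\cdot]}|\cdot|^p]$ of both sides, and use subadditivity together with Proposition \ref{proposition 2.4}: for the $dr$ term apply Hölder in time; for the $d\langle B^i,B^j\rangle_r$ terms apply part $\bm{(\mathbf{iii})}$ of Proposition \ref{proposition 2.4}; and for the stochastic integral apply the BDG-type inequality in part $\bm{(\mathbf{ii})}$ of Proposition \ref{proposition 2.4}. Using the linear growth bound from (H1) and the fact that $\langle B\rangle$ has bounded density (non-degeneracy of $G$ gives $d\langle B^i,B^j\rangle_r$ controlled by $\bar\sigma^2\,dr$), one arrives at
\begin{equation}
\hat{\mathbb{E}}_t\Big[\sup_{r\in[t,u]}|X^{\varepsilon,t,\xi}_r|^p\Big]\leq C(L,p,T)\Big(1+|\xi|^p+\int_t^u\hat{\mathbb{E}}_t\big[\sup_{\ell\in[t,r]}|X^{\varepsilon,t,\xi}_\ell|^p\big]\,dr\Big),\notag
\end{equation}
and Gronwall's lemma closes the estimate. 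Part $\bm{(\mathbf{ii})}$ is obtained the same way but keeping the length of the interval $[t,s]$ explicit: bounding $X^{\varepsilon,t,\xi}_r-\xi$ by the three integrals over $[t,s]$, using the growth bound and part $\bm{(\mathbf{i})}$ to control the integrand by $C(1+|\xi|^p)$, and noting that the $dr$ integral over an interval of length $|t-s|$ contributes $|t-s|^p$ while the stochastic integral contributes $|t-s|^{p/2}$ through the BDG inequality — the latter being the dominant term for small $|t-s|$, giving the exponent $p/2$.

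For part $\bm{(\mathbf{iii})}$, I would write the equation for the difference $\Delta X_r:=X^{\varepsilon,t,\xi}_r-X^{\varepsilon,t,\xi'}_r$, which satisfies a linear-type $G$-SDE with zero forcing except the initial datum $\xi-\xi'$; the coefficients of $\Delta X$ are the increments $b(r/\varepsilon,X^{\varepsilon,t,\xi}_r)-b(r/\varepsilon,X^{\varepsilon,t,\xi'}_r)$ etc., which by the uniform Lipschitz bound in (H1) are dominated by $L|\Delta X_r|$. The same three-term estimate plus Gronwall then yields $\hat{\mathbb{E}}_t[\sup_{r\in[t,T]}|\Delta X_r|^p]\leq C(L,p,T)|\xi-\xi'|^p$. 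I do not expect a genuine obstacle here — this is essentially the $G$-SDE a priori estimate of Peng \cite{Peng3} Chapter V — the only point requiring care is verifying that the uniformity of the bounds in the fast variable $r/\varepsilon$ makes all constants $\varepsilon$-free, and that the conditional expectation $\hat{\mathbb{E}}_t$ (rather than $\hat{\mathbb{E}}$) may be used throughout, which is legitimate because $\xi,\xi'\in L^p_G(\Omega_t)$ are $\Omega_t$-measurable and the properties in Proposition \ref{proposition 2.1} together with the conditional forms of Proposition \ref{proposition 2.4} apply.
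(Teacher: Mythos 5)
Your proposal is correct and matches the paper's treatment: the paper gives no proof of Lemma \ref{lemma 3.1} at all, simply invoking the standard a priori estimates for $G$-SDEs from Chap.~V of Peng \cite{Peng3}, and your outline (Proposition \ref{proposition 2.4} plus H\"older, the bound $d\langle B^i,B^j\rangle_r\le \bar{\sigma}^2\,dr$, Gronwall, and the observation that (H1) is uniform in the fast time variable so all constants are $\varepsilon$-free and the conditional versions apply since $\xi,\xi'\in L^p_G(\Omega_t)$) is exactly the standard argument behind those cited estimates.
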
\par
For simplicity, let $c$ be a constant that constrains the function from above, which is mentioned in assumption (H3). We denote by $\mathcal{S}^{2}_{G}(0,T)$ the collection of processes $(Y,Z,A)$ such that $Y \in S^{2}_{G}(0,T),\,Z$ $\in H^{2}_{G}(0,T)$, $A$ is a continuous nondecreasing process with $A_{0}=0$ and $A \in S^{2}_{G}(0,T)$. It follows from Lemma \ref{lemma 5.1} in appendix that, for any $(t,\xi)\in [0,T]\times\bigcap_{p\textgreater2}L^{p}_{G}(\Omega_{t};\mathbb{R}^{n})$, there exists a unique triple $(Y^{\varepsilon,t,\xi},Z^{\varepsilon,t,\xi},A^{\varepsilon,t,\xi}) \in \mathcal{S}^{2}_{G}(t,T)$ under assumptions (H1)-(H3) or (H1)-(H2),\,(H4), which solves the reflected $G$-BSDE given in \eqref{(3.1)}, and we get$\vspace{3pt}$\\
${}\hspace{0.22em}$ (i) $Y^{\varepsilon,t,\xi}_{s}=\varphi(X^{\varepsilon,t,\xi}_{T})+\int_s^{T} f\big(\frac{r}{\varepsilon},X^{\varepsilon,t,\xi}_{r},Y^{\varepsilon,t,\xi}_{r},Z^{\varepsilon,t,\xi}_{r}\big)dr-\int_s^{T}Z^{\varepsilon,t,\xi}_{r} dB_{r}+(A^{\varepsilon,t,\xi}_{T}-A^{\varepsilon,t,\xi}_{s})\vspace{3pt}\\
{} \hspace{5.47em}+\sum\limits_{i,j=1}^d\int_s^{T} g_{ij}\big(\frac{r}{\varepsilon},X^{\varepsilon,t,\xi}_{r},Y^{\varepsilon,t,\xi}_{r},Z^{\varepsilon,t,\xi}_{r}\big)d\langle B^{i},B^{j}\rangle_{r},\ t \leq s \leq T$;\vspace{3pt}\\
${}\ $(ii) $Y^{\varepsilon,t,\xi}_{s} \geq S(s,X^{\varepsilon,t,\xi}_{s}),\ t \leq s \leq T$;\vspace{3pt}\\
(iii) $\{A^{\varepsilon,t,\xi}_{s}\}$ is nondecreasing and continuous, and $\{-\int_t^{s} \big(Y^{\varepsilon,t,\xi}_{r}-S(r,X^{\varepsilon,t,\xi}_{r})\big)dA^{\varepsilon,t,\xi}_{r},t \leq s \leq T \}$ is a nonincr-\\
${}\hspace{2em}$easing $G$-martingale.
\begin{lemma}\label{lemma 3.2}
	Suppose that $\mathrm{(H1)}$-$\mathrm{(H3)}$ hold and $\xi,\,\xi'\in L^{p}_{G}(\Omega_{t};\mathbb{R}^{n})$ for any $p \geq 2$. Then$\mathrm{,}$ for each $0 \leq t \leq s \leq T\mathrm{,}$ we have\par
	
	\setlength{\parindent}{0.66em}
	$\bm{(\mathbf{i})}$ $\big\vert Y^{\varepsilon,t,\xi}_{s}\big\vert \leq C(T,L,c,\underline{\sigma})\big(1+\big\vert X^{\varepsilon,t,\xi}_{s} \big\vert^{m+1}\big)\rm{;}$\par
	
	\setlength{\parindent}{0.33em}
	$\bm{(\mathbf{ii})}$ $\big\vert Y^{\varepsilon,t,\xi}_{s}-Y^{\varepsilon,t,\xi'}_{s}\big\vert^{2} \leq C(L,T,c,\underline{\sigma})\big(1+\big\vert X^{\varepsilon,t,\xi}_{s} \big\vert^{2m}+\big\vert X^{\varepsilon,t,\xi'}_{s} \big\vert^{2m}\big)\big\vert X^{\varepsilon,t,\xi}_{s}-X^{\varepsilon,t,\xi'}_{s} \big\vert^{2}\\
	{} \hspace{10.75em}+C(L,T,c,\underline{\sigma})\big(1+\big\vert X^{\varepsilon,t,\xi}_{s} \big\vert^{m+1}+\big\vert X^{\varepsilon,t,\xi'}_{s} \big\vert^{m+1}\big)\big\vert X^{\varepsilon,t,\xi}_{s}-X^{\varepsilon,t,\xi'}_{s} \big\vert\rm{;}$\\
	$\bm{(\mathbf{iii})}$ $\hat{\mathbb{E}}_{t}\Big[\int\nolimits_t^{T}\vert Z^{\varepsilon,t,\xi}_{r}\big\vert^2dr\Big] \leq C(L,T,c,\underline{\sigma})(1+\vert \xi \vert^{2m+2})$.
\end{lemma}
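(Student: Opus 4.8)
\textbf{Proof proposal for Lemma \ref{lemma 3.2}.}

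The plan is to exploit the nonlinear Feynman--Kac representation for reflected $G$-BSDEs from \cite{LPS}, which gives $Y^{\varepsilon,t,\xi}_{s}=u^{\varepsilon}(s,X^{\varepsilon,t,\xi}_{s})$ for the deterministic function $u^{\varepsilon}(t,x)\coloneqq Y^{\varepsilon,t,x}_{t}$, so that \emph{all three estimates reduce to corresponding estimates on $u^{\varepsilon}$ with $\xi$ replaced by a deterministic $x$.} More precisely, for (i) it suffices to show $|u^{\varepsilon}(t,x)|\le C(1+|x|^{m+1})$ and for (ii) it suffices to show $|u^{\varepsilon}(t,x)-u^{\varepsilon}(t,x')|^{2}\le C(1+|x|^{2m}+|x'|^{2m})|x-x'|^{2}+C(1+|x|^{m+1}+|x'|^{m+1})|x-x'|$; then one substitutes $x=X^{\varepsilon,t,\xi}_{s}(\omega)$, $x'=X^{\varepsilon,t,\xi'}_{s}(\omega)$ using the flow property $Y^{\varepsilon,t,\xi}_{s}=u^{\varepsilon}(s,X^{\varepsilon,t,\xi}_{s})$ and $Y^{\varepsilon,t,\xi'}_{s}=u^{\varepsilon}(s,X^{\varepsilon,t,\xi'}_{s})$ (valid because the forward flow is Markovian).

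For (i), I would bound $Y^{\varepsilon,t,x}$ directly from the reflected $G$-BSDE. Using the comparison principle for reflected $G$-BSDEs, $Y^{\varepsilon,t,x}$ is squeezed between the solution of the same equation with obstacle removed (an upper bound, since removing the obstacle can only lower the solution — actually one compares with the $G$-BSDE whose terminal condition is $\varphi$ and driver is the same, which dominates from below) and, for the upper estimate, one uses that $S\le c$ together with the a priori estimate for (non-reflected) $G$-BSDEs: feeding the linear growth of $f,g_{ij}$ in $(x,y,z)$ from (H2) — with the polynomial factor $(1+|x|^{m})$ on the $x$-Lipschitz part giving growth $|x|^{m+1}$ — and the polynomial growth $|\varphi(x)|\le C(1+|x|^{m+1})$ into the standard estimate $|Y^{\varepsilon,t,x}_{s}|\le C\hat{\mathbb{E}}_{s}[\,|\varphi(X^{\varepsilon,t,x}_{T})|+\int_s^T(\dots)dr\,]$, and then invoking Lemma \ref{lemma 3.1}(i) to control $\hat{\mathbb{E}}_{s}[\sup_{r}|X^{\varepsilon,t,x}_{r}|^{m+1}]\le C(1+|x|^{m+1})$. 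The constant depends on $T,L,c,\underline{\sigma}$ as claimed, $\underline{\sigma}$ entering through the $d\langle B\rangle$-terms and the a priori constants for $G$-BSDEs.

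For (ii), set $\hat{Y}=Y^{\varepsilon,t,\xi}-Y^{\varepsilon,t,\xi'}$, $\hat{X}=X^{\varepsilon,t,\xi}-X^{\varepsilon,t,\xi'}$, and similarly for $\hat{Z}$. Apply the a priori stability estimate for reflected $G$-BSDEs (Lemma \ref{lemma 5.1} in the appendix, or its analogue): the difference $|\hat{Y}_{s}|^{2}$ is controlled by $\hat{\mathbb{E}}_{s}[\,|\varphi(X^{\varepsilon,t,\xi}_{T})-\varphi(X^{\varepsilon,t,\xi'}_{T})|^{2}+(\int_s^T \Delta f_r\,dr)^2+\dots]$ where $\Delta f_r$ collects the $x$-increments of $f$ and $g_{ij}$, i.e.\ terms of the form $L(1+|X^{\varepsilon,t,\xi}_{r}|^{m}+|X^{\varepsilon,t,\xi'}_{r}|^{m})|\hat{X}_{r}|$; after Cauchy--Schwarz this produces the two structural terms, one quadratic in $|\hat X|$ with a degree-$2m$ polynomial weight (from squaring the Lipschitz-increment terms) and one linear in $|\hat X|$ with a degree-$(m+1)$ weight (cross terms, using $|\varphi(x)-\varphi(x')|\le L(1+|x|^m+|x'|^m)|x-x'|$ once more combined with the a priori bound on $|\hat Y|$ from part (i)-type estimates). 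Finally one uses Lemma \ref{lemma 3.1}(i) and (iii) to pass from $X^{\varepsilon,t,\xi}_{r}$ at a running time $r$ back to $X^{\varepsilon,t,\xi}_{s}$ and $|\hat X_s|$: concretely, $\hat{\mathbb{E}}_{s}[(1+|X^{\varepsilon,t,\xi}_{r}|^{2m}+\dots)|\hat X_r|^{2}]$ is bounded via Hölder by a product of $\hat{\mathbb{E}}_{s}[(1+|X^{\varepsilon,t,\xi}_{r}|^{q})]^{\text{power}}$ and $\hat{\mathbb{E}}_{s}[\sup_r|\hat X_r|^{q'}]^{\text{power}}$, the latter $\le C|\hat X_s|^{2}$ after restarting the flow at time $s$ from $X^{\varepsilon,t,\xi}_{s},X^{\varepsilon,t,\xi'}_{s}$ and applying Lemma \ref{lemma 3.1}(iii). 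Estimate (iii) then follows by applying $G$-It\^o's formula to $|Y^{\varepsilon,t,\xi}|^{2}$ on $[t,T]$, taking conditional $G$-expectation to kill the $dB$ and $G$-martingale parts, using the nonincreasing-$G$-martingale property of $-\int(Y-S)dA$ together with $0\le A$ to control the $dA$ term (via $\int Y\,dA\le \int S\,dA\le c(A_T-A_t)$ and Young's inequality, with $\hat{\mathbb{E}}_t[(A_T-A_t)^2]$ bounded by the a priori estimate in terms of $1+|\xi|^{2m+2}$), and absorbing $\hat{\mathbb{E}}_t[\int|Z|^2]$ on the left via Gronwall; the growth $1+|\xi|^{2m+2}$ appears because $|\varphi(X_T)|^2$ and the squared driver contribute $|X_T|^{2m+2}$, controlled by Lemma \ref{lemma 3.1}(i).

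The main obstacle I anticipate is handling the polynomial (non-Lipschitz, only locally Lipschitz with $m$-growth) dependence on $x$ in (H2) uniformly in $\varepsilon$: one must be careful that the a priori constants coming out of the reflected $G$-BSDE estimates depend only on $L,m,T,c,\underline{\sigma},\bar{\sigma}$ and not on $\varepsilon$ — this is automatic here since $\varepsilon$ only rescales the time argument $s\mapsto s/\varepsilon$ of coefficients, which leaves the Lipschitz/growth constants untouched — and that, in (ii), the Hölder-exponent bookkeeping when splitting $\hat{\mathbb{E}}_{s}[(\text{polynomial in }X)\cdot|\hat X|^{k}]$ is done so that the $X$-moments stay finite (using $\xi,\xi'\in\bigcap_{p>2}L^p_G$) and the final power of $|\hat X_s|$ is exactly $2$ (resp.\ $1$), matching the claimed structural form rather than some unwanted exponent. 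This is where most of the genuine work sits; the rest is a fairly mechanical combination of Lemma \ref{lemma 3.1}, the $G$-BSDE a priori estimates, and the Feynman--Kac identification from \cite{LPS}.
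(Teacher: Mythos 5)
Your estimation mechanics largely coincide with the paper's (conditional a priori estimates for reflected $G$-BSDEs, the growth bounds from (H2)--(H3), and the flow identity $X^{\varepsilon,t,\xi}_{r}=X^{\varepsilon,s,X^{\varepsilon,t,\xi}_{s}}_{r}$ combined with Lemma \ref{lemma 3.1}), but two points need repair. First, the opening reduction is logically premature: you invoke $Y^{\varepsilon,t,\xi}_{s}=u^{\varepsilon}(s,X^{\varepsilon,t,\xi}_{s})$ for \emph{random} $\xi$ ``because the forward flow is Markovian,'' yet in this paper that identification (Lemma \ref{lemma 3.3}) is stated only for deterministic starting points, comes \emph{after} Lemma \ref{lemma 3.2}, and its extension to random initial data requires precisely the continuity/stability estimates being proved (compare the approximation argument in Lemma \ref{Lemma 4.8}, which rests on Lemma \ref{lemma 4.6}, which rests on Lemma \ref{lemma 4.4}, which rests on Lemma \ref{lemma 3.2}). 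The detour is fortunately unnecessary: the conditional estimates you then describe (the analogue of Lemma \ref{lemma 5.2} for (i) and of Lemma \ref{lemma 5.5} for (ii), applied at time $s$) hold directly for $\xi\in\bigcap_{p>2}L^{p}_{G}(\Omega_{t};\mathbb{R}^{n})$, which is exactly how the paper proceeds; drop the Feynman--Kac reduction and run the estimates on $Y^{\varepsilon,t,\xi}$ itself.

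Second, in (ii) you misidentify where the term $C(1+|X^{\varepsilon,t,\xi}_{s}|^{m+1}+|X^{\varepsilon,t,\xi'}_{s}|^{m+1})|X^{\varepsilon,t,\xi}_{s}-X^{\varepsilon,t,\xi'}_{s}|$ comes from. It is not a ``cross term'' produced by squaring the Lipschitz increments of $\varphi$ and $f$ together with a priori bounds on $\hat Y$: those increments only ever yield the quadratic contribution $(1+|X|^{2m}+|X'|^{2m})|\hat X|^{2}$. The linear term is forced by the \emph{obstacle-difference} part of the stability estimate for reflected $G$-BSDEs (Lemma \ref{lemma 5.5}, not Lemma \ref{lemma 5.1} as you cite), in which $\big(\hat{\mathbb{E}}_{s}[\sup_{r}|S(r,X^{\varepsilon,t,\xi}_{r})-S(r,X^{\varepsilon,t,\xi'}_{r})|^{2}]\big)^{1/2}$ enters only to the first power and is multiplied by $\Psi_{s,T}^{1/2}$, whose growth $C(1+|X_{s}^{\varepsilon,t,\xi}|^{m+1}+|X_{s}^{\varepsilon,t,\xi'}|^{m+1})$ supplies the polynomial weight; the Lipschitz property of $S$ in (H3) then gives the single factor $|\hat X_{s}|$. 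Your sketch hides this term in a ``$+\dots$'' and explains its origin incorrectly, so as written the argument would not produce the stated bound for the reflected equation; including and estimating the $\hat L$-term as in the paper ($I_{3}(I_{4}+I_{5})^{1/2}$) fixes it. For (iii), your $G$-It\^o re-derivation is a plausible substitute for the paper's direct citation of Lemma \ref{lemma 5.4} combined with (i), \eqref{(3.4)} and Lemma \ref{lemma 3.1}, though it essentially reproves that appendix estimate.
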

\begin{proof}
	Without loss of generality, we assume that $g_{ij}=0,\,i,\,j=1,\dots,d$ in the following proof.\\
	${}\hspace{1.5em}$(i) Applying Lemma \ref{lemma 5.2} in appendix, we obtain
	\begin{equation}\label{(3.2)}
		\big\vert Y^{\varepsilon,t,\xi}_{s}\big\vert^{2} \leq C(T,L,c,\underline{\sigma}) \hat{\mathbb{E}}_{s}\bigg[1+\big\vert \varphi(X^{\varepsilon,t,\xi}_{T}) \big\vert^{2}+\int\nolimits_{s}^{T}\Big\vert f\Big(\frac{r}{\varepsilon},X^{\varepsilon,t,\xi}_{r},0,0\Big) \Big\vert^{2} dr\bigg].
	\end{equation}
With the help of assumption (H2), we have
    \begin{align}\label{(3.3)}
    	\big\vert \varphi(X^{\varepsilon,t,\xi}_{T}) \big\vert 
    	\leq 
    	L\big(1+\big\vert X^{\varepsilon,t,\xi}_{T} \big\vert^{m}\big)\big\vert X^{\varepsilon,t,\xi}_{T} \big\vert+\big\vert \varphi(0) \big\vert 
    	\leq 
    	C(L)\big(1+\big\vert X^{\varepsilon,t,\xi}_{T} \big\vert^{m+1}\big),
     \end{align}
 and 
    \begin{align}
    	\Big\vert f\Big(\frac{r}{\varepsilon},X^{\varepsilon,t,\xi}_{r},0,0\Big) \Big\vert 
 	    \leq C(L)\big(1+\big\vert X^{\varepsilon,t,\xi}_{r} \big\vert^{m+1}\big).\label{(3.4)}
    \end{align}
Substituting \eqref{(3.3)} and \eqref{(3.4)} into \eqref{(3.2)}, we get
    \begin{equation}\label{(3.5)}
    	\begin{aligned}
    	\big\vert Y^{\varepsilon,t,\xi}_{s}\big\vert^{2} 
    	\leq
    	 &\,C(T,L,c,\underline{\sigma}) \hat{\mathbb{E}}_{s}\bigg[ \big(1+\big\vert X^{\varepsilon,t,\xi}_{T} \big\vert^{2m+2}\big) +\int\nolimits_{s}^{T}\big(1+\big\vert X^{\varepsilon,t,\xi}_{r} \big\vert^{2m+2}\big) dr\bigg]\\
    	\leq &\,C(T,L,c,\underline{\sigma})\bigg(1+\hat{\mathbb{E}}_{s}\bigg[\sup\limits_{r \in [s,T]}\big\vert X^{\varepsilon,t,\xi}_{r} \big\vert^{2m+2}\bigg]\bigg).
        \end{aligned} 
    \end{equation}
According to the uniqueness of solution to the $G$-SDE in \eqref{(3.1)}, it is not hard to  obtain $X^{\varepsilon,t,\xi}_{r}=X^{\varepsilon,s,X^{\varepsilon,t,\xi}_{s}}_{r}$ for any $r \in [s,T]$. In the view of this relation and Lemma \ref{lemma 3.2}\,(i), we derive that
     \begin{equation}\label{(3.6)}
	     \hat{\mathbb{E}}_{s}\bigg[\sup\limits_{r \in [s,T]}\big\vert X^{\varepsilon,t,\xi}_{r} \big\vert^{2m+2}\bigg]
	     =
	     \hat{\mathbb{E}}_{s}\bigg[\sup\limits_{r \in [s,T]}\big\vert X^{\varepsilon,s,X^{\varepsilon,t,\xi}_{s}}_{r} \big\vert^{2m+2}\bigg]
	     \leq 
	     C(L,T)(1+\big\vert X^{\varepsilon,t,\xi}_{s} \big\vert^{2m+2}).
     \end{equation}
Thus, from \eqref{(3.5)}-\eqref{(3.6)}, we get
     \begin{equation}
     	\big\vert Y^{\varepsilon,t,\xi}_{s}\big\vert 
     	\leq 
     	C(T,L,c,\underline{\sigma})\big(1+\big\vert X^{\varepsilon,t,\xi}_{s} \big\vert^{m+1}\big).\notag
     \end{equation}
Thus, we obtain the desired result.\\
 ${}\hspace{1.5em}$(ii)
By Lemma \ref{lemma 5.5} in appendix, we have
     \begin{equation}\label{(3.7)}
     	\begin{aligned}
     	\big\vert Y^{\varepsilon,t,\xi}_{s}-Y^{\varepsilon,t,\xi'}_{s}\big\vert^{2}
     	\leq
     	&\,C\hat{\mathbb{E}}_{s}\Big[\big\vert \varphi(X^{\varepsilon,t,\xi}_{T})-\varphi(X^{\varepsilon,t,\xi'}_{T})\big\vert^{2}\Big]\\
     	&+C\hat{\mathbb{E}}_{s}\bigg[\int\nolimits_{s}^{T} \Big\vert  f\Big(\frac{r}{\varepsilon},X^{\varepsilon,t,\xi}_{r},Y^{\varepsilon,t,\xi}_{r},Z^{\varepsilon,t,\xi}_{r}\Big)- f\Big(\frac{r}{\varepsilon},X^{\varepsilon,t,\xi'}_{r},Y^{\varepsilon,t,\xi}_{r},Z^{\varepsilon,t,\xi}_{r}\Big)\Big\vert^{2}dr\bigg] \\
     	&+C\bigg(\hat{\mathbb{E}}_{s}\bigg[\sup\limits_{r \in [s,T]}\big\vert S(r,X^{\varepsilon,t,\xi}_{r})-S(r,X^{\varepsilon,t,\xi'}_{r})\big\vert^{2}\bigg]\bigg)^{\frac{1}{2}}(\Psi_{s,T})^{\frac{1}{2}}\\
     	\coloneqq &\,I_{1}+I_{2}+I_{3}(I_{4}+I_{5})^{\frac{1}{2}},
     	\end{aligned}
     \end{equation}
 and
     \begin{align}
 		\Psi_{s,T}
 		=&
 		\,\hat{\mathbb{E}}_{s}\bigg[\sup\limits_{r \in [s,T]}\hat{\mathbb{E}}_{r}\bigg[1+\big\vert \varphi(X^{\varepsilon,t,\xi}_{T})\big\vert^{2} +\int\nolimits_{s}^{T} \Big\vert  f\Big(\frac{u}{\varepsilon},X^{\varepsilon,t,\xi}_{u},0,0\Big)\Big\vert^{2}du\bigg]\bigg]\notag\\
 		&+\hat{\mathbb{E}}_{s}\bigg[\sup\limits_{r \in [s,T]}\hat{\mathbb{E}}_{r}\bigg[1+\big\vert \varphi(X^{\varepsilon,t,\xi'}_{T})\big\vert^{2} +\int\nolimits_{s}^{T} \Big\vert  f\Big(\frac{u}{\varepsilon},X^{\varepsilon,t,\xi'}_{u},0,0\Big)\Big\vert^{2}du\bigg]\bigg]\notag\\\label{(3.8)}
 		\coloneqq&\,I_{4}+I_{5},
 	    \end{align}
where the positive constant $C$ depends on $T,L,c$ and $\underline{\sigma}$.\\
As for $I_1$, from assumption (H2) and Cauchy-Schwarz inequality, we get
     \begin{equation}
     	\begin{aligned}
     	I_{1}&=
     	C\hat{\mathbb{E}}_{s}\Big[\big\vert \varphi(X^{\varepsilon,t,\xi}_{T})-\varphi(X^{\varepsilon,t,\xi'}_{T})\big\vert^{2}\Big]\\\notag
     	&\leq
     	C\hat{\mathbb{E}}_{s}\bigg[\sup\limits_{r \in [s,T]}\Big(1+\big\vert X^{\varepsilon,t,\xi}_{r} \big\vert^{2m}+\big\vert X^{\varepsilon,t,\xi'}_{r} \big\vert^{2m}\Big) 
     	\sup\limits_{r \in [s,T]}\big\vert X^{\varepsilon,t,\xi}_{r}-X^{\varepsilon,t,\xi'}_{r} \big\vert^{2} \bigg]\\
     	&\leq
     	C\bigg(\hat{\mathbb{E}}_{s}\bigg[\sup\limits_{r \in [s,T]}\Big(1+\big\vert X^{\varepsilon,t,\xi}_{r} \big\vert^{4m}+\big\vert X^{\varepsilon,t,\xi'}_{r} \big\vert^{4m}\Big)\bigg]\bigg)^{\frac{1}{2}}
     	\bigg(\hat{\mathbb{E}}_{s}\bigg[\sup\limits_{r \in [s,T]}\big\vert X^{\varepsilon,t,\xi}_{r}-X^{\varepsilon,t,\xi'}_{r} \big\vert^{4}\bigg]\bigg)^{\frac{1}{2}}.
     	\end{aligned}
     \end{equation}
Based on \eqref{(3.6)} and Lemma \ref{lemma 3.1}\,(iii), we conduct a similar analysis as \eqref{(3.5)}-\eqref{(3.6)} and deduce that
     \begin{equation}
     	\begin{aligned}
     	I_{1}&\leq
     	C\Big(1+\big\vert X^{\varepsilon,t,\xi}_{s} \big\vert^{2m}+\big\vert X^{\varepsilon,t,\xi'}_{s} \big\vert^{2m}\Big) 
     	\big\vert X^{\varepsilon,t,\xi}_{s}-X^{\varepsilon,t,\xi'}_{s} \big\vert^{2}. \notag
 		\end{aligned}
     \end{equation}
As for $I_{2}$, by assumption (H2) and a similar analysis as $I_{1}$, we get
     \begin{equation}
     	\begin{aligned}
		I_{2}
		&=
		C\hat{\mathbb{E}}_{s}\bigg[\int\nolimits_{s}^{T} \Big\vert  f\Big(\frac{r}{\varepsilon},X^{\varepsilon,t,\xi}_{r},Y^{\varepsilon,t,\xi}_{r},Z^{\varepsilon,t,\xi}_{r}\Big)- f\Big(\frac{r}{\varepsilon},X^{\varepsilon,t,\xi'}_{r},Y^{\varepsilon,t,\xi}_{r},Z^{\varepsilon,t,\xi}_{r}\Big)\Big\vert^{2}dr\bigg]\\
		&\leq
	    C\hat{\mathbb{E}}_{s}\bigg[\sup\limits_{r \in [s,T]}\Big(1+\big\vert X^{\varepsilon,t,\xi}_{r} \big\vert^{2m}+\big\vert X^{\varepsilon,t,\xi'}_{r} \big\vert^{2m}\Big) 
	    \sup\limits_{r \in [s,T]}\big\vert X^{\varepsilon,t,\xi}_{r}-X^{\varepsilon,t,\xi'}_{r} \big\vert^{2} \bigg]\\\notag
	    &\leq
	    C\Big(1+\big\vert X^{\varepsilon,t,\xi}_{s} \big\vert^{2m}+\big\vert X^{\varepsilon,t,\xi'}_{s} \big\vert^{2m}\Big) 
	    \big\vert X^{\varepsilon,t,\xi}_{s}-X^{\varepsilon,t,\xi'}_{s} \big\vert^{2}.
	    \end{aligned}
    \end{equation}
As for $I_{3}$, applying assumption (H3), Lemma \ref{lemma 3.1}\,(iii) and $X^{\varepsilon,t,\xi}_{r}=X^{\varepsilon,s,X^{\varepsilon,t,\xi}_{s}}_{r}$ for any $r \in [s,T]$, we have
\begin{align}
		I_{3}
		&=
		C\bigg(\hat{\mathbb{E}}_{s}\bigg[\sup\limits_{r \in [s,T]}\big\vert S(r,X^{\varepsilon,t,\xi}_{r})-S(r,X^{\varepsilon,t,\xi'}_{r})\big\vert^{2}\bigg]\bigg)^{\frac{1}{2}}\notag\\
		&\leq
		C\bigg(\hat{\mathbb{E}}_{s}\bigg[
		\sup\limits_{r \in [s,T]}\big\vert X^{\varepsilon,s,X^{\varepsilon,t,\xi}_{s}}_{r}-X^{\varepsilon,s,X^{\varepsilon,t,\xi'}_{s}}_{r} \big\vert^{2} \bigg]\bigg)^{\frac{1}{2}}\notag\\\notag
		&\leq
		C\big\vert X^{\varepsilon,t,\xi}_{s}-X^{\varepsilon,t,\xi'}_{s} \big\vert.
	\end{align}
By a similar analysis as \eqref{(3.6)}, we use \eqref{(3.3)}-\eqref{(3.4)} and Lemma \ref{(3.1)} to derive that 
\begin{equation}
   	\begin{aligned}
		I_{4}
		=&
		\,\hat{\mathbb{E}}_{s}\bigg[\sup\limits_{r \in [s,T]}\hat{\mathbb{E}}_{r}\bigg[1+\big\vert \varphi(X^{\varepsilon,t,\xi}_{T})\big\vert^{2} +\int\nolimits_{s}^{T} \Big\vert  f\Big(\frac{u}{\varepsilon},X^{\varepsilon,t,\xi}_{u},0,0\Big)\Big\vert^{2}du\bigg]\bigg]\\ \notag
		\leq
		&\,\hat{\mathbb{E}}_{s}\bigg[\sup\limits_{r \in [s,T]}   \bigg(\hat{\mathbb{E}}_{r}\bigg[C\Big(1+\big\vert X^{\varepsilon,t,\xi}_{T} \big\vert^{2m+2}\Big)\bigg]
		+C\hat{\mathbb{E}}_{r}\bigg[\sup\limits_{u \in [r,T]} \Big(1+\big\vert X^{\varepsilon,t,\xi}_{u} \big\vert^{2m+2}\Big)\bigg]\bigg)\bigg]\\
		&+ \hat{\mathbb{E}}_{s}\bigg[\sup\limits_{r \in [s,T]} \int\nolimits_{s}^{r}C\Big(1+\big\vert X^{\varepsilon,t,\xi}_{u} \big\vert^{2m+2}\Big)du \bigg]\\
	   \leq
		&\,\hat{\mathbb{E}}_{s}\bigg[\sup\limits_{r \in [s,T]}C\Big(1+\big\vert X^{\varepsilon,t,\xi}_{r} \big\vert^{2m+2}\Big)\bigg]
		+C\hat{\mathbb{E}}_{s}\bigg[\sup\limits_{r \in [s,T]}\Big(1+\big\vert X^{\varepsilon,t,\xi}_{r} \big\vert^{2m+2}\Big)\bigg]\\
		\leq
		&\,C\Big(1+\big\vert X^{\varepsilon,t,\xi}_{s} \big\vert^{2m+2}\Big).
	\end{aligned}
\end{equation}
Likewise, we deduce that
   \begin{align}
		I_{5}
		&\leq
		C\Big(1+\big\vert X^{\varepsilon,t,\xi'}_{s} \big\vert^{2m+2}\Big).\notag
	\end{align}
Thus, we substitute the above results into \eqref{(3.7)} to yield the desired result.\\
 ${}\hspace{1.5em}$(iii) 
In terms of Lemma \ref{lemma 5.4} in appendix, it is clear that
     \begin{equation}
     	\begin{aligned}
     	\hat{\mathbb{E}}_{t}\bigg[\int\nolimits_t^{T}\big\vert Z^{\varepsilon,t,\xi}_{r}\big\vert^2dr\bigg] 
     	\leq 
     	&\,C\bigg(\hat{\mathbb{E}}_{t}\bigg[\sup\limits_{r \in [t,T]}\big\vert Y^{\varepsilon,t,\xi}_{r} \big\vert^{2}\bigg]\bigg)^{\frac{1}{2}}
     	\bigg(\hat{\mathbb{E}}_{t}\bigg[\bigg(\int\nolimits_{t}^{T}\Big\vert  f\Big(\frac{r}{\varepsilon},X^{\varepsilon,t,\xi}_{r},0,0\Big) \Big\vert dr\bigg)^{2}\bigg]\bigg)^{\frac{1}{2}}\\ \notag
     	&+C\hat{\mathbb{E}}_{t}\bigg[\sup\limits_{r \in [t,T]}\big\vert Y^{\varepsilon,t,\xi}_{r} \big\vert^{2}\bigg],
     	\end{aligned}
     \end{equation}
where the positive constant $C$ depends on the arguments $T,L,c,\underline{\sigma}$. Combining (i) and Lemma \ref{lemma 3.1}\,(i), we derive that
     \begin{align}\label{(3.9)}
     	\hat{\mathbb{E}}_{t}\bigg[\sup\limits_{r \in [t,T]}\big\vert Y^{\varepsilon,t,\xi}_{r} \big\vert^{2}\bigg] 
     	\leq 
     	C(T,L,c,\underline{\sigma})\hat{\mathbb{E}}_{t}\bigg[\sup\limits_{r \in [t,T]}\big(1+\big\vert X^{\varepsilon,t,\xi}_{r} \big\vert^{2m+2}\big)\bigg]
     	\leq 
     	C(T,L,c,\underline{\sigma})\big(1+\vert \xi \vert^{2m+2}\big).
     \end{align}
Using the H\"{o}lder's inequality, \eqref{(3.4)} and \eqref{(3.9)}, we have
     	\begin{align}
     	\hat{\mathbb{E}}_{t}\bigg[\displaystyle\int\nolimits_t^{T}\big\vert Z^{\varepsilon,t,\xi}_{r}\big\vert^2dr\bigg] 
 	    &\leq
 	    C\big(1+\vert\xi\vert^{2m+2}\big)
 	    +C\big(1+\vert\xi \vert^{m+1}\big)
 	    \bigg(\hat{\mathbb{E}}_{t}\bigg[\int\nolimits_{t}^{T} \Big\vert  f\Big(\frac{r}{\varepsilon},X^{\varepsilon,t,\xi}_{r},0,0\Big) \Big\vert^{2} dr\bigg]\bigg)^{\frac{1}{2}}\notag\\ \notag
 	    &\leq
 	    C\big(1+\vert \xi \vert^{2m+2}\big)
 	    +C\big(1+\vert \xi \vert^{m+1}\big)
 	    \bigg(\hat{\mathbb{E}}_{t}\bigg[\int\nolimits_{t}^{T} C(L)\big(1+\big\vert X^{\varepsilon,t,\xi}_{r} \big\vert^{2m+2}\big) dr\bigg]\bigg)^{\frac{1}{2}}\\ \notag
 	    &\leq
 	    C\big(1+\vert \xi \vert^{2m+2}\big)
 	    +C\big(1+\vert \xi \vert^{m+1}\big)
 	    \bigg(\hat{\mathbb{E}}_{t}\bigg[\sup\limits_{r \in [t,T]} \big(1+\big\vert X^{\varepsilon,t,\xi}_{r} \big\vert^{2m+2}\big) \bigg]\bigg)^{\frac{1}{2}}\\ \notag
 	    &\leq C\big(1+\vert \xi \vert^{2m+2}\big),
 	 \end{align}
where the positive constant $C$ depends on $T,L,c$ and $\underline{\sigma}$. The proof is completed.
\end{proof}\par
Next, we give a probabilistic representation for the solution of an obstacle problem for a fully
nonlinear parabolic PDE by using the reflected $G$-BSDE with a small parameter $\varepsilon \in (0,1)$ in \eqref{(3.1)}.
In fact, for the linear case, martingale problem approach can be utilized to identify whether the limit process is a solution to the given equation. However, the nonlinear martingale problem still needs further study, it can not be utilized to verify the conclusion we want.\par
In order to deal with this difficulty, we adopt the nonlinear Feynman-Kac formula to establish the relation between reflected $G$-FBSDE  \eqref{(3.1)} and the corresponding obstacle problem for fully nonlinear parabolic PDE.
To this end, we define the function $u^{\varepsilon}$ satisfying
    \begin{equation}\label{(3.10)}
	u^{\varepsilon}(t,x) \coloneqq Y^{\varepsilon,t,x}_{t},\quad\forall(t,x) \in [0,T] \times \mathbb{R}^{n}.
	\end{equation}
In fact, it is important to note that $u^{\varepsilon}(t,x)$ is a deterministic function. And also, the following lemma is efficient to cope with the limit distribution of $G$-FBSDE \eqref{(3.1)} through the solution $u^{\varepsilon}$ of the following fully nonlinear PDE. More relevant details can be obtained in Peng \cite{Peng3} and Li et al. \cite{LPS}.
    \begin{lemma}\label{lemma 3.3}
    	Suppose that $\mathrm{(H1)}$-$\mathrm{(H3)}$ or $\mathrm{(H1)}$-$\mathrm{(H2)}\mathrm{,}$$\,\mathrm{(H4)}$ hold. Then $u^{\varepsilon}$ defined by \eqref{(3.10)} is the unique visco- sity solution of the following obstacle problem:
    	\begin{equation}\label{(3.11)}
    			\left\{\begin{matrix} 
    			\begin{aligned}     		
    				&\min\big(-\partial_{t}u^{\varepsilon}(t,x)-F\big(\tfrac{t}{\varepsilon},x,u^{\varepsilon},D_{x}u^{\varepsilon},D^{2}_{x}u^{\varepsilon}\big),\,u^{\varepsilon}(t,x)-S(t,x)\big)=0,\quad\hspace{0.5em} (t,x) \in (0,T)\times \mathbb{R}^{n},\\ 
    				&\,u^{\varepsilon}(T,x)=\varphi(x),\hspace{23.6em} x\in\mathbb{R}^{n},	
    		\end{aligned}
    	\end{matrix}
    		\right.
    	\end{equation}
where for $(t,x,v,p,A)\in \mathbb{R}^{+} \times \mathbb{R}^{n}\times \mathbb{R}\times \mathbb{R}^{1\times n} \times \mathbb{S}(n)$$\mathrm{,}$
         \begin{equation}\label{(3.12)}
         	\begin{aligned}
         	&F(t,x,v,p,A)=G(H(t,x,v,p,A))+pb(t,x)+f(t,x,v,p\sigma(t,x)),\\
	        &H_{ij}(t,x,v,p,A)=(\sigma(t,x)^\top A\sigma(t,x))_{ij}+2ph_{ij}(t,x)+2g_{ij}(t,x,v,p\sigma(t,x)).
	        \end{aligned}
         \end{equation}
Moreover$\mathrm{,}$ we also get
\begin{align}
        	u^{\varepsilon}(s,X^{\varepsilon,t,x}_{s})=Y^{\varepsilon,t,x}_{s},\ \forall\,0 \leq t \leq s \leq T,\,x \in \mathbb{R}^{n}.\notag
\end{align}
\end{lemma}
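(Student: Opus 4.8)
The statement is essentially a restatement of the nonlinear Feynman--Kac formula for reflected $G$-BSDEs from Li et al.\ \cite{LPS}, adapted to the present setting where the coefficients carry the frozen time argument $t/\varepsilon$. Since $\varepsilon\in(0,1)$ is fixed throughout, the substitution $(t,x,y,z)\mapsto(t/\varepsilon,x,y,z)$ merely relabels the drift/diffusion/generator coefficients, and one checks that assumptions (H1)--(H4) are preserved under this relabelling (the Lipschitz constant $L$ and the polynomial growth degree $m$ are unchanged, since the bounds are uniform in the first variable $s\in\mathbb{R}^{+}$). Thus the plan is to verify that the hypotheses of the corresponding theorem in \cite{LPS} are met for the relabelled coefficients, invoke that theorem to obtain existence and uniqueness of a viscosity solution together with the flow identity, and then match the notation, in particular identifying the operator $F$ in \eqref{(3.12)} with the one appearing there.

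The proof will proceed in the following steps. First I would recall that, by Lemma~\ref{lemma 5.1} in the appendix, under (H1)--(H3) or (H1)--(H2),(H4) the reflected $G$-BSDE in \eqref{(3.1)} admits a unique solution $(Y^{\varepsilon,t,\xi},Z^{\varepsilon,t,\xi},A^{\varepsilon,t,\xi})\in\mathcal{S}^{2}_{G}(t,T)$ for every $(t,\xi)$, so that $u^{\varepsilon}(t,x)=Y^{\varepsilon,t,x}_{t}$ is well defined. Second I would establish that $u^{\varepsilon}$ is a deterministic function and is continuous: the deterministic character follows because the coefficients are deterministic and the driving data at time $t$ starts from the constant $x$, while continuity (indeed local Hölder continuity) in $(t,x)$ follows from the a priori estimates of Lemmas~\ref{lemma 3.1} and \ref{lemma 3.2} (boundedness of $Y^{\varepsilon,t,x}_{t}$ by $C(1+|x|^{m+1})$ and Lipschitz-type control of $|Y^{\varepsilon,t,\xi}_s-Y^{\varepsilon,t,\xi'}_s|$ in the initial data) combined with the standard estimate $\hat{\mathbb{E}}_t[\sup_{r\in[t,s]}|X^{\varepsilon,t,\xi}_r-\xi|^p]\le C(1+|\xi|^p)|t-s|^{p/2}$. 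Third I would prove the dynamic-programming / flow identity $u^{\varepsilon}(s,X^{\varepsilon,t,x}_{s})=Y^{\varepsilon,t,x}_{s}$ for $t\le s\le T$, which rests on the uniqueness of the solution to \eqref{(3.1)} together with the Markov property $X^{\varepsilon,t,x}_{r}=X^{\varepsilon,s,X^{\varepsilon,t,x}_{s}}_{r}$ (already used in the proof of Lemma~\ref{lemma 3.2}): restarting the reflected $G$-BSDE at time $s$ from the state $X^{\varepsilon,t,x}_{s}$ yields a solution on $[s,T]$ which, by uniqueness, must coincide with $(Y^{\varepsilon,t,x},Z^{\varepsilon,t,x},A^{\varepsilon,t,x})$ on that interval, and evaluating at time $s$ gives the claim. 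Fourth, using this flow identity I would verify that $u^{\varepsilon}$ is a viscosity sub- and supersolution of the obstacle problem \eqref{(3.11)}: here one tests $u^{\varepsilon}$ against a smooth $\phi$ touching it from above (resp.\ below) at an interior point, applies $G$-It\^o's formula to $\phi(s,X^{\varepsilon,t,x}_s)$, uses the reflected $G$-BSDE relation and the Skorohod-type condition (iii) (the minimality of $A^{\varepsilon,t,x}$, i.e.\ that $-\int(Y-S(\cdot,X))\,dA$ is a nonincreasing $G$-martingale) to extract the obstacle constraint $u^{\varepsilon}\ge S$, and the nonlinearity of $\hat{\mathbb{E}}$ to recover the $G$ term in $F$ as in \eqref{(3.12)}. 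Finally, uniqueness of the viscosity solution among functions of at most polynomial growth follows from the comparison principle for this class of obstacle problems, again quoting \cite{LPS} (or Peng \cite{Peng3}).

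The main obstacle is the viscosity-solution verification step, and within it the handling of the lower obstacle. In the $G$-framework one cannot argue pathwise, so the extraction of the subsolution property at a contact point must be done through the nonlinear conditional expectation $\hat{\mathbb{E}}_{\cdot}$; the delicate point is showing that, at a point where $u^{\varepsilon}(t,x)>S(t,x)$, the increasing process $A^{\varepsilon,t,x}$ does not charge a neighbourhood of $t$ (so that the equation holds with equality there), which is precisely where condition (iii) on $A^{\varepsilon,t,x}$ — the nonincreasing $G$-martingale property of $-\int(Y-S(\cdot,X))\,dA$ — must be used carefully. In practice, however, all of this is already carried out in \cite{LPS}, so the honest content of the proof here is the reduction: checking that freezing the time variable as $t/\varepsilon$ leaves assumptions (H1)--(H4) intact and that the operator $F$ obtained from the frozen coefficients is exactly \eqref{(3.12)} with $t$ replaced by $t/\varepsilon$. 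I would therefore keep the argument short, stating the verification of hypotheses explicitly and then invoking the Feynman--Kac theorem of \cite{LPS} for the conclusion.
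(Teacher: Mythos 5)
Your proposal matches the paper's treatment: the paper gives no proof of Lemma \ref{lemma 3.3} at all, simply attributing it to the nonlinear Feynman--Kac formula for reflected $G$-BSDEs in Li et al.\ \cite{LPS} (and Peng \cite{Peng3}), which is exactly the reduction you make, with your sketch of the flow identity, viscosity verification and comparison argument being the standard content of those references rather than anything the paper redoes.
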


\section{The averaging principle}\label{section 4}

\noindent In this section, our aim is to establish the averaging principle regarding the dynamical system \eqref{(3.1)} perturbed by a small parameter $\epsilon \in (0,1)$ by using the nonlinear stochastic analysis technique and viscosity solution method. Thus, to ensure the subsequent discussions on the existence and uniqueness of viscosity solution to go on successfully, we will present the following assumption.\\
\textbf{($\mathbf{H5}$)} For each $(x,v,p,A) \in \mathbb{R}^{n} \times \mathbb{R} \times \mathbb{R}^{1 \times n} \times \mathbb{S}(n) $, the limit	\vspace{3pt}
    \begin{equation}
    	\bar{F}(x,v,p,A)
    	\coloneqq
    	\lim\limits_{s\rightarrow \infty}\frac{1}{s}\int\nolimits_{0}^{s}[pb(r,x)+f(r,x,v,p\sigma(r,x))+G(H(r,x,v,p,A))]dr\notag	\vspace{3pt}
	\end{equation}
${}\hspace{2.75em}$exists and is finite, where the function $H:\,\mathbb{R}^{+} \times \mathbb{R}^{n} \times \mathbb{R} \times \mathbb{R}^{1 \times n} \times \mathbb{S}(n) \rightarrow \mathbb{S}(d)$ is defined in \eqref{(3.12)}.\par
Next, we introduce an averaged PDE with the coefficient $\bar{F}$ defined above:	\vspace{2pt}
\begin{equation}
\begin{aligned}  
		\left\{\begin{matrix}
			\begin{split}
			&\min\big(-\partial_{t}\bar{u}(t,x)-\bar{F}(x,\bar{u},D_{x}\bar{u},D^{2}_{x}\bar{u}),\,\bar{u}(t,x)-S(t,x)\big)=0,\quad (t,x) \in (0,T)\times \mathbb{R}^{n},\\ 		\label{(4.1)}		
			&\,\bar{u}(T,x)=\varphi(x),\hspace{20.72em} x\in\mathbb{R}^{n}.
			\end{split}
		\end{matrix}\right.
\end{aligned}
\end{equation}
\vspace{-11pt}
\begin{remark}\label{remark 4.1}
	\rm{In} fact, by the probabilistic representation for the solution of an obstacle problem for a fully nonlinear parabolic PDE, if there exists some functions $\bar{b}(x),\,\bar{\sigma}(x),\,\bar{h}_{ij}(x),\,\bar{f}(x,y,z),\,\bar{g}_{ij}(x,y,z)$, such that the coefficient $\bar{F}$ has the following form:
	\begin{equation}\label{(4.2)}
		\begin{aligned}
			&\bar{F}(x,v,p,A)=G(\bar{H}(x,v,p,A))+p\bar{b}(x)+\bar{f}(x,v,p\bar{\sigma}(x)),\\
			&\bar{H}_{ij}(x,v,p,A)=(\bar{\sigma}(x)^\top A\bar{\sigma}(x))_{ij}+2p\bar{h}_{ij}(x)+2\bar{g}_{ij}(x,v,p\bar{\sigma}(x)),
		\end{aligned}
	\end{equation}
for $(x,v,p,A) \in \mathbb{R}^{n} \times \mathbb{R} \times \mathbb{R}^{1 \times n} \times \mathbb{S}(n)$, then from Lemma \ref{lemma 3.3}, we derive that $\bar{u}(t,x)=\bar{Y}^{t,x}_{t}$$\rm{,}$ where $\bar{Y}^{t,x}$ satisfies the following averaged $G$-FBSDE with the obstacle process $\{S(s,\bar{X}^{t,x}_{s})\}_{s \in [t,T]}$$\rm{:}$
\begin{equation}\label{(4.3)}
	\left\{
	\begin{matrix}
		 \bar{X}^{t,x}_{s}=x+\displaystyle\int_t^{s} \bar{b}(\bar{X}^{t,x}_{r})dr+\sum\limits_{i,j=1}^d\int_t^{s} \bar{h}_{ij}(\bar{X}^{t,x}_{r})d\langle B^{i},B^{j}\rangle_{r}+\int_t^{s} \bar{\sigma}(\bar{X}^{t,x}_{r})dB_{r},\hfill\hfill\\
		\begin{split}
			\bar{Y}^{t,x}_{s}
			=&\,\varphi(\bar{X}^{t,x}_{T})+\int_s^{T} \bar{f}\Big(\bar{X}^{t,x}_{r},\bar{Y}^{t,x}_{r},\bar{Z}^{t,x}_{r}\Big)dr-\int_s^{T}\bar{Z}^{t,x}_{r} dB_{r}+(\bar{A}^{t,x}_{T}-\bar{A}^{t,x}_{s})\\
			&+\sum_{i,j=1}^d\int_s^{T} \bar{g}_{ij}\Big(\bar{X}^{t,x}_{r},\bar{Y}^{t,x}_{r},\bar{Z}^{t,x}_{r}\Big)d\langle B^{i},B^{j}\rangle_{r}.\hfill\hfill
		\end{split}
	\end{matrix} \right. 
\end{equation}
\end{remark}\par	
Then we need to consider solutions of the above PDE \eqref{(4.1)} in the viscosity sense. To conclude the proper- ties regarding the solution of PDE, the best candidate to define
the notion of viscosity solution is by using the language of sub- and super-jets (see \cite{CIP}). So we try to introduce the following definitions.
\begin{definition}\label{definition 4.2}
	\rm{L}et $u \in C((0,T) \times \mathbb{R}^{n})$ and $(t,x) \in (0,T) \times \mathbb{R}^{n}$. We denote by $\mathcal{P}^{2,+}u(t,x)$ $\mathrm{(}$the ``parabolic superjet'' of $u$ at $(t,x)$) the set of triples $(p,q,X) \in \mathbb{R} \times \mathbb{R}^{1 \times n} \times \mathbb{S}(n) $ satisfying
\begin{align}
	u(s,y) \leq u(t,x)+p(s-t)+\langle q,y-x\rangle +\frac{1}{2}\langle X(y-x),y-x \rangle+o(\vert s-t \vert +\vert y-x \vert^{2}).\notag
\end{align}
Similarly, we define $\mathcal{P}^{2,-}u(t,x)$ $\mathrm{(}$the ``parabolic subjet'' of $u$ at $(t,x)$) by $\mathcal{P}^{2,-}u(t,x) \coloneqq -\mathcal{P}^{2,+}(-u)(t,x) $.
\end{definition}\par
Then, we present the definition of the viscosity solution of the obstacle problem \eqref{(4.1)}.
\begin{definition}\label{definition 4.3}
	\rm{I}t can be said that $u \in C([0,T]\times \mathbb{R}^{n})$ is a viscosity subsolution of \eqref{(4.1)} if $u(T,x) \leq \varphi(x),\,x \in \mathbb{R}^{n}$, and at any point $(t,x) \in (0,T) \times \mathbb{R}^{n}$, for any $(p,q,X) \in \mathcal{P}^{2,+}u(t,x)$,
	\begin{equation}
		\min(u(t,x)-S(t,x),-p-\bar{F}(x,u(t,x),q,X)) \leq 0.\notag
	\end{equation}
It can be said that $u \in C([0,T] \times \mathbb{R}^{n})$ is a viscosity supersolution of \eqref{(4.1)} if $u(T,x) \geq \varphi(x),\,x \in \mathbb{R}^{n}$, and at any point $(t,x) \in (0,T) \times \mathbb{R}^{n}$, for any $(p,q,X) \in \mathcal{P}^{2,-}u(t,x)$,
\begin{equation}
	\min(u(t,x)-S(t,x),-p-\bar{F}(x,u(t,x),q,X)) \geq 0.\notag
\end{equation}
$u \in C([0,T] \times \mathbb{R}^{n})$ is said to be a viscosity solution of \eqref{(4.1)} if it is both a viscosity sub- and supersolution.
\end{definition}\par
In order to better draw out the subsequent conclusions, we first give some priori estimates.

\begin{lemma}\label{lemma 4.4}
	Suppose that $\mathrm{(H1)}$-$\mathrm{(H3)}$ hold. Then for any $t,\,t_{1},\,t_{2} \in [0,T]$ and $x,\,x_{1},\,x_{2} \in \mathbb{R}^{n}\mathrm{,}$ we get\\
	${}\hspace{0.66em}$$\bm{(\mathbf{i})}$
	$\vert u^{\varepsilon}(t,x_{1})-u^{\varepsilon}(t,x_{2})\vert^{2} 
	\leq C[(1+\vert x_{1} \vert^{2m}+\vert x_{2} \vert^{2m}) \vert x_{1}-x_{2} \vert^{2}+(1+\vert x_{1} \vert^{m+1}+\vert x_{2} \vert^{m+1}) \vert x_{1}-x_{2} \vert]\mathbb{;}$\\
	${}\hspace{0.33em}$$\bm{(\mathbf{ii})}$
	$\vert u^{\varepsilon}(t_{2},x)-u^{\varepsilon}(t_{1},x)\vert 
	\leq C(1+\vert x \vert^{m+1}) \vert t_{2}-t_{1} \vert^{\frac{1}{2}}+C(1+\vert x \vert^{\frac{m+2}{2}}) \vert t_{2}-t_{1} \vert^{\frac{1}{4}}\mathbb{;}$\\
	$\bm{(\mathbf{iii})}$
	$\vert u^{\varepsilon}(t,x)\vert\leq C(1+\vert x\vert^{m+1})\mathrm{;}$\\
where the positive constant $C$ depends on $T,L,c$ and $\underline{\sigma}$.
\end{lemma}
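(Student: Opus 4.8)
The three estimates are essentially consequences of the identification $u^{\varepsilon}(t,x)=Y^{\varepsilon,t,x}_{t}$ from \eqref{(3.10)} together with the pathwise estimates of \cref{lemma 3.2} and the flow estimates of \cref{lemma 3.1}, so the plan is to transfer each bound on $Y^{\varepsilon}$ into a bound on $u^{\varepsilon}$ by evaluating at the deterministic starting point. For $\bm{(\mathbf{iii})}$ I would simply take $\xi=x$ in \cref{lemma 3.2}\,(i) and use that at time $s=t$ one has $X^{\varepsilon,t,x}_{t}=x$, which gives $\vert u^{\varepsilon}(t,x)\vert=\vert Y^{\varepsilon,t,x}_{t}\vert\le C(1+\vert x\vert^{m+1})$ directly. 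For $\bm{(\mathbf{i})}$ I would start from \cref{lemma 3.2}\,(ii) with $\xi=x_{1}$, $\xi'=x_{2}$, again at $s=t$; since $X^{\varepsilon,t,x_{1}}_{t}=x_{1}$ and $X^{\varepsilon,t,x_{2}}_{t}=x_{2}$, the right-hand side collapses to the claimed expression $C[(1+\vert x_{1}\vert^{2m}+\vert x_{2}\vert^{2m})\vert x_{1}-x_{2}\vert^{2}+(1+\vert x_{1}\vert^{m+1}+\vert x_{2}\vert^{m+1})\vert x_{1}-x_{2}\vert]$, and no extra work is needed because all conditional expectations become deterministic at the starting time.

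The only genuinely new piece is the time regularity $\bm{(\mathbf{ii})}$, where I expect the main obstacle to lie. Assuming $t_{1}<t_{2}$, I would use the flow property of the forward equation and the nonlinear Feynman--Kac identity $u^{\varepsilon}(s,X^{\varepsilon,t_{1},x}_{s})=Y^{\varepsilon,t_{1},x}_{s}$ from \cref{lemma 3.3}. Evaluating at $s=t_{2}$ gives $u^{\varepsilon}(t_{1},x)=Y^{\varepsilon,t_{1},x}_{t_{1}}$ on one hand, and on the other hand $u^{\varepsilon}(t_{2},X^{\varepsilon,t_{1},x}_{t_{2}})=Y^{\varepsilon,t_{1},x}_{t_{2}}$. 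Thus
\begin{align}
	\vert u^{\varepsilon}(t_{2},x)-u^{\varepsilon}(t_{1},x)\vert
	&\le \vert u^{\varepsilon}(t_{2},x)-u^{\varepsilon}(t_{2},X^{\varepsilon,t_{1},x}_{t_{2}})\vert
	+\vert Y^{\varepsilon,t_{1},x}_{t_{2}}-Y^{\varepsilon,t_{1},x}_{t_{1}}\vert.\notag
\end{align}
The first term I would control by part $\bm{(\mathbf{i})}$ (applied with $x_{1}=x$, $x_{2}=X^{\varepsilon,t_{1},x}_{t_{2}}$) combined with \cref{lemma 3.1}\,(ii), which yields $\hat{\mathbb{E}}_{t_{1}}[\vert X^{\varepsilon,t_{1},x}_{t_{2}}-x\vert^{p}]\le C(1+\vert x\vert^{p})\vert t_{2}-t_{1}\vert^{p/2}$; taking $p=2$ and $p=1$ and using $\bm{(\mathbf{i})}$ together with Lemma \ref{lemma 3.1}\,(i) to bound the moment prefactors produces the two terms $C(1+\vert x\vert^{m+1})\vert t_{2}-t_{1}\vert^{1/2}$ and $C(1+\vert x\vert^{(m+2)/2})\vert t_{2}-t_{1}\vert^{1/4}$ (the quarter power coming from the square root of $\vert t_{2}-t_{1}\vert^{1/2}$ in the linear-in-increment term). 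The second term $\vert Y^{\varepsilon,t_{1},x}_{t_{2}}-Y^{\varepsilon,t_{1},x}_{t_{1}}\vert$ I would estimate from the backward equation in \eqref{(3.1)} on $[t_{1},t_{2}]$: writing out $Y_{t_{1}}-Y_{t_{2}}$ as the sum of the Lebesgue drift, the $d\langle B\rangle$ term, the $G$-stochastic integral, and the increment of $A^{\varepsilon}$, and using the growth bounds on $f,g_{ij}$ (via (H2)), \cref{lemma 3.2}, the estimate on $\hat{\mathbb{E}}_{t}[\int_{t}^{T}\vert Z^{\varepsilon}_{r}\vert^{2}dr]$ from \cref{lemma 3.2}\,(iii), and the a priori bound on $\hat{\mathbb{E}}[\vert A^{\varepsilon}_{t_{2}}-A^{\varepsilon}_{t_{1}}\vert^{2}]$ available from the appendix estimates; each contributes at most $C(1+\vert x\vert^{m+1})\vert t_{2}-t_{1}\vert^{1/2}$.

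The delicate point in part $\bm{(\mathbf{ii})}$ is handling the nondecreasing process $A^{\varepsilon,t_{1},x}$: unlike the other terms, $A^{\varepsilon}_{t_{2}}-A^{\varepsilon}_{t_{1}}$ is not obviously $O(\vert t_{2}-t_{1}\vert^{1/2})$ just from $A^{\varepsilon}\in S^{2}_{G}$, so I would need the finer local estimate on the increment of the reflection process (the Skorohod-type a priori bound from \cref{lemma 5.4} or its analogue in the appendix), which controls $\hat{\mathbb{E}}_{t_{1}}[\vert A^{\varepsilon}_{t_{2}}-A^{\varepsilon}_{t_{1}}\vert^{2}]$ in terms of the data on $[t_{1},t_{2}]$ and hence by $C(1+\vert x\vert^{2m+2})\vert t_{2}-t_{1}\vert$. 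Once that is in hand, collecting all the pieces and noting that $u^{\varepsilon}(t_{1},x)$ is deterministic (so the conditional expectations can be dropped after taking $\hat{\mathbb{E}}_{t_{1}}$) gives the stated bound. I do not expect to need any new idea beyond a careful bookkeeping of which power of $\vert t_{2}-t_{1}\vert$ each term carries; the $\vert t_{2}-t_{1}\vert^{1/4}$ exponent is genuinely forced by the polynomial (rather than Lipschitz) dependence of the coefficients on $x$, exactly as in the spatial estimate $\bm{(\mathbf{i})}$ where both a quadratic and a linear term in $\vert x_{1}-x_{2}\vert$ appear.
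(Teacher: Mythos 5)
Parts \textbf{(i)} and \textbf{(iii)} of your proposal coincide with the paper: both are read off from Lemma \ref{lemma 3.2}\,(i)--(ii) by taking $\xi=x_1$, $\xi'=x_2$ and $s=t$, exactly as the paper does in one line.

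For part \textbf{(ii)} you take a genuinely different route (flow property plus the Feynman--Kac identity $u^{\varepsilon}(t_2,X^{\varepsilon,t_1,x}_{t_2})=Y^{\varepsilon,t_1,x}_{t_2}$, then a direct increment estimate of the backward equation on $[t_1,t_2]$), and this is where there is a real gap: your key claim $\hat{\mathbb{E}}_{t_1}\big[\vert A^{\varepsilon,t_1,x}_{t_2}-A^{\varepsilon,t_1,x}_{t_1}\vert^{2}\big]\leq C(1+\vert x\vert^{2m+2})\vert t_2-t_1\vert$ does not follow from Lemma \ref{lemma 5.4} or any appendix estimate. Applying Lemma \ref{lemma 5.4} on $[t_1,t_2]$ only gives a bound by $C\hat{\mathbb{E}}_{t_1}\big[\sup_{s\in[t_1,t_2]}\vert Y_s\vert^{2}\big]+C\hat{\mathbb{E}}_{t_1}\big[\big(\int_{t_1}^{t_2}\vert f(s,0,0)\vert ds\big)^{2}\big]$, and the first term stays of order $1+\vert x\vert^{2m+2}$ as $t_2\downarrow t_1$; it does not shrink with the interval. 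Worse, under (H3) the obstacle $S$ has no quantitative regularity in time (only Lipschitz in $x$ and continuity in $t$), and a short-time rate for the increment of the reflection process is simply not available in that generality --- this is precisely why the paper's Lemma \ref{lemma 4.5}\,(ii), proved under the stronger (H4), carries the extra term $\vert S(t,x)-S(s,x)\vert$. Note also that the term you propose to estimate, $\vert Y^{\varepsilon,t_1,x}_{t_2}-Y^{\varepsilon,t_1,x}_{t_1}\vert=\vert u^{\varepsilon}(t_2,X^{\varepsilon,t_1,x}_{t_2})-u^{\varepsilon}(t_1,x)\vert$, is essentially the quantity to be proved, so without genuinely new control of $A$ the argument is circular.

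The paper's proof is engineered exactly to avoid this point: following Lemma 6.6 of \cite{LPS}, it extends both reflected systems to the common interval $[0,T]$ (freezing $\widetilde X\equiv x$, $\widetilde Y\equiv Y^{\varepsilon,t,x}_t$, $\widetilde Z\equiv 0$, $\widetilde A\equiv 0$ before time $t$ and inserting indicators $\mathbb{I}_{[t,T]}$ in the coefficients), identifies $u^{\varepsilon}(t_i,x)=\widetilde Y^{\varepsilon,t_i,x}_{0}$, and then applies the stability estimate of Lemma \ref{lemma 5.5} at time $0$. In that comparison the obstacle enters only through $\sup_{s}\vert \widetilde S(s,\widetilde X^{\varepsilon,t_1,x}_s)-\widetilde S(s,\widetilde X^{\varepsilon,t_2,x}_s)\vert\leq L\sup_{s}\vert \widetilde X^{\varepsilon,t_1,x}_s-\widetilde X^{\varepsilon,t_2,x}_s\vert$, i.e.\ only the spatial Lipschitz property of $S$ and the forward estimates of Lemma \ref{lemma 3.1} are used; no time-increment of $Y$ or of $A$ is ever estimated, and the $\vert t_2-t_1\vert^{1/4}$ arises from the square root of the $J_3(J_4+J_5)^{1/2}$ term. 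Your first term (bounding $\hat{\mathbb{E}}[\vert u^{\varepsilon}(t_2,x)-u^{\varepsilon}(t_2,X^{\varepsilon,t_1,x}_{t_2})\vert]$ via part (i) and Lemma \ref{lemma 3.1}\,(ii)) is fine, but to repair the argument you would either have to strengthen the hypothesis to (H4) (so that the obstacle is a $G$-It\^{o} process and the reflection increment can be controlled) or switch to the paper's extension-plus-stability scheme.
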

\begin{proof}
	The first and third assertion can be directly derived from (i) and (ii) in Lemma \ref{lemma 3.2}. So we only need to prove the second statement. Inspired by Lemma 6.6 in \cite{LPS}, we fix the point $(t,x)\in [0,T] \times \mathbb{R}^{n}$ and utilize the reflected $G$-FBSDE defined in \eqref{(3.1)} to establish a new $G$-FBSDE with a new obstacle process, which plays an important role in carrying out the subsequent proof. To be more precise, this new $G$-FBSDE is a version which extends the time interval $[t,T]$ of the original stochastic equations to the time interval $[0,T]$.\par
	The main proof will proceed in the following three steps.\par
	$\mathbf{Step\ 1.}$ Firstly, we define $\widetilde{X}^{\varepsilon,t,x}_{s}=x,\,\widetilde{Y}^{\varepsilon,t,x}_{s}=Y^{\varepsilon,t,x}_{t},\,\widetilde{Z}^{\varepsilon,t,x}_{s}=0$ and $\widetilde{A}^{\varepsilon,t,x}_{s}=0$ for any $s\in [0,t]$. Then, we formulate the function $\widetilde{S}:[0,T] \times \mathbb{R}^{n} \rightarrow \mathbb{R}$, such that for any $x'\in \mathbb{R}^{n}$, 
\begin{equation}\label{(4.4)}
	\widetilde{S}(r,x')=\left\{
	\begin{matrix}
		{S}(t,x'),\quad r \in [0,t],\hfill\hfill\\
		\begin{split}
			{S}(r,x'),\quad r \in [t,T]. \hfill\hfill
		\end{split}
	\end{matrix} \right. 
\end{equation}\par	
Owing to assumptions (H2)-(H3) and the structure of $S(\cdot,\cdot)$, we can demonstrate that the function $\widetilde{S}(\cdot,\cdot)$ also satisfies assumption (H3). Namely, for any $x'\in \mathbb{R}^{n}$, $\widetilde{S}(r,x')$ satisfies the Lipschitz condition with respect to $x'$, $\widetilde{S} \leq c$ and $\varphi(x') \geq \widetilde{S}(T,x')=S(T,x')$. Furthermore, it is easy to check that $\widetilde{S}(r,x')$ is continuous with respect to the argument $r$ by using the continuity property of $S(\cdot,\cdot)$. \par
$\mathbf{Step\ 2.}$ We establish the following reflected $G$-FBSDE:
\begin{equation}\label{(4.5)}
	\left\{
	\begin{aligned}
		 \widetilde{X}^{\varepsilon,t,x}_{s}
		 =&\,\widetilde{X}^{\varepsilon,t,x}_{0}+\int_0^{s} b\Big(\frac{r}{\varepsilon},\widetilde{X}^{\varepsilon,t,x}_{r}\Big)\mathbb{I}_{[t,T]}(r)dr+\int_0^{s} \sigma\Big(\frac{r}{\varepsilon},\widetilde{X}^{\varepsilon,t,x}_{r}\Big)\mathbb{I}_{[t,T]}(r)dB_{r}\\
		&+\sum\limits_{i,j=1}^d\int_0^{s} h_{ij}\Big(\frac{r}{\varepsilon},\widetilde{X}^{\varepsilon,t,x}_{r}\Big)\mathbb{I}_{[t,T]}(r)d\langle B^{i},B^{j}\rangle_{r},\\
	    \widetilde{Y}^{\varepsilon,t,x}_{s}
	    =
	    &\,\varphi(\widetilde{X}^{\varepsilon,t,x}_{T})+\int_s^{T} f\Big(\frac{r}{\varepsilon},\widetilde{X}^{\varepsilon,t,x}_{r},\widetilde{Y}^{\varepsilon,t,x}_{r},\widetilde{Z}^{\varepsilon,t,x}_{r}\Big)\mathbb{I}_{[t,T]}(r)dr-\int_s^{T}\widetilde{Z}^{\varepsilon,t,x}_{r} dB_{r}+\big(\widetilde{A}^{\varepsilon,t,x}_{T}-\widetilde{A}^{\varepsilon,t,x}_{s}\big)\\
		&+\sum_{i,j=1}^d\int_s^{T} g_{ij}\Big(\frac{r}{\varepsilon},\widetilde{X}^{\varepsilon,t,x}_{r},\widetilde{Y}^{\varepsilon,t,x}_{r},\widetilde{Z}^{\varepsilon,t,x}_{r}\Big)\mathbb{I}_{[t,T]}(r)d\langle B^{i},B^{j}\rangle_{r},\hfill\hfill
	\end{aligned} \right. 
\end{equation}\vspace{-5pt}\\
where the functions $b,\,\sigma,\,h_{ij},\,f,\,g_{ij}$ and $\varphi$ are defined in \eqref{(3.1)}.\par
$\mathbf{Part\ 1.}$ We firstly demonstrate that the coefficients of the above equation \eqref{(4.5)} still satisfy assumptions (H1)-(H2). For simplicity, $\ \forall (r,x',y',z') \in [0,T] \times \mathbb{R}^{n} \times \mathbb{R} \times \mathbb{R}^{1 \times d} $, we redefine these coefficients:
\begin{equation}\label{(4.6)}
	\begin{aligned}
     b^{\varepsilon,t}_{1}(r,x')=b\Big(\frac{r}{\varepsilon},x'\Big)\mathbb{I}_{[t,T]}(r),\quad
     \sigma^{\varepsilon,t}_{1}(r,x')=\sigma\Big(\frac{r}{\varepsilon},x'\Big)\mathbb{I}_{[t,T]}(r),\quad
     h^{\varepsilon,t}_{1,ij}(r,x')=h_{ij}\Big(\frac{r}{\varepsilon},x'\Big)\mathbb{I}_{[t,T]}(r), \\
     f^{\varepsilon,t}_{1}(r,x',y',z')=f\Big(\frac{r}{\varepsilon},x',y',z'\Big)\mathbb{I}_{[t,T]}(r),\quad
     g^{\varepsilon,t}_{1,ij}(r,x',y',z')=g_{ij}\Big(\frac{r}{\varepsilon},x',y',z'\Big)\mathbb{I}_{[t,T]}(r),
	\end{aligned}
\end{equation}
From the properties of the coefficients $b,\,\sigma,\,h_{ij},\,f,\,g_{ij},\,\varphi$, it is not hard to check that the coefficients established in \eqref{(4.6)} also satisfy assumptions (H1)-(H2). \par

$\mathbf{Part\ 2.}$ We now establish a new obstacle process $\{\widetilde{L}^{\varepsilon,t,x}_{s}\}_{s \in [0,T]}$ by utilizing \eqref{(4.4)} and \eqref{(4.5)}. For each fixed $t \in [0,T]$, we consider the following stochastic process: 
\begin{equation}\label{(4.7)}
	\widetilde{L}^{\varepsilon,t,x}_{r}
	\coloneqq
	\widetilde{S}(r,\widetilde{X}^{\varepsilon,t,x}_{r})=\left\{
	\begin{matrix}
		{S}(t,x),\hspace{3.45em} r \in [0,t],\hfill\hfill\\
		\begin{split}
			{S}(r,X^{\varepsilon,t,x}_{r}),\hspace{1.45em} r \in [t,T].\hfill\hfill
		\end{split}
	\end{matrix} \right. \\
\end{equation}
In addition, with the help of the above analysis regarding $\widetilde{S}(\cdot,\cdot)$, we can get $\varphi(\widetilde{X}^{\varepsilon,t,x}_{T}) \geq \widetilde{S}(T,\widetilde{X}^{\varepsilon,t,x}_{T})=S(T,X^{\varepsilon,t,x}_{T})$.\par
$\mathbf{Part\ 3.}$ Among them, by using Theorem 5.1.3 in Peng \cite{Peng3} and Lemma \ref{lemma 5.1} in appendix, the above $G$-SDE in \eqref{(4.5)} admits the unique solution $\widetilde{X}^{\varepsilon,t,x} \in S^{2}_{G}(0,T)$. 
As for the reflected $G$-BSDE with the obstacle process $\{\widetilde{L}^{\varepsilon,t,x}_{s}\}_{s \in [0,T]}$ in \eqref{(4.5)}, there exists a unique triple $(\widetilde{Y}^{\varepsilon,t,x}_{s},\widetilde{Z}^{\varepsilon,t,x}_{s},\widetilde{A}^{\varepsilon,t,x}_{s})_{s\in [0,T]}$, such that\\
${}\hspace{0.40em}$(i) $\widetilde{Y}^{\varepsilon,t,x}_{s} \geq \widetilde{S}(s,\widetilde{X}^{\varepsilon,t,x}_{s}),\ 0 \leq s \leq T$;\\
${}\,$(ii) $\{\widetilde{A}^{\varepsilon,t,x}_{s}\}_{s \in [0,T]}$ is nondecreasing and continuous, and $\big\{-\int_0^{s} \big(\widetilde{Y}^{\varepsilon,t,x}_{r}-\widetilde{S}(r,\widetilde{X}^{\varepsilon,t,x}_{r})\big)d\widetilde{A}^{\varepsilon,t,x}_{r},0 \leq s \leq T\big\}$ is a \\
${}\hspace{2em}$nonincreasing $G$-martingale;\\
(iii)\,$(\widetilde{Y}^{\varepsilon,t,x}_{s},\widetilde{Z}^{\varepsilon,t,x}_{s},\widetilde{A}^{\varepsilon,t,x}_{s})_{s\in [0,T]} \in \mathcal{S}^{2}_{G}(0,T)$.\par
$\mathbf{Part\ 4.}$ We now redefine the form of solution to the above $G$-SDE in \eqref{(4.5)}. For any fixed $t \in [0,T]$, let $\widetilde{X}^{\varepsilon,t,0,x} \coloneqq \widetilde{X}^{\varepsilon,t,x}$. Essentially, the above $G$-SDE in \eqref{(4.5)} can be seen as a dynamical system starting from state $x \in \mathbb{R}^{n}$ at time $0$. And also, thanks to the uniqueness of the solution to the $G$-SDE in \eqref{(4.5)}, the estimates in Lemma \ref{lemma 3.1} are still valid for the $G$-SDE in \eqref{(4.5)}, which paves a convenient and easy way for the subsequent discussions. \par
$\mathbf{Step\ 3.}$ Without loss of generality, for the case $0 \leq t_{1} \leq t_{2} \leq T$, we can establish the connection between the solution $Y^{\varepsilon,t,x}$ and $\widetilde{Y}^{\varepsilon,t,x}$ by the structure of the $G$-FBSDE \eqref{(4.5)}, i.e. $Y^{\varepsilon,t,x}_{t}=\widetilde{Y}^{\varepsilon,t,x}_{0}$. In terms of \eqref{(3.10)} and Lemma \ref{lemma 5.5} in appendix, we have
	\begin{align}
		&\,\big\vert u^{\varepsilon}(t_{1},x)-u^{\varepsilon}(t_{2},x) \big\vert^{2}\notag\\\notag
		=
		&\,\big\vert Y^{\varepsilon,t_{1},x}_{t_{1}}-Y^{\varepsilon,t_{2},x}_{t_{2}} \big\vert^{2}
		=
		\big\vert \widetilde{Y}^{\varepsilon,t_{1},x}_{0}-\widetilde{Y}^{\varepsilon,t_{2},x}_{0} \big\vert^{2}\\\notag
		\leq
		&\,C\hat{\mathbb{E}}_{0}\bigg[\big\vert \varphi\big(\widetilde{X}^{\varepsilon,t_{1},x}_{T}\big)-\varphi\big(\widetilde{X}^{\varepsilon,t_{2},x}_{T}\big)\big\vert^{2}\bigg]\\\notag
		&+C\hat{\mathbb{E}}_{0}\bigg[\int\nolimits_{0}^{T}\Big\vert f\Big(\frac{r}{\varepsilon},\widetilde{X}^{\varepsilon,t_{1},x}_{r},\widetilde{Y}^{\varepsilon,t_{2},x}_{r},\widetilde{Z}^{\varepsilon,t_{2},x}_{r}\Big)\mathbb{I}_{[t_{1},T]}(r)-f\Big(\frac{r}{\varepsilon},\widetilde{X}^{\varepsilon,t_{2},x}_{r},\widetilde{Y}^{\varepsilon,t_{2},x}_{r},\widetilde{Z}^{\varepsilon,t_{2},x}_{r}\Big)\mathbb{I}_{[t_{2},T]}(r)\Big\vert^{2}dr\bigg]\\\notag
		&+C\bigg(\hat{\mathbb{E}}_{0}\bigg[\sup_{s \in [0,T]}  \big\vert \widetilde{S}\big(s,\widetilde{X}^{\varepsilon,t_{1},x}_{s}\big)-\widetilde{S}\big(s,\widetilde{X}^{\varepsilon,t_{2},x}_{s}\big) \big\vert^{2}\bigg]\bigg)^{\frac{1}{2}}(\Phi_{0,T})^{\frac{1}{2}},\notag
 	\end{align}
and
	\begin{align}
		\Phi_{0,T}=\ &
		\hat{\mathbb{E}}_{0}\bigg[\sup\limits_{s \in [0,T]}\hat{\mathbb{E}}_{s}\bigg[1+\big\vert \varphi\big(\widetilde{X}^{\varepsilon,t_{1},x}_{T}\big)\big\vert^{2} +\int\nolimits_{0}^{T} \Big\vert  f\Big(\frac{u}{\varepsilon},\widetilde{X}^{\varepsilon,t_{1},x}_{u},0,0\Big)\mathbb{I}_{[t_{1},T]}(u)\Big\vert^{2}du\bigg]\bigg]\notag\\ \notag
		&+\hat{\mathbb{E}}_{0}\bigg[\sup\limits_{s \in [0,T]}\hat{\mathbb{E}}_{s}\bigg[1+\big\vert \varphi \big(\widetilde{X}^{\varepsilon,t_{2},x}_{T}\big)\big\vert^{2} +\int\nolimits_{0}^{T} \Big\vert  f\Big(\frac{u}{\varepsilon},\widetilde{X}^{\varepsilon,t_{2},x}_{u},0,0\Big)\mathbb{I}_{[t_{2},T]}(u)\Big\vert^{2}du\bigg]\bigg].
	\end{align}
Given that $u^{\varepsilon}(t,x)$ is a deterministic function, by taking expectation on both sides, it is clear that
	\begin{align}\label{(4.8)}
		&\ \big\vert u^{\varepsilon}(t_{1},x)-u^{\varepsilon}(t_{2},x) \big\vert^{2}\notag\\
		\leq
		&\ C\hat{\mathbb{E}}\bigg[\big\vert \varphi\big(\widetilde{X}^{\varepsilon,t_{1},x}_{T}\big)-\varphi\big(\widetilde{X}^{\varepsilon,t_{2},x}_{T}\big)\big\vert^{2}\bigg]\notag\\
		&+C\hat{\mathbb{E}}\bigg[\int\nolimits_{0}^{T}\Big\vert f\Big(\frac{r}{\varepsilon},\widetilde{X}^{\varepsilon,t_{1},x}_{r},\widetilde{Y}^{\varepsilon,t_{2},x}_{r},\widetilde{Z}^{\varepsilon,t_{2},x}_{r}\Big)\mathbb{I}_{[t_{1},T]}(r)-f\Big(\frac{r}{\varepsilon},\widetilde{X}^{\varepsilon,t_{2},x}_{r},\widetilde{Y}^{\varepsilon,t_{2},x}_{r},\widetilde{Z}^{\varepsilon,t_{2},x}_{r}\Big)\mathbb{I}_{[t_{2},T]}(r)\Big\vert^{2}dr\bigg]\notag\\
		&+C\bigg(\hat{\mathbb{E}}\bigg[\sup_{s \in [0,T]}  \big\vert \widetilde{S}\big(s,\widetilde{X}^{\varepsilon,t_{1},x}_{s}\big)-\widetilde{S}\big(s,\widetilde{X}^{\varepsilon,t_{2},x}_{s}\big) \big\vert^{2}\bigg]\bigg)^{\frac{1}{2}}(\Phi'_{0,T})^{\frac{1}{2}},\notag\\
		\coloneqq
		&\ J_{1}+J_{2}+J_{3}(J_{4}+J_{5})^{\frac{1}{2}},
	\end{align}
and
	\begin{align}
		\Phi'_{0,T}=&\ 
		\hat{\mathbb{E}}\bigg[\sup\limits_{s \in [0,T]}\hat{\mathbb{E}}_{s}\bigg[1+\big\vert \varphi\big(\widetilde{X}^{\varepsilon,t_{1},x}_{T}\big)\big\vert^{2} +\int\nolimits_{0}^{T} \Big\vert  f\Big(\frac{u}{\varepsilon},\widetilde{X}^{\varepsilon,t_{1},x}_{u},0,0\Big)\mathbb{I}_{[t_{1},T]}(u)\Big\vert^{2}du\bigg]\bigg]\notag\\ \notag
		&+\hat{\mathbb{E}}\bigg[\sup\limits_{s \in [0,T]}\hat{\mathbb{E}}_{s}\bigg[1+\big\vert \varphi\big(\widetilde{X}^{\varepsilon,t_{2},x}_{T}\big)\big\vert^{2} +\int\nolimits_{0}^{T} \Big\vert  f\Big(\frac{u}{\varepsilon},\widetilde{X}^{\varepsilon,t_{2},x}_{u},0,0\Big)\mathbb{I}_{[t_{2},T]}(u)\Big\vert^{2}du\bigg]\bigg]\notag\\\notag
		\coloneqq
		&\ J_{4}+J_{5},
	\end{align}
where the positive constant $C$ depends on $L,\,T,\,c,\,\underline{\sigma}$.\\
As for $J_{1}$, from the structure of the new $G$-SDE in \eqref{(4.5)} and the uniqueness of solution to the previous $G$-SDE in \eqref{(3.1)}, it is easy to check that $\widetilde{X}^{\varepsilon,t_{1},x}_{r}={X}^{\varepsilon,t_{1},x}_{r}={X}^{\varepsilon,t_{2},{X}^{\varepsilon,t_{1},x}_{t_{2}}}_{r}$ for any $r \in [t_{2},T]$. At the same time, we also get $\widetilde{X}^{\varepsilon,t_{2},x}_{r}={X}^{\varepsilon,t_{2},x}_{r}$ when $r \in [t_{2},T]$. Thus, by Cauchy-Schwarz inequality and assumption (H2), we have 
\begin{align}
	J_{1}
	=
	&\,C\hat{\mathbb{E}}\bigg[\big\vert \varphi\big(\widetilde{X}^{\varepsilon,t_{1},x}_{T}\big)-\varphi\big(\widetilde{X}^{\varepsilon,t_{2},x}_{T}\big)\big\vert^{2}\bigg]\notag\\\notag
    \leq
    &\,C\bigg(\hat{\mathbb{E}}\bigg[\sup_{r \in [t_{2},T]}\Big(1+\big\vert \widetilde{X}^{\varepsilon,t_{1},x}_{r}\big\vert^{4m}+\big\vert \widetilde{X}^{\varepsilon,t_{2},x}_{r}\big\vert^{4m}\Big)\bigg]\bigg)^{\frac{1}{2}}
    \bigg(\hat{\mathbb{E}}\bigg[\sup_{r \in [t_{2},T]}\big\vert \widetilde{X}^{\varepsilon,t_{1},x}_{r}-\widetilde{X}^{\varepsilon,t_{2},x}_{r}\big\vert^{4}\bigg]\bigg)^{\frac{1}{2}}\notag
\end{align}
\begin{align}
		\leq
		&\,C\bigg(\hat{\mathbb{E}}\bigg[\sup_{r \in [t_{2},T]}\Big(1+\Big\vert {X}^{\varepsilon,t_{2},{X}^{\varepsilon,t_{1},x}_{t_{2}}}_{r}\Big\vert^{4m}+\big\vert X^{\varepsilon,t_{2},x}_{r}\big\vert^{4m}\Big)\bigg]\bigg)^{\frac{1}{2}}
		\bigg(\hat{\mathbb{E}}\bigg[\sup_{r \in [t_{2},T]}\Big\vert {X}^{\varepsilon,t_{2},{X}^{\varepsilon,t_{1},x}_{t_{2}}}_{r}-X^{\varepsilon,t_{2},x}_{r}\Big\vert^{4}\bigg]\bigg)^{\frac{1}{2}}.\notag
\end{align}
By Lemma \ref{lemma 3.1} and a similar analysis as \eqref{(3.6)}, we deduce that
	\begin{align}\label{(4.9)}
		J_{1}
		\leq
		&\,C\bigg(1+\vert x\vert^{4m}+\hat{\mathbb{E}}\bigg[\sup_{r \in [t_{1},t_{2}]}\big\vert X^{\varepsilon,t_{1},x}_{r}\big\vert^{4m}\bigg]\bigg)^{\frac{1}{2}}
		\bigg(\hat{\mathbb{E}}\bigg[\sup_{r \in [t_{1},t_{2}]}\big\vert X^{\varepsilon,t_{1},x}_{r}-x\big\vert^{4}\bigg]\bigg)^{\frac{1}{2}}\notag\\
		\leq
		&\,C(1+\vert x\vert^{2m+2})\vert t_{2}-t_{1} \vert.
	\end{align}
Owing to the emergence of indicator function, the following analysis regarding $J_{2}$ need to be conducted in three different time intervals, i.e., $[0,t_{1}],\,[t_{1},t_{2}]$ and $[t_{2},T]$. Thus, we get
\begin{align}
	J_{2}
	=
	&\,C\hat{\mathbb{E}}\bigg[\int\nolimits_{0}^{T}\Big\vert f\Big(\frac{r}{\varepsilon},\widetilde{X}^{\varepsilon,t_{1},x}_{r},\widetilde{Y}^{\varepsilon,t_{2},x}_{r},\widetilde{Z}^{\varepsilon,t_{2},x}_{r}\Big)\mathbb{I}_{[t_{1},T]}(r)-f\Big(\frac{r}{\varepsilon},\widetilde{X}^{\varepsilon,t_{2},x}_{r},\widetilde{Y}^{\varepsilon,t_{2},x}_{r},\widetilde{Z}^{\varepsilon,t_{2},x}_{r}\Big)\mathbb{I}_{[t_{2},T]}(r)\Big\vert^{2}dr\bigg]\notag\\\notag
	\leq
	&\,C\hat{\mathbb{E}}\bigg[\int\nolimits_{t_{1}}^{t_{2}}\Big\vert f\Big(\frac{r}{\varepsilon},\widetilde{X}^{\varepsilon,t_{1},x}_{r},\widetilde{Y}^{\varepsilon,t_{2},x}_{r},\widetilde{Z}^{\varepsilon,t_{2},x}_{r}\Big)\Big\vert^{2}dr\bigg]\\\notag
	&+C\hat{\mathbb{E}}\bigg[\int\nolimits_{t_{2}}^{T}\Big\vert f\Big(\frac{r}{\varepsilon},\widetilde{X}^{\varepsilon,t_{1},x}_{r},\widetilde{Y}^{\varepsilon,t_{2},x}_{r},\widetilde{Z}^{\varepsilon,t_{2},x}_{r}\Big)-f\Big(\frac{r}{\varepsilon},\widetilde{X}^{\varepsilon,t_{2},x}_{r},\widetilde{Y}^{\varepsilon,t_{2},x}_{r},\widetilde{Z}^{\varepsilon,t_{2},x}_{r}\Big)\Big\vert^{2}dr\bigg]\\\notag
	\coloneqq
	&\,J_{21}+J_{22}.
\end{align}
Likewise, as for $J_{21}$, for any $r \in [t_{1},t_{2}]$, we can immediately see that $\widetilde{X}^{\varepsilon,t_{1},x}_{r}={X}^{\varepsilon,t_{1},x}_{r},\,\widetilde{Y}^{\varepsilon,t_{2},x}_{r}={Y}^{\varepsilon,t_{2},x}_{t_{2}}$ and $\widetilde{Z}^{\varepsilon,t_{2},x}_{r}=0$. By assumption (H2) and Lemmas \ref{lemma 3.1}-\ref{lemma 3.2}, we obtain
	\begin{align}
    J_{21}
    =
    &\,C\hat{\mathbb{E}}\bigg[\int\nolimits_{t_{1}}^{t_{2}}\Big\vert f\Big(\frac{r}{\varepsilon},\widetilde{X}^{\varepsilon,t_{1},x}_{r},\widetilde{Y}^{\varepsilon,t_{2},x}_{r},\widetilde{Z}^{\varepsilon,t_{2},x}_{r}\Big)\Big\vert^{2}dr\bigg]\notag\\\notag
    \leq
    &\,C\hat{\mathbb{E}}\bigg[\int\nolimits_{t_{1}}^{t_{2}}
    \Big(\big(1+\big\vert\widetilde{X}^{\varepsilon,t_{1},x}_{r}\big\vert^{2m}\big)\big\vert\widetilde{X}^{\varepsilon,t_{1},x}_{r}\big\vert^{2}
    +\big\vert \widetilde{Y}^{\varepsilon,t_{2},x}_{r} \big\vert^{2}
    +\big\vert \widetilde{Z}^{\varepsilon,t_{2},x}_{r} \big\vert^{2}\Big)dr\bigg]\notag
    +C\hat{\mathbb{E}}\bigg[\int\nolimits_{t_{1}}^{t_{2}}\Big\vert f\Big(\frac{r}{\varepsilon},0,0,0\Big)\Big\vert^{2}dr\bigg]\\\notag
    \leq
    &\,C\vert t_{2}-t_{1} \vert\hat{\mathbb{E}}\bigg[\sup\limits_{r \in [t_{1},t_{2}]}
    \big(1+\big\vert\widetilde{X}^{\varepsilon,t_{1},x}_{r}\big\vert^{2m}\big)\sup\limits_{r \in [t_{1},t_{2}]}\big\vert\widetilde{X}^{\varepsilon,t_{1},x}_{r}\big\vert^{2}\bigg]\notag
    +C\vert t_{2}-t_{1}\vert\hat{\mathbb{E}}\bigg[\sup\limits_{r \in [t_{1},t_{2}]}\big\vert \widetilde{Y}^{\varepsilon,t_{2},x}_{r} \big\vert^{2}\bigg]
    +C\vert t_{2}-t_{1}\vert\notag \\\notag
    \leq
    &\,C\vert t_{2}-t_{1} \vert\hat{\mathbb{E}}\bigg[\sup\limits_{r \in [t_{1},t_{2}]}
    \big(1+\vert X^{\varepsilon,t_{1},x}_{r}\vert^{2m+2}\big)\bigg]
    +C\vert t_{2}-t_{1}\vert \vert Y^{\varepsilon,t_{2},x}_{t_{2}} \vert^{2}
    +C\vert t_{2}-t_{1}\vert \\\notag
    \leq
    &\,C(1+\vert x\vert ^{2m+2})\vert t_{2}-t_{1}\vert.
\end{align}
Similar to the idea of $J_{1}$, using assumption (H2), Cauchy-Schwarz inequality and Lemma \ref{lemma 3.1} can yield that
\begin{align}
		J_{22}
		=
		&\,C\hat{\mathbb{E}}\bigg[\int\nolimits_{t_{2}}^{T}\Big\vert f\Big(\frac{r}{\varepsilon},\widetilde{X}^{\varepsilon,t_{1},x}_{r},\widetilde{Y}^{\varepsilon,t_{2},x}_{r},\widetilde{Z}^{\varepsilon,t_{2},x}_{r}\Big)-f\Big(\frac{r}{\varepsilon},\widetilde{X}^{\varepsilon,t_{2},x}_{r},\widetilde{Y}^{\varepsilon,t_{2},x}_{r},\widetilde{Z}^{\varepsilon,t_{2},x}_{r}\Big)\Big\vert^{2}dr\bigg]\notag\\\notag
		\leq
		&\,C\hat{\mathbb{E}}\bigg[\sup\limits_{r \in [t_{2},T]}
		\big(1+\big\vert\widetilde{X}^{\varepsilon,t_{1},x}_{r}\big\vert^{2m}+\big\vert\widetilde{X}^{\varepsilon,t_{2},x}_{r}\big\vert^{2m}\big)\sup\limits_{r \in [t_{2},T]}\big\vert\widetilde{X}^{\varepsilon,t_{1},x}_{r}-\widetilde{X}^{\varepsilon,t_{2},x}_{r}\big\vert^{2}\bigg]\\\notag
		\leq
		&\,C\hat{\mathbb{E}}\bigg[\sup\limits_{r \in [t_{2},T]}
		\big(1+\vert X^{\varepsilon,t_{1},x}_{r}\vert^{2m}+\vert X^{\varepsilon,t_{2},x}_{r}\vert^{2m}\big)\sup\limits_{r \in [t_{2},T]}\vert X^{\varepsilon,t_{1},x}_{r}-X^{\varepsilon,t_{2},x}_{r}\vert^{2}\bigg]\notag\\\notag
		\leq
		&\,C\bigg(\hat{\mathbb{E}}\bigg[\sup\limits_{r \in [t_{2},T]}
		\Big(1+\Big\vert X^{\varepsilon,t_{2},X^{\varepsilon,t_{1},x}_{t_{2}}}_{r}\Big\vert^{4m}+\vert X^{\varepsilon,t_{2},x}_{r}\vert^{4m}\Big)\bigg]\bigg)^{\frac{1}{2}}
		\bigg(\hat{\mathbb{E}}\bigg[\sup\limits_{r \in [t_{2},T]}\Big\vert X^{\varepsilon,t_{2},X^{\varepsilon,t_{1},x}_{t_{2}}}_{r}-X^{\varepsilon,t_{2},x}_{r}\Big\vert^{4}\bigg]\bigg)^{\frac{1}{2}}\notag\\\notag
		\leq
		&\,C\bigg(1+\vert x \vert ^{4m}+\hat{\mathbb{E}}\bigg[\sup\limits_{r \in [t_{1},t_{2}]}
		\vert X^{\varepsilon,t_{1},x}_{r}\vert^{4m}\bigg]\bigg)^{\frac{1}{2}}
		\bigg(\hat{\mathbb{E}}\bigg[\sup\limits_{r \in [t_{1},t_{2}]}\vert X^{\varepsilon,t_{1},x}_{r}-x\vert^{4}\bigg]\bigg)^{\frac{1}{2}}\notag\\
		\leq
		&\,C(1+ \vert x \vert ^{2m+2}) \vert t_{2}-t_{1}\vert.\notag
\end{align}
Thus, we can conclude that
	\begin{align}\label{(4.10)}
		J_{2}\leq J_{21}+J_{22} \leq C(1+ \vert x \vert ^{2m+2}) \vert t_{2}-t_{1}\vert. 
	\end{align}
As for $J_{3}$, note that the properties of the function $\widetilde{S}$ we have concluded, together with similar analysis as before, we get
\begin{equation}\label{(4.11)}
	\begin{aligned}
    J_{3}
    =&\,C\bigg(\hat{\mathbb{E}}\bigg[\sup_{s \in [0,T]}  \big\vert \widetilde{S}\big(s,\widetilde{X}^{\varepsilon,t_{1},x}_{s}\big)-\widetilde{S}\big(s,\widetilde{X}^{\varepsilon,t_{2},x}_{s}\big) \big\vert^{2}\bigg]\bigg)^{\frac{1}{2}}\\
    \leq
    &\,C\bigg(\hat{\mathbb{E}}\bigg[\sup_{s \in [t_{1},t_{2}]}  \big\vert X^{\varepsilon,t_{1},x}_{s}-x \big\vert^{2}\bigg]\bigg)^{\frac{1}{2}}
    +C\bigg(\hat{\mathbb{E}}\bigg[\sup_{s \in [t_{2},T]}  \Big\vert X^{\varepsilon,t_{2},X^{\varepsilon,t_{1},x}_{t_{2}}}_{s}-X^{\varepsilon,t_{2},x}_{s} \Big\vert^{2}\bigg]\bigg)^{\frac{1}{2}}\\
    \leq
    &\,C(1+ \vert x \vert ) \vert t_{2}-t_{1}\vert^{\frac{1}{2}}
    +C\bigg(\hat{\mathbb{E}}\bigg[\sup_{s \in [t_{1},t_{2}]}  \big\vert X^{\varepsilon,t_{1},x}_{s}-x \big\vert^{2}\bigg]\bigg)^{\frac{1}{2}}\\
    \leq
    &\,C(1+ \vert x \vert) \vert t_{2}-t_{1}\vert^{\frac{1}{2}}.
\end{aligned}
\end{equation}
As for $J_4$, by means of assumption (H2), Lemma \ref{lemma 3.1} and $\widetilde{X}^{\varepsilon,t_{i},0,x}
\coloneqq \widetilde{X}^{\varepsilon,t_{i},x},\,i=1,2$, we deduce that
\begin{align}
	J_{4}
	=
    &\,\hat{\mathbb{E}}\bigg[\sup\limits_{s \in [0,T]}\hat{\mathbb{E}}_{s}\bigg[1+\big\vert \varphi\big(\widetilde{X}^{\varepsilon,t_{1},x}_{T}\big)\big\vert^{2} +\int\nolimits_{0}^{T} \Big\vert  f\Big(\frac{u}{\varepsilon},\widetilde{X}^{\varepsilon,t_{1},x}_{u},0,0\Big)\mathbb{I}_{[t_{1},T]}(u)\Big\vert^{2}du\bigg]\bigg]\notag\\\notag
    \leq
    &\,C\hat{\mathbb{E}}\bigg[\sup\limits_{s \in [0,T]}\hat{\mathbb{E}}_{s}\Big[1+\big\vert \widetilde{X}^{\varepsilon,t_{1},0,x}_{T}\big\vert^{2m+2}\Big]\bigg]
    +\hat{\mathbb{E}}\bigg[\sup\limits_{s \in [0,T]}\hat{\mathbb{E}}_{s}\bigg[\int\nolimits_{0}^{T} C\Big(1+\big\vert \widetilde{X}^{\varepsilon,t_{1},0,x}_{u}\big\vert^{2m+2}\Big)du+CT\bigg]\bigg]\\\notag
    \coloneqq 
    &\,J_{41}+J_{42}.
\end{align}
By applying $\widetilde{X}^{\varepsilon,t_{i},0,x}_{s}=\widetilde{X}^{\varepsilon,t_{i},r,\widetilde{X}^{\varepsilon,t_{i},0,x}_{r}}_{s},\,i=1,2,\,\forall  s \in [r,T]$, we have
\begin{align}
		J_{41}
		=&\,C\hat{\mathbb{E}}\bigg[\sup\limits_{s \in [0,T]}\hat{\mathbb{E}}_{s}\Big[1+\big\vert \widetilde{X}^{\varepsilon,t_{1},0,x}_{T}\big\vert^{2m+2}\Big]\bigg]\notag\\\notag
		\leq
		&\,C\hat{\mathbb{E}}\bigg[\sup\limits_{s \in [0,T]}\hat{\mathbb{E}}_{s}\bigg[\sup\limits_{r \in [s,T]}\Big(1+\Big\vert \widetilde{X}^{\varepsilon,t_{1},s,\widetilde{X}^{\varepsilon,t_{1},0,x}_{s}}_{r}\Big\vert^{2m+2}\Big)\bigg]\bigg]\\\notag
		\leq
		&\,C\hat{\mathbb{E}}\bigg[\sup\limits_{s \in [0,T]}\Big(1+\big\vert \widetilde{X}^{\varepsilon,t_{1},0,x}_{s}\big\vert^{2m+2}\Big)\bigg]\\\notag
		\leq
		&\,C(1+\vert x \vert^{2m+2}),\notag
\end{align}
and
	\begin{align}
		J_{42}
		=&\,\hat{\mathbb{E}}\bigg[\sup\limits_{s \in [0,T]}\hat{\mathbb{E}}_{s}\bigg[\int\nolimits_{0}^{T} C\Big(1+\big\vert \widetilde{X}^{\varepsilon,t_{1},0,x}_{u}\big\vert^{2m+2}\Big)du+CT\bigg]\bigg]\notag\\\notag
		\leq
		&\,\hat{\mathbb{E}}\bigg[\sup\limits_{s \in [0,T]}\hat{\mathbb{E}}_{s}\bigg[\int\nolimits_{s}^{T} C\Big(1+\big\vert \widetilde{X}^{\varepsilon,t_{1},0,x}_{u}\big\vert^{2m+2}\Big)du\bigg]\bigg]
		+\hat{\mathbb{E}}\bigg[\int\nolimits_{0}^{T} C\Big(1+\big\vert \widetilde{X}^{\varepsilon,t_{1},0,x}_{u}\big\vert^{2m+2}\Big)du\bigg]+CT\\\notag
		\leq
		&\,C\hat{\mathbb{E}}\bigg[\sup\limits_{s \in [0,T]}\hat{\mathbb{E}}_{s}\bigg[\sup\limits_{u \in [s,T]} \Big(1+\big\vert \widetilde{X}^{\varepsilon,t_{1},0,x}_{u}\big\vert^{2m+2}\Big)\bigg]\bigg]
		+C\hat{\mathbb{E}}\bigg[\sup\limits_{u \in [0,T]} \Big(1+\big\vert \widetilde{X}^{\varepsilon,t_{1},0,x}_{u}\big\vert^{2m+2}\Big)\bigg]
		+CT\notag\\\notag
 	    \leq
 	    &\,C\hat{\mathbb{E}}\bigg[\sup\limits_{s \in [0,T]}\hat{\mathbb{E}}_{s}\bigg[\sup\limits_{u \in [s,T]} \Big(1+\Big\vert \widetilde{X}^{\varepsilon,t_{1},s,\widetilde{X}^{\varepsilon,t_{1},0,x}_{s}}_{u}\Big\vert^{2m+2}\Big)\bigg]\bigg]
    	+C(1+\vert x\vert^{2m+2})
 	    +CT\notag\\\notag
		\leq
		&\,C\hat{\mathbb{E}}\bigg[\sup\limits_{s \in [0,T]} \Big(1+\big\vert \widetilde{X}^{\varepsilon,t_{1},0,x}_{s}\big\vert^{2m+2}\Big)\bigg]
		+C(1+\vert x\vert^{2m+2})
		+CT\\\notag
		\leq
		&\,C(1+\vert x\vert^{2m+2}).
	\end{align}
Thus, it is clear that
	\begin{align}\label{(4.12)}
		J_{4} \leq J_{41}+J_{42} \leq C(1+\vert x\vert^{2m+2}).
	\end{align}
Similarly, $J_{5} \leq C(1+\vert x\vert^{2m+2})$ also holds.
To sum up, we reorganize the above estimates to substitute into \eqref{(4.8)}, such that
\begin{align}
		\big\vert u^{\varepsilon}(t_{1},x)-u^{\varepsilon}(t_{2},x) \big\vert^{2}
		\leq 
		&\,J_{1}+J_{2}+J_{3}(J_{4}+J_{5})^{\frac{1}{2}}\notag\\\notag
		\leq
		&\,C(1+\vert x\vert^{2m+2})\vert t_{2}-t_{1}\vert
		+C(1+\vert x\vert^{m+2})\vert t_{2}-t_{1}\vert^{\frac{1}{2}},
\end{align}
where the constant $C$ depends on $T,L,c$ and $\underline{\sigma}$. Thus, we obtain the desired result.
\end{proof}\par
\begin{lemma}\label{lemma 4.5}
	Suppose that $\mathrm{(H1)}$-$\mathrm{(H2)}$ and $\mathrm{(H4)}$ hold. Then for any $s,t \in [0,T]$ and $x,\,x' \in \mathbb{R}^{n}\mathrm{,}$ we have\\
	${}\hspace{0.25em}$$\bm{(\mathbf{i})}$
	$\vert u^{\varepsilon}(t,x)-u^{\varepsilon}(t,x')\vert
	\leq C(1+\vert x \vert^{m\vee 2}+\vert x' \vert^{m\vee 2}) \vert x-x' \vert$$\mathrm{;}$\\
	$\bm{(\mathbf{ii})}$
	$\vert u^{\varepsilon}(t,x)-u^{\varepsilon}(s,x)\vert
	\leq C[(1+\vert x \vert^{(m+1)\vee 3}) \vert t-s \vert^{\frac{1}{2}}+\vert S(t,x)-S(s,x)\vert]$$\mathrm{;}$\\
	where the constant $C$ depends on $T,L,c$ and $\underline{\sigma}$.
\end{lemma}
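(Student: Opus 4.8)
The plan is to exploit the regularity of the obstacle in (H4) to \emph{decouple the solution from its obstacle} by an It\^o change of variables, which reduces both estimates to a priori bounds for a reflected $G$-BSDE whose lower obstacle is the constant $0$; for such a ``common-obstacle'' pair the obstacle-comparison term that forced the weaker exponents in Lemma \ref{lemma 4.4} simply disappears. Concretely, fix $(t,\xi)$ and set $\widehat{Y}^{\varepsilon,t,\xi}_{s}\coloneqq Y^{\varepsilon,t,\xi}_{s}-S(s,X^{\varepsilon,t,\xi}_{s})$ and $\widehat{Z}^{\varepsilon,t,\xi}_{s}\coloneqq Z^{\varepsilon,t,\xi}_{s}-D_{x}S(s,X^{\varepsilon,t,\xi}_{s})\sigma(\tfrac{s}{\varepsilon},X^{\varepsilon,t,\xi}_{s})$. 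Since $S\in C^{1,2}_{Lip}$ and, by Lemma \ref{lemma 3.1}(i), $X^{\varepsilon,t,\xi}$ has finite moments of every order, the $G$-It\^o formula applied to $s\mapsto S(s,X^{\varepsilon,t,\xi}_{s})$ shows that $(\widehat{Y}^{\varepsilon,t,\xi},\widehat{Z}^{\varepsilon,t,\xi},A^{\varepsilon,t,\xi})$ solves, on $[t,T]$, a reflected $G$-BSDE with the same forward equation for $X^{\varepsilon,t,\xi}$, terminal value $\widehat{\varphi}(X^{\varepsilon,t,\xi}_{T})$ with $\widehat\varphi(x)\coloneqq\varphi(x)-S(T,x)\geq 0$ (by (H4)), generator $\widehat{f}(r,x,y,z)=f(\tfrac{r}{\varepsilon},x,y+S(r,x),z+D_{x}S(r,x)\sigma(\tfrac{r}{\varepsilon},x))+\partial_{r}S(r,x)+D_{x}S(r,x)b(\tfrac{r}{\varepsilon},x)$, analogous $\widehat{g}_{ij}$ with an extra term $\tfrac12(\sigma^{\top}D^{2}_{x}S\,\sigma)_{ij}$, and lower obstacle identically $0$ (so $\widehat{Y}^{\varepsilon,t,\xi}\geq 0$ and $-\int_{t}^{\cdot}\widehat{Y}^{\varepsilon,t,\xi}_{r}\,dA^{\varepsilon,t,\xi}_{r}$ is a nonincreasing $G$-martingale). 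Using (H1), (H2), (H4) and the product rule, $\widehat{f},\widehat{g}_{ij}$ stay $L$-Lipschitz in $(y,z)$ and acquire a polynomial Lipschitz bound in $x$ with weight $1+|x|^{m\vee 2}+|x'|^{m\vee 2}$ and a growth bound with weight $1+|x|^{(m+1)\vee 3}$ (the exponents $2$ and $3$ coming from the $\sigma^{\top}D^{2}_{x}S\,\sigma$ term combined with the linear growth of $\sigma$, while $\widehat\varphi$ keeps weight $m$, resp.\ $m+1$); moreover $u^{\varepsilon}(t,x)=\widehat{Y}^{\varepsilon,t,x}_{t}+S(t,x)$.

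For (i), $u^{\varepsilon}(t,x)-u^{\varepsilon}(t,x')=\widehat{Y}^{\varepsilon,t,x}_{t}-\widehat{Y}^{\varepsilon,t,x'}_{t}$, and the two transformed problems have the \emph{same} obstacle $0$. Feeding this into the a priori difference estimate Lemma \ref{lemma 5.5} (exactly as in the proof of Lemma \ref{lemma 3.2}(ii)), the obstacle-comparison term $(\hat{\mathbb{E}}_{t}[\sup_{r}|L^{1}_{r}-L^{2}_{r}|^{2}])^{1/2}(\cdots)^{1/2}$ vanishes, so only the terminal and generator-difference terms remain; bounding them through the polynomial Lipschitz weights above, Cauchy--Schwarz, Lemma \ref{lemma 3.1}(i),(iii) and the flow identity $X^{\varepsilon,t,x}_{r}=X^{\varepsilon,s,X^{\varepsilon,t,x}_{s}}_{r}$ (as for $I_{1}$ in Lemma \ref{lemma 3.2}) yields $|u^{\varepsilon}(t,x)-u^{\varepsilon}(t,x')|^{2}\leq C(1+|x|^{2(m\vee 2)}+|x'|^{2(m\vee 2)})|x-x'|^{2}$, which is (i) after taking square roots.

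For (ii), I would run Steps 1--3 of the proof of Lemma \ref{lemma 4.4} on the \emph{transformed} equation: extend it from $[t_{i},T]$ to $[0,T]$ (freezing the forward process and putting $\widetilde{\widehat{Y}}\equiv\widehat{Y}^{\varepsilon,t_{i},x}_{t_{i}}\geq 0$ on $[0,t_{i}]$), which again carries the constant obstacle $0$, and compare $\widetilde{\widehat{Y}}^{\varepsilon,t_{1},x}_{0}$ with $\widetilde{\widehat{Y}}^{\varepsilon,t_{2},x}_{0}$ via Lemma \ref{lemma 5.5}. Because the two extended obstacles coincide, the obstacle-comparison term is again $0$, so the $|t_{2}-t_{1}|^{1/4}$ contribution present in Lemma \ref{lemma 4.4}(ii) does not arise; the terminal term, the drift/indicator term on $[t_{1},t_{2}]$ and the generator-difference term on $[t_{2},T]$ are estimated precisely as $J_{1},J_{21},J_{22}$ there (using Lemma \ref{lemma 3.1}(ii) for $|X^{\varepsilon,t_{1},x}_{r}-x|$ on $[t_{1},t_{2}]$), giving $|\widehat{Y}^{\varepsilon,t_{1},x}_{t_{1}}-\widehat{Y}^{\varepsilon,t_{2},x}_{t_{2}}|\leq C(1+|x|^{(m+1)\vee 3})|t_{2}-t_{1}|^{1/2}$. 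Combining with $u^{\varepsilon}(t,x)=\widehat{Y}^{\varepsilon,t,x}_{t}+S(t,x)$ gives $|u^{\varepsilon}(t,x)-u^{\varepsilon}(s,x)|\leq|\widehat{Y}^{\varepsilon,t,x}_{t}-\widehat{Y}^{\varepsilon,s,x}_{s}|+|S(t,x)-S(s,x)|$, i.e.\ (ii).

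The main obstacle is Step~1: verifying that after the $G$-It\^o change of variables the new data $(\widehat\varphi,\widehat{f},\widehat{g}_{ij})$ genuinely satisfy an (H1)--(H2)-type structure with the sharp weights $m\vee 2$ and $(m+1)\vee 3$. This requires careful bookkeeping, via the product rule, of how the linear growth of $b,\sigma,h_{ij}$ interacts with the merely Lipschitz (hence at most linearly growing) quantities $\partial_{t}S,D_{x}S,D^{2}_{x}S$, and checking the integrability needed for the $G$-It\^o formula and for Lemma \ref{lemma 5.5} to apply on the extended interval. Once this is settled, parts (i) and (ii) are essentially re-runs of the estimates in Lemmas \ref{lemma 3.2} and \ref{lemma 4.4} with the obstacle-comparison terms switched off.
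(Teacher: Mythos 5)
Your proposal is correct and is essentially the argument the paper itself relies on: the paper gives no proof here but defers to Lemmas 6.4 and 6.6 of \cite{LPS}, whose proofs proceed exactly by your $G$-It\^o transformation $\widehat{Y}=Y-S(\cdot,X)$, $\widehat{Z}=Z-D_xS\,\sigma$, reducing to a reflected $G$-BSDE with zero lower obstacle (so the obstacle-difference term in Lemma \ref{lemma 5.5} vanishes and the $c=0$ estimate of Remark \ref{remark 5.3} applies), after which the estimates are re-runs of Lemmas \ref{lemma 3.2} and \ref{lemma 4.4} with the weights $m\vee 2$ and $(m+1)\vee 3$ coming from the $\sigma^{\top}D^{2}_{x}S\,\sigma$ term, exactly as you describe. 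Only note the harmless slip in part (i): since $u^{\varepsilon}(t,x)=\widehat{Y}^{\varepsilon,t,x}_{t}+S(t,x)$, one has $u^{\varepsilon}(t,x)-u^{\varepsilon}(t,x')=\widehat{Y}^{\varepsilon,t,x}_{t}-\widehat{Y}^{\varepsilon,t,x'}_{t}+S(t,x)-S(t,x')$, and the extra term is absorbed via the Lipschitz bound $L\vert x-x'\vert$ from (H4).
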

\begin{proof}
	For the proof of this lemma, one can refer to Lemmas 6.4 and 6.6 in \cite{LPS}.
\end{proof}
\begin{lemma}\label{lemma 4.6}
	Suppose that $\mathrm{(H1)}$-$\mathrm{(H3)}$ hold. Then there exists a sequence $\varepsilon _{k} \downarrow 0\mathrm{,}$ $k \geq 1$ such that $u^{\varepsilon _{k}}$ converges to $u^{\ast} \in C([0,T] \times \mathbb{R}^{n})$. Moreover$\mathrm{,}$ for any $t,\,t_{1},\,t_{2} \in [0,T]$ and $x,\,x_{1},\,x_{2} \in \mathbb{R}^{n}$$\mathrm{,}$ we deduce the following estimates:\\
	${}\hspace{0.66em}$$\bm{(\mathbf{i})}$
	$\vert u^{\ast}(t,x_{1})-u^{\ast}(t,x_{2})\vert^{2} 
	\leq C[(1+\vert x_{1} \vert^{2m}+\vert x_{2} \vert^{2m}) \vert x_{1}-x_{2} \vert^{2}+(1+\vert x_{1} \vert^{m+1}+\vert x_{2} \vert^{m+1}) \vert x_{1}-x_{2} \vert]\mathrm{;}$\\
	${}\hspace{0.33em}$$\bm{(\mathbf{ii})}$
	$\vert u^{\ast}(t_{2},x)-u^{\ast}(t_{1},x)\vert 
	\leq C(1+\vert x \vert^{m+1}) \vert t_{2}-t_{1} \vert^{\frac{1}{2}}+C(1+\vert x \vert^{\frac{m+2}{2}}) \vert t_{2}-t_{1} \vert^{\frac{1}{4}}\mathrm{;}$\\
	$\bm{(\mathbf{iii})}$
	$\vert u^{\ast}(t,x)\vert\leq C(1+\vert x\vert^{m+1})\mathrm{;}$\\
	where the positive constant $C$ depends on $T,L,c$ and $\underline{\sigma}$.
\end{lemma}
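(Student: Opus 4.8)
The plan is to combine the uniform estimates of Lemma~\ref{lemma 4.4} with the Arzel\`a--Ascoli theorem and a diagonal extraction. The first observation is that the constant $C$ in all three parts of Lemma~\ref{lemma 4.4} depends only on $T,L,c$ and $\underline{\sigma}$, and in particular is independent of $\varepsilon\in(0,1)$; this is because the only $\varepsilon$-dependence in \eqref{(3.1)} enters through the oscillating arguments $r/\varepsilon$, while the a priori bounds of Lemmas~\ref{lemma 3.1}--\ref{lemma 3.2} (on which Lemma~\ref{lemma 4.4} rests) were obtained with constants depending only on the Lipschitz/growth data. Consequently, on every compact set $K_{N}=[0,T]\times\{x\in\mathbb{R}^{n}:|x|\leq N\}$ the family $\{u^{\varepsilon}\}_{\varepsilon\in(0,1)}$ is uniformly bounded, by Lemma~\ref{lemma 4.4}(iii), and equicontinuous, by Lemma~\ref{lemma 4.4}(i)--(ii), since on $K_{N}$ the right-hand sides of those inequalities furnish a common modulus of continuity depending only on $N$ and the fixed constants.

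Next I would apply Arzel\`a--Ascoli on $K_{1}$ to extract a sequence $\varepsilon^{(1)}_{k}\downarrow 0$ along which $u^{\varepsilon^{(1)}_{k}}$ converges uniformly on $K_{1}$; inside this sequence, extract a further subsequence $\varepsilon^{(2)}_{k}\downarrow 0$ converging uniformly on $K_{2}$, and continue inductively. Taking the diagonal sequence $\varepsilon_{k}:=\varepsilon^{(k)}_{k}$, arranged to be strictly decreasing to $0$, the functions $u^{\varepsilon_{k}}$ converge uniformly on each $K_{N}$, hence locally uniformly on $[0,T]\times\mathbb{R}^{n}$, to a limit function $u^{\ast}$. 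Since each $u^{\varepsilon_{k}}$ is continuous and the convergence is locally uniform, $u^{\ast}\in C([0,T]\times\mathbb{R}^{n})$.

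Finally, the three estimates for $u^{\ast}$ are obtained by passing to the limit in Lemma~\ref{lemma 4.4}: for fixed $t,t_{1},t_{2}\in[0,T]$ and $x,x_{1},x_{2}\in\mathbb{R}^{n}$ we have $u^{\varepsilon_{k}}\to u^{\ast}$ at each of the relevant points, while the right-hand sides in Lemma~\ref{lemma 4.4}(i)--(iii) do not involve $\varepsilon$; letting $k\to\infty$ preserves the inequalities, yielding (i), (ii) and (iii) for $u^{\ast}$.

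I do not expect a serious obstacle: this is a routine compactness extraction. The only points needing care are to confirm that the equicontinuity supplied by Lemma~\ref{lemma 4.4} is genuinely uniform in $\varepsilon$ (which it is, by the remark above on the constants) and to ensure that the diagonal subsequence $\varepsilon_{k}$ is chosen so that it decreases monotonically to $0$, as asserted in the statement.
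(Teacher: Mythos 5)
Your argument is correct and is essentially the same as the paper's proof: the paper likewise invokes the uniform (in $\varepsilon$) bounds and equicontinuity from Lemma \ref{lemma 4.4}, applies the Arzel\`a--Ascoli theorem on compact subsets of $[0,T]\times\mathbb{R}^{n}$ to extract $\varepsilon_{k}\downarrow 0$ with $u^{\varepsilon_{k}}\to u^{\ast}$ locally uniformly, and then passes to the limit in the estimates. Your write-up merely makes explicit the diagonal extraction and the $\varepsilon$-independence of the constants, which the paper leaves implicit.
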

\begin{proof}
	By the estimates in Lemma \ref{lemma 4.4}, we can directly deduce that $u^{\varepsilon }$ is uniformly bounded and equi-continuity on each compact subset of $[0,T] \times \mathbb{R}^{n}$. According to the Arzel\`{a}-Ascoli theorem, there exists a subsequence $\varepsilon _{k} \downarrow 0$, such that $u^{\varepsilon _{k}}$ uniformly converges to $u^{\ast} \in C([0,T] \times \mathbb{R}^{n})$ on each compact subset of $[0,T] \times \mathbb{R}^{n}$. Thus, it is not hard to check that $u^{\ast}$ has the above estimates. The proof is completed.
\end{proof}\par
Next, we present the main theorem in this paper.
\begin{theorem}\label{Theorem 4.7}
	Suppose that $\mathrm{(H1)}$-$\mathrm{(H3)}$ and $\mathrm{(H5)}$ hold. Then the averaged PDE \eqref{(4.1)} admits a unique viscos-ity solution $\bar{u}$
	such that for each $(t,x) \in [0,T] \times \mathbb{R}^{n}\mathrm{,}$
	\begin{align}\label{(4.13)}
	     \lim\limits_{\varepsilon \rightarrow 0} u^{\varepsilon}(t,x)=\bar{u}(t,x).
	\end{align}
\end{theorem}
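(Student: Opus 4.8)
The plan is to combine the compactness already obtained in Lemma~\ref{lemma 4.6} with a stability argument identifying the subsequential limit as a viscosity solution of the averaged equation \eqref{(4.1)}, and then to use a comparison principle for \eqref{(4.1)} to upgrade subsequential convergence to convergence of the whole family. First I would fix, via Lemma~\ref{lemma 4.6}, a subsequence $\varepsilon_k\downarrow0$ with $u^{\varepsilon_k}\to u^\ast$ uniformly on compact subsets of $[0,T]\times\mathbb{R}^n$, where $u^\ast\in C([0,T]\times\mathbb{R}^n)$ satisfies the polynomial growth bound $|u^\ast(t,x)|\le C(1+|x|^{m+1})$. The terminal and obstacle conditions are then immediate: $u^{\varepsilon}(T,\cdot)=\varphi$ for every $\varepsilon$ gives $u^\ast(T,\cdot)=\varphi$, and $u^{\varepsilon}(t,x)=Y^{\varepsilon,t,x}_t\ge S(t,X^{\varepsilon,t,x}_t)=S(t,x)$ (the reflection constraint recorded in Lemma~\ref{lemma 3.3}) passes to the limit to give $u^\ast(t,x)\ge S(t,x)$ on $[0,T]\times\mathbb{R}^n$. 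It then remains to verify the two interior viscosity inequalities for $u^\ast$, and uniqueness for \eqref{(4.1)}.

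The core of the argument is the averaging step inside the viscosity analysis. By Lemma~\ref{lemma 3.3} one has the dynamic programming identity $u^{\varepsilon}(t,x)=Y^{\varepsilon,t,x}_t$, where on a short interval $[t,t+\delta]$ the triple $(Y^{\varepsilon,t,x},Z^{\varepsilon,t,x},A^{\varepsilon,t,x})$ solves the reflected $G$-BSDE in \eqref{(3.1)} with terminal value $u^{\varepsilon}(t+\delta,X^{\varepsilon,t,x}_{t+\delta})$ and lower obstacle $S(\cdot,X^{\varepsilon,t,x}_\cdot)$. Following the scheme of \cite{HJW}, I would introduce the non-reflected stochastic backward semigroup $\mathcal{E}^{\varepsilon,t,x}_{t,t+\delta}[\cdot]$ associated with the $G$-BSDE in \eqref{(3.1)} with the increasing process removed, along with two auxiliary penalized (non-reflected) $G$-BSDEs that sandwich the reflected one; by the comparison principle for $G$-BSDEs one always has $u^{\varepsilon}(t,x)\ge\mathcal{E}^{\varepsilon,t,x}_{t,t+\delta}\big[u^{\varepsilon}(t+\delta,X^{\varepsilon,t,x}_{t+\delta})\big]$, with equality on any interval where the obstacle stays inactive. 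For the supersolution inequality I would take $(p,q,X)\in\mathcal{P}^{2,-}u^\ast(t,x)$, insert the lower bound $\mathcal{E}^{\varepsilon,t,x}$ into the dynamic programming relation, expand by the $G$-It\^o formula along a smooth test function touching $u^\ast$ from below, and use assumption (H5) to replace the fast variables $b(r/\varepsilon,\cdot),h_{ij}(r/\varepsilon,\cdot),f(r/\varepsilon,\cdot),g_{ij}(r/\varepsilon,\cdot)$ by the time average encoded in $\bar{F}$; letting $k\to\infty$ and then $\delta\to0$ yields $-p-\bar{F}(x,u^\ast(t,x),q,X)\ge0$, hence $\min\big(u^\ast(t,x)-S(t,x),\,-p-\bar{F}(x,u^\ast(t,x),q,X)\big)\ge0$. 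For the subsolution inequality I would take $(p,q,X)\in\mathcal{P}^{2,+}u^\ast(t,x)$: if $u^\ast(t,x)=S(t,x)$ the minimum is trivially $\le0$, and if $u^\ast(t,x)>S(t,x)$ then, by the uniform convergence $u^{\varepsilon_k}\to u^\ast$, the continuity of $S$, and the small-time estimate of Lemma~\ref{lemma 3.1}\,(ii), the obstacle becomes inactive on $[t,t+\delta]$ for $k$ large, so the dynamic programming identity holds with $\mathcal{E}^{\varepsilon_k,t,x}$ and the same averaging computation gives $-p-\bar{F}(x,u^\ast(t,x),q,X)\le0$.

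Having shown that $u^\ast$ is a viscosity solution of \eqref{(4.1)} in the relevant polynomial-growth class, I would then invoke a comparison principle for the averaged obstacle problem — this is exactly where the structural content of (H5) enters, since it guarantees that $\bar{F}$ and $\bar{H}$ inherit the degenerate ellipticity and the Lipschitz/growth estimates of $F$ and $H$, so the comparison result of \cite{LPS} (or the jet-based theory of \cite{CIP}) applies and forces uniqueness; thus $u^\ast=\bar{u}$. Finally, since every uniform-on-compacts subsequential limit of $(u^\varepsilon)_{\varepsilon>0}$ — such limits existing along suitable subsequences by Lemma~\ref{lemma 4.6} — must coincide with the unique $\bar{u}$, a standard sub-subsequence argument upgrades this to $\lim_{\varepsilon\to0}u^{\varepsilon}(t,x)=\bar{u}(t,x)$ for every $(t,x)\in[0,T]\times\mathbb{R}^n$, which is \eqref{(4.13)}.

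The hard part will be the averaging step together with the bookkeeping of the obstacle: one must control, uniformly in $\varepsilon$ and $\delta$, the error incurred when the oscillating coefficients are replaced by their ergodic averages — including the stochastic integral against $dB$, the bracket integrals against $d\langle B^i,B^j\rangle$, and especially the increasing reflection term $A^{\varepsilon,t,x}$ — and show that this last term either vanishes in the limit (when the obstacle is inactive, as needed on the subsolution side) or carries the sign compatible with the required inequality (as needed on the supersolution side). This is precisely what the two auxiliary non-reflected $G$-BSDEs are built to deliver, and it is the point where the present setting genuinely differs from \cite{HJW}.
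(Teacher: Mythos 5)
Your overall architecture --- compactness via Lemma \ref{lemma 4.6}, identification of every subsequential limit as a viscosity solution of \eqref{(4.1)}, uniqueness by a comparison principle, then a sub-subsequence argument --- coincides with the paper's, and your supersolution step (bounding the reflected semigroup \eqref{(4.17)} from below by the non-reflected/penalized auxiliary semigroups, as in \eqref{(4.21)}--\eqref{(4.26)}, and then running the averaging analysis of \cite{HJW}) is essentially Step 1 of Lemma \ref{Lemma 4.10}. The genuine gap is in your subsolution step. You argue that when $u^{\ast}(t,x)>S(t,x)$ the obstacle is ``inactive on $[t,t+\delta]$ for $k$ large'', so the reflected dynamic programming relation may be replaced by the non-reflected semigroup. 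That is the classical Brownian argument and it does not survive the $G$-framework, for two reasons. First, the Skorokhod minimality condition is here replaced by the weaker requirement that $-\int_t^{s}(Y_r-L_r)\,dA_r$ be a nonincreasing $G$-martingale; having $Y>L$ on an interval does not force $A$ to be flat there (the nondecreasing process $A$ also carries the part coming from the nonincreasing $G$-martingale of the unreflected equation), so ``obstacle inactive'' does not by itself identify the reflected and unreflected semigroups. Second, even granting such an identification, the inactivity would have to hold quasi-surely on all of $[t,t+\delta]$ for a fixed $\delta$; from the locally uniform convergence $u^{\varepsilon_k}\to u^{\ast}$ and Lemma \ref{lemma 3.1}\,(ii) you only control moments of $\sup_{s\in[t,t+\delta]}\vert X^{\varepsilon_k,t,x}_{s}-x\vert$, not a q.s. bound, and the stopping-time localization that rescues this in the classical setting is not available under $G$-expectation. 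The paper circumvents this entirely: on the subsolution side it takes a test function $\phi^{1}\geq u^{\ast}$, uses Lemma \ref{Lemma 4.8} to guarantee $\phi^{1}(t+\delta,X^{\varepsilon_k,t,x}_{t+\delta})\geq S(t+\delta,X^{\varepsilon_k,t,x}_{t+\delta})$, feeds this terminal value directly into the reflected semigroup, and then compares two genuinely reflected $G$-BSDEs with the shifted obstacle $S_{1}=S-\phi^{1}\leq 0$, using the estimates of \cite{L} and an ODE representation of the approximating equation. Until your subsolution step is replaced by an argument of this kind, the proof is incomplete.

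A secondary point: your appeal to ``the comparison result of \cite{LPS}'' for uniqueness of \eqref{(4.1)} is not immediate, since under (H5) the averaged Hamiltonian $\bar{F}$ is only given as a Ces\`aro limit and need not have the representation \eqref{(4.2)}; the paper proves the required comparison principle directly (Lemma \ref{lemma 5.6}, via the jet machinery of \cite{CIP}), although the properness, degenerate ellipticity and growth properties you invoke do indeed follow from (H1)--(H3) and (H5). Your concluding sub-subsequence argument and the treatment of the terminal and obstacle constraints for $u^{\ast}$ are fine and agree with the paper.
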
\par
Before giving the proof of Theorem \ref{Theorem 4.7}, we conclude the following Lemmas \ref{(4.8)}-\ref{(4.10)}.

\begin{lemma}\label{Lemma 4.8}
	Suppose that $\mathrm{(H1)}$-$\mathrm{(H3)}$ hold. Then for any $s \in [0,T]$ and $\xi \in L^{2m+3}_{G}(\Omega_{s};\mathbb{R}^{n})\mathrm{,}$ we have 
	\begin{align}\label{(4.14)}
		u^{\ast}(s,\xi)\geq S(s,\xi).
	\end{align}
\end{lemma}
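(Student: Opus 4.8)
The plan is to reduce the statement to the obstacle constraint satisfied by the reflected $G$-BSDE at its own starting time, then pass to the limit along the subsequence $\varepsilon_{k}\downarrow 0$ from Lemma \ref{lemma 4.6}, and finally upgrade the resulting deterministic pointwise inequality to the random argument $\xi$ by composition.

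First I would observe that for fixed $\varepsilon\in(0,1)$ and a \emph{deterministic} $x\in\mathbb{R}^{n}$, the reflected $G$-FBSDE \eqref{(3.1)} with $t=s$ and initial value $x$ has $X^{\varepsilon,s,x}_{s}=x$, so property (ii) of its solution gives $Y^{\varepsilon,s,x}_{s}\ge S(s,X^{\varepsilon,s,x}_{s})=S(s,x)$; combined with the definition \eqref{(3.10)} of $u^{\varepsilon}$ (or with Lemma \ref{lemma 3.3}), this yields $u^{\varepsilon}(s,x)\ge S(s,x)$ for every $x\in\mathbb{R}^{n}$. Next, by Lemma \ref{lemma 4.6} the sequence $u^{\varepsilon_{k}}$ converges to $u^{\ast}$ uniformly on compact subsets of $[0,T]\times\mathbb{R}^{n}$, hence pointwise, and since pointwise limits preserve inequalities I would conclude $u^{\ast}(s,x)\ge S(s,x)$ for all $x\in\mathbb{R}^{n}$.

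It then remains to pass from the deterministic inequality to \eqref{(4.14)}. Since $u^{\ast}(s,\cdot)-S(s,\cdot)\ge 0$ on all of $\mathbb{R}^{n}$, substituting $x=\xi(\omega)$ pathwise immediately gives $u^{\ast}(s,\xi)\ge S(s,\xi)$ quasi-surely; the assumption $\xi\in L^{2m+3}_{G}(\Omega_{s};\mathbb{R}^{n})$ enters only to guarantee that both sides are well-defined elements of $L^{1}_{G}(\Omega_{s})$, which follows from the growth bound $|u^{\ast}(s,x)|\le C(1+|x|^{m+1})$ of Lemma \ref{lemma 4.6}(iii) and the linear growth of $S$ in $x$ from (H3). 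Equivalently, one could compose the inequality $u^{\varepsilon_{k}}(s,x)\ge S(s,x)$ with $\xi$ to get $u^{\varepsilon_{k}}(s,\xi)\ge S(s,\xi)$, and then let $k\to\infty$, using the uniform-in-$k$ polynomial estimate of Lemma \ref{lemma 4.4}(iii) together with the integrability of $\xi$ to justify the limit in $L^{1}_{G}$.

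The argument is short and mostly routine; the only delicate point is the last step, namely checking that the composition of the deterministic limit function $u^{\ast}(s,\cdot)$ with the (possibly unbounded) random variable $\xi$ lives in the $G$-expectation space and that the everywhere inequality of the previous step indeed transfers to a quasi-sure inequality there — this is precisely the reason the extra moment condition $\xi\in L^{2m+3}_{G}$ is imposed, so that all the polynomial moments appearing in the earlier estimates are finite.
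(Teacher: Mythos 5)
Your first paragraph reproduces exactly Step 1 of the paper's proof: for deterministic $x$, the obstacle property (ii) of the reflected $G$-BSDE at $t=s$ gives $u^{\varepsilon}(s,x)=Y^{\varepsilon,s,x}_{s}\geq S(s,x)$, and passing to the limit along the subsequence of Lemma \ref{lemma 4.6} gives $u^{\ast}(s,x)\geq S(s,x)$ for all $x\in\mathbb{R}^{n}$. Up to that point you and the paper agree completely.

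The gap is in the second half, which is precisely where the paper's actual work lies. You dispose of the passage from deterministic $x$ to random $\xi$ by ``substituting $x=\xi(\omega)$ pathwise'' and asserting that well-definedness ``follows from the growth bound''. In the $G$-expectation framework this is not routine: $L^{p}_{G}(\Omega_{s})$ is a completion of cylinder Lipschitz functionals, so membership of the composition $u^{\ast}(s,\xi)$ (and hence the meaning of the inequality \eqref{(4.14)} inside that space) does not follow from a polynomial growth bound together with integrability of $\xi$ alone; this matters later, since Lemmas \ref{Lemma 4.9} and \ref{Lemma 4.10} use $u^{\ast}(t+\delta,X^{\varepsilon_k,t,x}_{t+\delta})$ as a terminal datum of a (reflected) $G$-BSDE, which requires it to live in $L^{p}_{G}$ with $p>2$. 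The paper settles this by the approximation scheme of Steps 2--4 (modeled on Theorem 5.3.5 in Peng \cite{Peng3}): first simple random variables $\xi=\sum_i x_i\mathbb{I}_{A_i}$ on a $\mathcal{B}(\Omega_s)$-partition, then bounded $\xi\in Lip(\Omega_s;\mathbb{R}^n)$ approximated by discretizations $\xi_n$ with $\vert\xi_n-\xi\vert\leq\rho/n$, then general $\xi\in L^{2m+3}_{G}$ via density of bounded Lipschitz variables and a truncation at level $N$, in each case controlling $\hat{\mathbb{E}}[\vert u^{\ast}(s,\xi_k)-u^{\ast}(s,\xi)\vert^{2}]$ with the estimates of Lemma \ref{lemma 4.6} and $\hat{\mathbb{E}}[\vert S(s,\xi_k)-S(s,\xi)\vert^{2}]$ with (H3); the exponent $2m+3$ is used exactly in the truncation step. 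Your shortcut can be rescued if you explicitly invoke the characterization of $L^{p}_{G}$ as quasi-continuous random variables whose $p$-th moments are uniformly integrable (then quasi-continuity of $\xi$, continuity and $(m+1)$-growth of $u^{\ast}(s,\cdot)$, and the extra moment do yield $u^{\ast}(s,\xi)\in L^{2}_{G}$, and the everywhere inequality transfers q.s.), but as written you have replaced the substantive part of the proof with an assertion, while only flagging it as ``delicate''. Either cite that characterization and carry out the uniform-integrability check, or reproduce the paper's Steps 2--4.
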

\begin{proof}
	Inspired by the proof of Theorem 5.3.5 in Peng \cite{Peng3}, the main procedures will conducted in the following four steps.\par
	$\mathbf{Step\ 1.}$ In this step we assume that $\xi$ is a constant, i.e., $\xi=x \in \mathbb{R}^{n}$.\par
	Due to \eqref{(3.10)} and the structure of the reflected $G$-BSDE in \eqref{(3.1)}, for any $\varepsilon \in (0,1)$, it is not hard to check that  
	\begin{align}
		u^{\varepsilon}(s,x) = Y^{\varepsilon,s,x}_{s} \geq S(s,x).\notag
	\end{align}
And also, owing to Lemma \ref{lemma 4.6}, for the point $(s,x)$, there always exists a subsequence of $(u^{\varepsilon})_{\varepsilon \in (0,1)}$, such that $u^{\varepsilon_{j}}(s,x) \rightarrow u^{\ast}(s,x)$ as $j \rightarrow \infty$, which implies $u^{\ast}(s,x) \geq S(s,x)$.\par
	
$\mathbf{Step\ 2.}$ In this step we assume that $\xi=\sum\limits_{i=1}^{N} x_{i} \mathbb{I}_{A_i} \in Lip(\Omega_{s};\mathbb{R}^{n})$, where the constant $x_i \in \mathbb{R}^{n}$ holds for any $i=1,2,\ldots,N$ and $(A_i)_{i=1}^N$ is a $\mathcal{B}(\Omega_{s})$-partition.\par
Note that $ u^{\ast}(s,x_i) \geq S(s,x_i)$ for any $i=1,2,\ldots,N$, we can derive that 
\begin{align}
		 u^{\ast}(s,\xi)
		 =\sum\limits_{i=1}^{N} u^{\ast}(s, x_{i}) \mathbb{I}_{A_i}
		 \geq \sum\limits_{i=1}^{N} S(s,x_i) \mathbb{I}_{A_i}
		 \geq S(s,\xi).\notag
\end{align}\par

$\mathbf{Step\ 3.}$ In this step we assume that the random variable $\xi$ is bounded by some constant $\rho$ and $\xi \in Lip(\Omega_{s};\mathbb{R}^{n})$.\par
Without loss of generality, we can establish a sequence $\{\xi_n \}_{n\geq 1}$ of simple random variables by using $\xi$. More precisely, for each $n \geq 1$, let
\begin{align}
		\xi_n=\sum\limits_{i=-n}^{n} x_i \mathbb{I}_{A_i}(\xi),\notag
\end{align}
with $x_i=\frac{i\rho}{n}$, $A_i =\small\left[\frac{i\rho}{n},\frac{(i+1)\rho}{n}\small\right)$ for $i=-n,-n+1,\ldots,n-1$ and $x_n=\rho$, $A_n=\{\rho \}$, which can imply that $\vert \xi_n-\xi \vert \leq \frac{\rho}{n} $. Note that for each $n \geq 1$, $u^{\ast}(s,\xi_n) \geq S(s,\xi_n)$ holds, by assumption (H3), we get 
\begin{align}
\hat{\mathbb{E}}[\vert S(s,\xi_n)-S(s,\xi)\vert^{2}] \notag
\leq 
L^2\hat{\mathbb{E}}[\vert \xi_n-\xi \vert^{2}]
\leq
\frac{L^2\rho^2}{n^2}.
\end{align}
Given that $\xi_n$ and $\xi$ are bounded by $\rho$, together with Lemma \ref{lemma 4.6}, we have
	\begin{align}
	&\,\hat{\mathbb{E}}[\vert u^{\ast}(s,\xi_n) -u^{\ast}(s,\xi)\vert^{2}] \notag\\\notag
	\leq
	&\,C\hat{\mathbb{E}}[(1+\vert \xi_n \vert^{2m}+\vert \xi\vert^{2m})\vert \xi_n-\xi \vert^{2}]
	+C\hat{\mathbb{E}}[(1+\vert \xi_n \vert^{m+1}+\vert \xi\vert^{m+1})\vert \xi_n-\xi] \vert\notag\\\notag
	\leq
	&\,C(1+\rho^{2m})\frac{\rho^{2}}{n^2}
	+C(1+\rho^{m+1})\frac{\rho}{n},
	\end{align}
where the constant $C$ depends on $L,T,c,\underline{\sigma}$.
Since the constant $n$ is an arbitrary constant, using $u^{\ast}(s,\xi_n) \geq S(s,\xi_n)$ and letting $n\rightarrow \infty$ can yield that $u^{\ast}(s,\xi) \geq S(s,\xi)$ in $L^{2}_{G}(\Omega_{s};\mathbb{R}^{n})$.\par
	
$\mathbf{Step\ 4.}$ In this step we assume that the random variable $\xi \in L^{2m+3}_{G}(\Omega_{s};\mathbb{R}^{n})$.\par
With the help of Exercise 3.10.4 in Peng \cite{Peng3}, there exists a bounded sequence of random variables $\xi_k \in Lip(\Omega_{s};\mathbb{R}^{n})$, such that $\hat{\mathbb{E}}[\vert \xi_k -\xi\vert^{2m+3}] \rightarrow 0$ as $k$ tends to $0$. Note that $u^{\ast}(s,\xi_k) \geq S(s,\xi_k)$ holds for any $k \geq 1$, the aim of the following procedures is to verify that $u^{\ast}(s,\xi_k) \rightarrow u^{\ast}(s,\xi)$ and $S(s,\xi_k) \rightarrow S(s,\xi)$ hold in $L^{2}_{G}(\Omega_{s};\mathbb{R}^{n})$ when $k$ tends to $\infty$.\par
Firstly, for an arbitrary constant $N \,\textgreater\, 0$, by Lemma \ref{lemma 4.6}, we get
	\begin{align}
		&\,\hat{\mathbb{E}}\big[\vert u^{\ast}(s,\xi_k)-u^{\ast}(s,\xi) \vert^{2}\big]\notag\\\notag
		\leq
		&\,C\hat{\mathbb{E}}\big[\vert u^{\ast}(s,\xi_k)-u^{\ast}(s,\xi) \vert^{2}\,\mathbb{I}_{\{ \vert \xi_k \vert \vee \vert \xi \vert \leq N\}}\big]
		+C\hat{\mathbb{E}}\big[\vert u^{\ast}(s,\xi_k)-u^{\ast}(s,\xi) \vert^{2}\,\mathbb{I}_{\{ \vert \xi_k \vert \vee \vert \xi \vert \,\textgreater\, N\}}\big]\notag
	\end{align}
\begin{align}
		\leq
		&\,C\hat{\mathbb{E}}\big[(1+N^{2m})\vert \xi_k-\xi\vert^{2} \,\mathbb{I}_{\{ \vert \xi_k \vert \vee \vert \xi \vert \leq N\}}\big]
		+C\hat{\mathbb{E}}\big[(1+N^{m+1})\vert \xi_k-\xi\vert \,\mathbb{I}_{\{ \vert \xi_k \vert \vee \vert \xi \vert \leq N\}}\big]\notag\\\notag
		&+C\hat{\mathbb{E}}\big[(1+\vert \xi_k\vert^{2m+2}+\vert \xi\vert^{2m+2})\,\mathbb{I}_{\{ \vert \xi_k \vert \vee \vert \xi \vert \,\textgreater\, N\}}\big]\\ \notag
		\leq
		&\,C(1+N^{2m})\hat{\mathbb{E}}\big[\vert \xi_k-\xi\vert^{2} \big]
		+C(1+N^{m+1})\hat{\mathbb{E}}\big[\vert \xi_k-\xi\vert \big]
		+\frac{C}{N}\hat{\mathbb{E}}\big[(1+\vert \xi_k-\xi\vert^{2m+3}+\vert \xi\vert^{2m+3})\big],
	\end{align}
where the constant $C$ depends on $L,T,c,\underline{\sigma}$. Then, letting $k \rightarrow \infty$, we deduce that
	\begin{align}
	\limsup_{k \rightarrow \infty}\hat{\mathbb{E}}\big[\vert u^{\ast}(s,\xi_k)-u^{\ast}(s,\xi) \vert^{2}\big] \notag
	\leq
	\frac{C}{N}\hat{\mathbb{E}}\big[(1+\vert \xi\vert^{2m+3})\big].
    \end{align}
Since the constant $N$ can be arbitrary large, sending $N \rightarrow \infty$, we have
	\begin{align}\label{(4.15)}
		\lim_{k \rightarrow \infty}\hat{\mathbb{E}}\big[\vert u^{\ast}(s,\xi_k)-u^{\ast}(s,\xi) \vert^{2}\big]=0. 
	\end{align}\par
Next, from assumption (H3), it is not difficult to prove that
	\begin{align}\label{(4.16)}
	\lim\limits_{k\rightarrow \infty}\hat{\mathbb{E}}\big[\vert S(s,\xi_k)-S(s,\xi) \vert^{2}\big]
	\leq
	\lim\limits_{k\rightarrow \infty} L^{2}\hat{\mathbb{E}}\big[\vert \xi_k-\xi\vert^{2}\big]
	=0.
    \end{align}
Consequently, \eqref{(4.15)} and \eqref{(4.16)} together with $u^{\ast}(s,\xi_k) \geq S(s,\xi_k)$ can imply the desired result.
\end{proof}\par
Now we start to introduce the so-called stochastic ``backward semigroup'' formulated by Peng \cite{Peng4}, which plays an important role in handling with the problem regarding the viscosity solution of the fully nonlinear PDE \eqref{(4.1)}. For each $(t,x) \in [0,T] \times \mathbb{R}^{n}$, positive real number $\delta \leq T-t$, and $\eta \in L^{p}_{G}(\Omega_{t+\delta})$ with $p \,\textgreater\, 2$, we define
the following stochastic backward semigroup:
\begin{align}\label{(4.17)}
		\mathbb{G}^{\varepsilon,t,x}_{t,t+\delta}[\eta]
		\coloneqq
		Y^{\varepsilon,t,x,\delta}_{t},
\end{align}
where $(Y^{\varepsilon,t,x,\delta},Z^{\varepsilon,t,x,\delta},A^{\varepsilon,t,x,\delta}) \in \mathcal{S}^{2}_{G}(t,t+\delta)$ is the solution of the following
reflected $G$-BSDE with obstacle process $S(s,X^{\varepsilon,t,x}_{s})$:
\begin{equation}\label{(4.18)}
	\begin{aligned}
		Y^{\varepsilon,t,x,\delta}_{s}=&\,\eta+\int_s^{t+\delta} f\Big(\frac{r}{\varepsilon},X^{\varepsilon,t,x}_{r},Y^{\varepsilon,t,x,\delta}_{r},Z^{\varepsilon,t,x,\delta}_{r}\Big)dr
		-\int_s^{t+\delta}Z^{\varepsilon,t,x,\delta}_{r}dB_{r}
		+\big(A^{\varepsilon,t,x,\delta}_{t+\delta}-A^{\varepsilon,t,x,\delta}_{s}\big)\\
		&+\sum\limits_{i,j=1}^d\int_s^{t+\delta} g_{ij}\Big(\frac{r}{\varepsilon},X^{\varepsilon,t,x}_{r},Y^{\varepsilon,t,x,\delta}_{r},Z^{\varepsilon,t,x,\delta}_{r}\Big)d\langle B^{i},B^{j}\rangle_{r}.
    \end{aligned}
\end{equation}\\[-25.68bp]
\begin{lemma}\label{Lemma 4.9}
	Suppose that $\mathrm{(H1)}$-$\mathrm{(H3)}$ hold. Then there exists a sequence $ \varepsilon_{k} \downarrow 0\mathrm{,}$ such that when $u^{\varepsilon_{k}} \rightarrow u^{\ast} \in C([0,T] \times \mathbb{R}^{n})\mathrm{,}$ we can deduce that
		\begin{align}\label{(4.19)}
			\lim\limits_{k \rightarrow \infty} \Big\vert \mathbb{G}^{\varepsilon_k,t,x}_{t,t+\delta}\big[u^{\varepsilon_{k}}(t+\delta,X^{\varepsilon_{k},t,x}_{t+\delta})\big]-\mathbb{G}^{\varepsilon_k,t,x}_{t,t+\delta}\big[u^{\ast}(t+\delta,X^{\varepsilon_{k},t,x}_{t+\delta})\big] \Big\vert =0.	
	    \end{align}
\end{lemma}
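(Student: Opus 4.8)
The plan is to invoke the a priori estimate for reflected $G$-BSDEs (Lemma \ref{lemma 5.5} in the appendix) to reduce \eqref{(4.19)} to an $L^{2}_{G}$-convergence of the terminal data, and then to establish that convergence by a truncation argument resting on the polynomial growth of $u^{\varepsilon_{k}}$ and $u^{\ast}$ together with the moment bound for $X^{\varepsilon_{k},t,x}$. First I would fix the subsequence $\varepsilon_{k}\downarrow 0$ produced in Lemma \ref{lemma 4.6} and observe that $\mathbb{G}^{\varepsilon_{k},t,x}_{t,t+\delta}\big[u^{\varepsilon_{k}}(t+\delta,X^{\varepsilon_{k},t,x}_{t+\delta})\big]$ and $\mathbb{G}^{\varepsilon_{k},t,x}_{t,t+\delta}\big[u^{\ast}(t+\delta,X^{\varepsilon_{k},t,x}_{t+\delta})\big]$ are the time-$t$ values of two reflected $G$-BSDEs on $[t,t+\delta]$ built from the \emph{same} forward process $X^{\varepsilon_{k},t,x}$, hence with the same generator $f\big(\tfrac{\cdot}{\varepsilon_{k}},X^{\varepsilon_{k},t,x}_{\cdot},\cdot,\cdot\big)$ and $g_{ij}\big(\tfrac{\cdot}{\varepsilon_{k}},X^{\varepsilon_{k},t,x}_{\cdot},\cdot,\cdot\big)$ and the same obstacle $S(\cdot,X^{\varepsilon_{k},t,x}_{\cdot})$; they differ only through the terminal conditions $\eta^{1}_{k}:=u^{\varepsilon_{k}}(t+\delta,X^{\varepsilon_{k},t,x}_{t+\delta})$ and $\eta^{2}_{k}:=u^{\ast}(t+\delta,X^{\varepsilon_{k},t,x}_{t+\delta})$. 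Applying Lemma \ref{lemma 5.5} with $s=t$, the generator-difference term and the obstacle-difference term vanish identically, so that, after taking expectations,
\begin{align}
	\Big\vert \mathbb{G}^{\varepsilon_{k},t,x}_{t,t+\delta}[\eta^{1}_{k}]-\mathbb{G}^{\varepsilon_{k},t,x}_{t,t+\delta}[\eta^{2}_{k}] \Big\vert^{2}
	\leq
	C\,\hat{\mathbb{E}}\big[\vert \eta^{1}_{k}-\eta^{2}_{k} \vert^{2}\big],\notag
\end{align}
with $C=C(T,L,c,\underline{\sigma})$; it therefore suffices to show $\hat{\mathbb{E}}\big[\vert u^{\varepsilon_{k}}(t+\delta,X^{\varepsilon_{k},t,x}_{t+\delta})-u^{\ast}(t+\delta,X^{\varepsilon_{k},t,x}_{t+\delta}) \vert^{2}\big]\rightarrow 0$.

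For this I would fix an arbitrary $N>0$ and decompose the expectation over $\{\vert X^{\varepsilon_{k},t,x}_{t+\delta}\vert\leq N\}$ and its complement. On $\{\vert X^{\varepsilon_{k},t,x}_{t+\delta}\vert\leq N\}$ the integrand is dominated by $\big(\sup_{r\in[0,T],\,\vert y\vert\leq N}\vert u^{\varepsilon_{k}}(r,y)-u^{\ast}(r,y)\vert\big)^{2}$, which goes to $0$ as $k\rightarrow\infty$ because $u^{\varepsilon_{k}}\rightarrow u^{\ast}$ uniformly on compact subsets of $[0,T]\times\mathbb{R}^{n}$ (Lemma \ref{lemma 4.6}). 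On the complement, Lemma \ref{lemma 4.4}\,(iii) and Lemma \ref{lemma 4.6}\,(iii) give $\vert u^{\varepsilon_{k}}(t+\delta,X^{\varepsilon_{k},t,x}_{t+\delta})-u^{\ast}(t+\delta,X^{\varepsilon_{k},t,x}_{t+\delta})\vert^{2}\leq C\big(1+\vert X^{\varepsilon_{k},t,x}_{t+\delta}\vert^{2m+2}\big)$; combining this with $\mathbb{I}_{\{\vert X^{\varepsilon_{k},t,x}_{t+\delta}\vert>N\}}\leq \vert X^{\varepsilon_{k},t,x}_{t+\delta}\vert/N$ and the moment estimate $\hat{\mathbb{E}}\big[\vert X^{\varepsilon_{k},t,x}_{t+\delta}\vert^{2m+3}\big]\leq C(1+\vert x\vert^{2m+3})$ of Lemma \ref{lemma 3.1}\,(i) bounds this part by $C(1+\vert x\vert^{2m+3})/N$, uniformly in $k$. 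Letting first $k\rightarrow\infty$ and then $N\rightarrow\infty$ drives the whole expectation to $0$, which yields \eqref{(4.19)}.

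The step I expect to be the main obstacle is exactly this last one: matching the merely \emph{local} uniform convergence $u^{\varepsilon_{k}}\rightarrow u^{\ast}$ against the unboundedness of $X^{\varepsilon_{k},t,x}_{t+\delta}$. The truncation-plus-tail device just sketched — the same one used in Step 4 of Lemma \ref{Lemma 4.8} — is what makes it go through, and it relies crucially on the uniform-in-$k$ polynomial growth bounds of Lemmas \ref{lemma 4.4} and \ref{lemma 4.6} and on the moment bound of Lemma \ref{lemma 3.1}. The only other point that deserves explicit mention is the verification that the generator- and obstacle-difference terms in Lemma \ref{lemma 5.5} genuinely vanish, which is just the observation that the two backward semigroup values are generated along the \emph{same} forward diffusion $X^{\varepsilon_{k},t,x}$ with the \emph{same} obstacle.
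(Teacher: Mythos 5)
Your argument is essentially the paper's own proof: the paper likewise applies Lemma \ref{lemma 5.5} to the two reflected $G$-BSDEs built on the same forward process, same generator and same obstacle (so only the terminal data differ), and then handles $\hat{\mathbb{E}}_{t}\big[\vert u^{\varepsilon_{k}}(t+\delta,X^{\varepsilon_{k},t,x}_{t+\delta})-u^{\ast}(t+\delta,X^{\varepsilon_{k},t,x}_{t+\delta})\vert^{2}\big]$ by exactly your truncation-plus-tail device using Lemmas \ref{lemma 3.1}, \ref{lemma 4.4} and \ref{lemma 4.6}, sending $k\rightarrow\infty$ and then $N\rightarrow\infty$. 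The only detail you leave implicit, which the paper states explicitly, is the preliminary check (via Lemma \ref{Lemma 4.8} and the structure of \eqref{(3.1)}) that both terminal values $u^{\ast}(t+\delta,X^{\varepsilon_{k},t,x}_{t+\delta})$ and $u^{\varepsilon_{k}}(t+\delta,X^{\varepsilon_{k},t,x}_{t+\delta})$ dominate the obstacle $S(t+\delta,X^{\varepsilon_{k},t,x}_{t+\delta})$, so that the two reflected $G$-BSDEs are admissible and Lemma \ref{lemma 5.5} can indeed be invoked.
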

\begin{proof}
	From Lemma \ref{lemma 4.6}, we can directly get that there exists a sequence $ \varepsilon_{k}$ such that $u^{\varepsilon_{k}}$ converges to $u^{\ast} \in C([0,T] \times \mathbb{R}^{n})$ as $ \varepsilon_{k} \downarrow 0$.
Owing to Lemmas \ref{lemma 3.1} and \ref{Lemma 4.8}, it is clear that $X^{\varepsilon_{k},t,x}_{t+\delta}\in L^{p}_{G}(\Omega_{t+\delta})$ holds for any $p \geq 2$, which implies $u^{\ast}(t+\delta,X^{\varepsilon_{k},t,x}_{t+\delta})\geq S(t+\delta,X^{\varepsilon_{k},t,x}_{t+\delta})$. Furthermore, from the structure of the reflected $G$-BSDE with obstacle process $S(s,X^{\varepsilon,t,x}_{s})$ in \eqref{(3.1)}, we obtain that $u^{\varepsilon_{k}}(t+\delta,X^{\varepsilon_k,t,x}_{t+\delta}) \geq S(t+\delta,X^{\varepsilon_{k},t,x}_{t+\delta})$.
Only in this way can we ensure that the terminal values $u^{\varepsilon_{k}}(t+\delta,X^{\varepsilon_k,t,x}_{t+\delta})$ and $u^{\ast}(t+\delta,X^{\varepsilon_{k},t,x}_{t+\delta})$ correspond to the restriction of the lower obstacle.
Thus, by \eqref{(4.18)} and Lemma \ref{lemma 5.5} in appendix, we have
	\begin{equation}\label{(4.20)}
		\begin{aligned}
			&\,\Big\vert \mathbb{G}^{\varepsilon_k,t,x}_{t,t+\delta}\big[u^{\varepsilon_{k}}(t+\delta,X^{\varepsilon_{k},t,x}_{t+\delta})\big]
			-\mathbb{G}^{\varepsilon_k,t,x}_{t,t+\delta}\big[u^{\ast}(t+\delta,X^{\varepsilon_{k},t,x}_{t+\delta})\big] \Big\vert\\
			\leq
			&\,C(L,T,c,\underline{\sigma})\Big(\hat{\mathbb{E}}_{t}\Big[\big\vert u^{\varepsilon_{k}}(t+\delta,X^{\varepsilon_{k},t,x}_{t+\delta})- u^{\ast}(t+\delta,X^{\varepsilon_{k},t,x}_{t+\delta})\big\vert^{2}\Big]\Big)^{\frac{1}{2}}.		
	    \end{aligned}
    \end{equation}
For each $N \,\textgreater\, 0$, by Lemmas \ref{lemma 3.1}, \ref{lemma 4.4} and \ref{lemma 4.6}, we deduce that
	\begin{align}
	&\,\hat{\mathbb{E}}_{t}\Big[\big\vert u^{\varepsilon_{k}}(t+\delta,X^{\varepsilon_{k},t,x}_{t+\delta})- u^{\ast}(t+\delta,X^{\varepsilon_{k},t,x}_{t+\delta})\big\vert^{2}\Big]\notag\\\notag
	\leq
	&\,C\hat{\mathbb{E}}_{t}\bigg[\big\vert u^{\varepsilon_{k}}(t+\delta,X^{\varepsilon_{k},t,x}_{t+\delta})- u^{\ast}(t+\delta,X^{\varepsilon_{k},t,x}_{t+\delta})\big\vert^{2}\,\mathbb{I}_{\big\{\vert X^{\varepsilon_{k},t,x}_{t+\delta} \vert \leq N\big\}}\bigg]\\\notag
	&+C\hat{\mathbb{E}}_{t}\bigg[\big(1+\big\vert X^{\varepsilon_{k},t,x}_{t+\delta}\big\vert^{2m+2}\big)\,\mathbb{I}_{\big\{\vert X^{\varepsilon_{k},t,x}_{t+\delta} \vert \geq N\big\}}\bigg]\\\notag
	\leq
	&\sup_{s \in [0,T] \atop \vert x \vert \leq N }\vert u^{\varepsilon_{k}}(s,x)-u^{\ast}(s,x) \vert^{2}
	+\frac{C}{N}\hat{\mathbb{E}}_{t}\Big[1+\big\vert X^{\varepsilon_{k},t,x}_{t+\delta}\big\vert^{2m+3}\Big]\\\notag
	\leq
	&\sup_{s \in [0,T] \atop \vert x \vert \leq N }\vert u^{\varepsilon_{k}}(s,x)-u^{\ast}(s,x) \vert^{2}
	+\frac{C}{N}(1+\vert x \vert^{2m+3}),
    \end{align}
where the constant $C$ depends on $L,T,c,\underline{\sigma}$.
Thus, letting $k \rightarrow \infty$, we have
\begin{align}
     \limsup_{k \rightarrow \infty}	\hat{\mathbb{E}}_{t}\Big[\big\vert u^{\varepsilon_{k}}(t+\delta,X^{\varepsilon_{k},t,x}_{t+\delta})- u^{\ast}(t+\delta,X^{\varepsilon_{k},t,x}_{t+\delta})\big\vert^{2}\Big]
     \leq
     \frac{C}{N}(1+\vert x \vert^{2m+3}).\notag
 \end{align}
Consequently, it is easy to check that
\begin{align}
\lim\limits_{k \rightarrow \infty}	\hat{\mathbb{E}}_{t}\Big[\big\vert u^{\varepsilon_{k}}(t+\delta,X^{\varepsilon_{k},t,x}_{t+\delta})- u^{\ast}(t+\delta,X^{\varepsilon_{k},t,x}_{t+\delta})\big\vert^{2}\Big]=0,\notag
\end{align}
 as $N$ tends to $\infty$, which together with \eqref{(4.20)} yields the desired result.
\end{proof}\par
Owing to the restriction of the lower obstacle, it will require us to consider whether the terminal state satisfies the lower obstacle of the given stochastic backward semigroup. But in reality, in many cases we can not directly guarantee the validity of this terminal condition based on existing results. To this end, the main purpose of the following discussions is to eliminate the influence of this obstacle process by formulating two auxiliary stochastic backward semigroups. More precisely, we can analyze the properties of auxiliary stochastic backward semigroups to obtain the relevant information regarding the original equation \eqref{(4.18)}.\par
Now we shall present two types of auxiliary stochastic backward semigroups, which are based on the general backward stochastic differential equations.
For each $(t,x) \in [0,T] \times \mathbb{R}^{n}$, positive real number $\delta \leq T-t$, $n \geq 1$ and $\eta_{1},\eta_{2} \in L^{p}_{G}(\Omega_{t+\delta})$ with $p \,\textgreater\, 2$, we define the following stochastic backward semigroups:	\vspace{4pt}
\begin{align}\label{(4.21)}
		\underline{\mathbb{G}}^{\varepsilon,t,x}_{t,t+\delta}[\eta_{1}]
		\coloneqq
		\underline{Y}^{\varepsilon,t,x,\delta}_{t},
		\quad
		\underline{\mathbb{G}}^{n,\varepsilon,t,x}_{t,t+\delta}[\eta_2]
		\coloneqq
		\underline{Y}^{n,\varepsilon,t,x,\delta}_{t},\vspace{4.75bp}
\end{align}
where $(\underline{Y}^{\varepsilon,t,x,\delta},\underline{Z}^{\varepsilon,t,x,\delta},\underline{K}^{\varepsilon,t,x,\delta}) \in \mathfrak{S}^{2}_{G}(t,t+\delta)$ and $(\underline{Y}^{n,\varepsilon,t,x,\delta},\underline{Z}^{n,\varepsilon,t,x,\delta},\underline{K}^{n,\varepsilon,t,x,\delta}) \in \mathfrak{S}^{2}_{G}(t,t+\delta)$ are the unique solution of the following $G$-BSDEs on the time interval $[t,t+\delta]$, respectively:
\begin{equation}
	\begin{aligned}\label{(4.22)}
		\underline{Y}^{\varepsilon,t,x,\delta}_{s}=&\,\eta_{1}+\int_s^{t+\delta} f\Big(\frac{r}{\varepsilon},X^{\varepsilon,t,x}_{r},\underline{Y}^{\varepsilon,t,x,\delta}_{r},\underline{Z}^{\varepsilon,t,x,\delta}_{r}\Big)dr
		-\int_s^{t+\delta}\underline{Z}^{\varepsilon,t,x,\delta}_{r}dB_{r}
		-\big(\underline{K}^{\varepsilon,t,x,\delta}_{t+\delta}-\underline{K}^{\varepsilon,t,x,\delta}_{s}\big)\\
		&+\sum\limits_{i,j=1}^d\int_s^{t+\delta} g_{ij}\Big(\frac{r}{\varepsilon},X^{\varepsilon,t,x}_{r},\underline{Y}^{\varepsilon,t,x,\delta}_{r},Z^{\varepsilon,t,x,\delta}_{r}\Big)d\langle B^{i},B^{j}\rangle_{r},
		\end{aligned}
	\end{equation}
\begin{equation}
	\begin{aligned}\label{(4.23)}
		\underline{Y}^{n,\varepsilon,t,x,\delta}_{s}=&\,\eta_{2}+\int_s^{t+\delta} f\Big(\frac{r}{\varepsilon},X^{\varepsilon,t,x}_{r},\underline{Y}^{n,\varepsilon,t,x,\delta}_{r},\underline{Z}^{n,\varepsilon,t,x,\delta}_{r}\Big)dr
		+\int_s^{t+\delta} n\big(\underline{Y}^{n,\varepsilon,t,x,\delta}_{r}-S(r,X^{\varepsilon,t,x}_{r})\big)^{-}dr\\
		&+\sum\limits_{i,j=1}^d\int_s^{t+\delta} g_{ij}\Big(\frac{r}{\varepsilon},X^{\varepsilon,t,x}_{r},\underline{Y}^{n,\varepsilon,t,x,\delta}_{r},\underline{Z}^{n,\varepsilon,t,x,\delta}_{r}\Big)d\langle B^{i},B^{j}\rangle_{r}
		-\int_s^{t+\delta}\underline{Z}^{n,\varepsilon,t,x,\delta}_{r}dB_{r}\\
		&-\big(\underline{K}^{n,\varepsilon,t,x,\delta}_{t+\delta}-\underline{K}^{n,\varepsilon,t,x,\delta}_{s}\big).\\[3pt]
	\end{aligned}
\end{equation}
Note that the structures of the above $G$-BSDEs in \eqref{(4.22)} and \eqref{(4.23)}, by the comparison theorem of $G$-BSDEs in \cite{HJP2}, it is clear that for each $n \geq 1$ and $\eta \in L^{p}_{G}(\Omega_{t+\delta})$ with $p \,\textgreater\, 2$
\begin{align}\label{(4.24)}
	\underline{\mathbb{G}}^{\varepsilon,t,x}_{t,t+\delta}[\eta]
	\leq
	\underline{\mathbb{G}}^{n,\varepsilon,t,x}_{t,t+\delta}[\eta].
\end{align}
In particular, when $\eta$ also satisfies $\eta \geq S(t+\delta,X^{\varepsilon,t,x}_{t+\delta})$, by applying the approximation method via penalization (see \cite{LPS}), we can deduce that as $n \rightarrow \infty$
\begin{align}\label{(4.25)}
	\underline{\mathbb{G}}^{n,\varepsilon,t,x}_{t,t+\delta}[\eta]
	\uparrow
	\mathbb{G}^{\varepsilon,t,x}_{t,t+\delta}[\eta].
\end{align}
Thus, we can immediately see that 
	\begin{align}\label{(4.26)}
		\underline{\mathbb{G}}^{\varepsilon,t,x}_{t,t+\delta}[\eta]
		\leq
		\mathbb{G}^{\varepsilon,t,x}_{t,t+\delta}[\eta]
    \end{align}
holds for the case $\eta \geq S(t+\delta,X^{\varepsilon,t,x}_{t+\delta}) $ and $\eta \in L^{p}_{G}(\Omega_{t+\delta})$ with $p \,\textgreater\, 2$, which establishes the relation between the stochastic backward semigroups \eqref{(4.17)} and \eqref{(4.21)}. Essentially, it is equivalent to transforming the stochastic backward semigroup \eqref{(4.17)} into the general backward semigroup \eqref{(4.21)}, which ensures that the terminal state is not constrained by the obstacle process, so we can adopt the original methods in \cite{HJW}  to handle with the existence of viscosity supersolution to \eqref{(4.1)}. 

\begin{lemma}\label{Lemma 4.10}
	Suppose that $\mathrm{(H1)}$-$\mathrm{(H3)}$ and $\mathrm{(H5)}$ hold. Then $u^{\ast}$ is the unique viscosity solution to the averaged PDE \eqref{(4.1)}.
\end{lemma}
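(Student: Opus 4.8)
The plan is to show that $u^{\ast}$ is simultaneously a viscosity subsolution and a viscosity supersolution of the averaged obstacle problem \eqref{(4.1)}, and then to obtain uniqueness — hence that $u^{\ast}$ is \emph{the} viscosity solution — from a comparison principle for \eqref{(4.1)}. Two facts are immediate: letting $\varepsilon_{k}\downarrow 0$ in $u^{\varepsilon_{k}}(T,x)=\varphi(x)$ gives the terminal condition $u^{\ast}(T,x)=\varphi(x)$, and Lemma \ref{Lemma 4.8} with constant $\xi=x$ gives $u^{\ast}(t,x)\ge S(t,x)$ for all $(t,x)$. The latter already discharges the obstacle part of the supersolution inequality and makes the subsolution inequality of Definition \ref{definition 4.3} automatic at any point where $u^{\ast}(t,x)=S(t,x)$.

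\textbf{Supersolution.} Fix $(t,x)\in(0,T)\times\mathbb{R}^{n}$ and $(p,q,X)\in\mathcal{P}^{2,-}u^{\ast}(t,x)$, realized by a smooth $\psi$ touching $u^{\ast}$ from below at $(t,x)$ with $(\partial_{t}\psi,D_{x}\psi,D^{2}_{x}\psi)(t,x)=(p,q,X)$. By Lemma \ref{lemma 3.3} one has the dynamic programming identity $u^{\varepsilon_{k}}(t,x)=\mathbb{G}^{\varepsilon_{k},t,x}_{t,t+\delta}\big[u^{\varepsilon_{k}}(t+\delta,X^{\varepsilon_{k},t,x}_{t+\delta})\big]$, and since $u^{\varepsilon_{k}}(t+\delta,X^{\varepsilon_{k},t,x}_{t+\delta})\ge S(t+\delta,X^{\varepsilon_{k},t,x}_{t+\delta})$, estimate \eqref{(4.26)} yields $u^{\varepsilon_{k}}(t,x)\ge\underline{\mathbb{G}}^{\varepsilon_{k},t,x}_{t,t+\delta}\big[u^{\varepsilon_{k}}(t+\delta,X^{\varepsilon_{k},t,x}_{t+\delta})\big]$ with $\underline{\mathbb{G}}$ built from the plain $G$-BSDE \eqref{(4.22)}. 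Using the equicontinuity and polynomial growth of Lemma \ref{lemma 4.6} together with the terminal-data stability of Lemma \ref{Lemma 4.9} (in its version for $\underline{\mathbb{G}}$) and $u^{\varepsilon_{k}}\ge\psi$ near $(t,x)$, one passes to $\underline{\mathbb{G}}^{\varepsilon_{k},t,x}_{t,t+\delta}\big[\psi(t+\delta,X^{\varepsilon_{k},t,x}_{t+\delta})\big]$ up to a vanishing error. From here the argument of \cite{HJW} transfers verbatim: a Taylor expansion of $\psi$ along $X^{\varepsilon_{k},t,x}$ together with the $G$-BSDE a priori estimates identifies the leading-order term as $\delta$ times a Cesàro average of $F(r,x,\psi,D_{x}\psi,D^{2}_{x}\psi)$ which, by (H5), converges to $\bar F(x,\psi,D_{x}\psi,D^{2}_{x}\psi)$; letting $k\to\infty$ and then $\delta\downarrow 0$ gives $-p-\bar F(x,u^{\ast}(t,x),q,X)\ge 0$, so that, with $u^{\ast}(t,x)\ge S(t,x)$, the supersolution inequality holds.

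\textbf{Subsolution.} Fix $(t,x)\in(0,T)\times\mathbb{R}^{n}$ and $(p,q,X)\in\mathcal{P}^{2,+}u^{\ast}(t,x)$. If $u^{\ast}(t,x)=S(t,x)$ there is nothing to prove, so suppose $u^{\ast}(t,x)>S(t,x)$. By continuity of $u^{\ast}$ and $S$ and the equicontinuity of $(u^{\varepsilon_{k}})$ there are $\delta_{0},r_{0}>0$ and $k_{0}$ such that $u^{\varepsilon_{k}}(s,y)>S(s,y)$ whenever $|s-t|\le\delta_{0}$, $|y-x|\le r_{0}$ and $k\ge k_{0}$. For $\delta\le\delta_{0}$ put $\tau_{k}:=\inf\{s\ge t:|X^{\varepsilon_{k},t,x}_{s}-x|\ge r_{0}\}\wedge(t+\delta)$. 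On $[t,\tau_{k}]$ one has $Y^{\varepsilon_{k},t,x}_{s}=u^{\varepsilon_{k}}(s,X^{\varepsilon_{k},t,x}_{s})>S(s,X^{\varepsilon_{k},t,x}_{s})$, so by the Skorohod-type property (iii) following Lemma \ref{lemma 3.1} the nondecreasing part $A^{\varepsilon_{k},t,x}$ is frozen there; hence the reflected $G$-BSDE reduces on $[t,\tau_{k}]$ to a plain $G$-BSDE with terminal value $u^{\varepsilon_{k}}(\tau_{k},X^{\varepsilon_{k},t,x}_{\tau_{k}})$, i.e. $u^{\varepsilon_{k}}(t,x)=\underline{\mathbb{G}}^{\varepsilon_{k},t,x}_{t,\tau_{k}}\big[u^{\varepsilon_{k}}(\tau_{k},X^{\varepsilon_{k},t,x}_{\tau_{k}})\big]$. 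With the obstacle removed on the stopped interval one runs the subsolution half of \cite{HJW}: take a smooth $\psi\ge u^{\ast}$ touching at $(t,x)$, Taylor-expand along $X^{\varepsilon_{k},t,x}$ up to $\tau_{k}$ (the contribution of $\{\tau_{k}<t+\delta\}$ being $o(\delta)$ by the moment bounds of Lemma \ref{lemma 3.1}), invoke the a priori estimates and (H5), and let $k\to\infty$, $\delta\downarrow 0$ to obtain $-p-\bar F(x,u^{\ast}(t,x),q,X)\le 0$; thus $\min\big(u^{\ast}(t,x)-S(t,x),-p-\bar F(x,u^{\ast}(t,x),q,X)\big)\le 0$.

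\textbf{Uniqueness.} The two steps show that $u^{\ast}\in C([0,T]\times\mathbb{R}^{n})$ is a viscosity solution of \eqref{(4.1)} of polynomial growth (Lemma \ref{lemma 4.6}). Since $G$ is sublinear and non-degenerate and $b,\sigma,f,g_{ij}$ satisfy (H1)--(H2), the coefficient $\bar F$ from (H5) is degenerate elliptic and inherits the Lipschitz/monotonicity/growth structure in $(x,v,p,A)$ required for comparison, so the comparison principle for obstacle problems of the form \eqref{(4.1)} (as in \cite{LPS}, using the sub-/super-jet technique of \cite{CIP}) applies: a viscosity subsolution of polynomial growth lies below any viscosity supersolution of polynomial growth. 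Hence \eqref{(4.1)} has at most one viscosity solution of this class, which must be $u^{\ast}$. I expect the main obstacle to be the subsolution step — rigorously justifying, in the quasi-sure sense, that $A^{\varepsilon_{k},t,x}$ is frozen on $[t,\tau_{k}]$ so that the reflected equation genuinely degenerates to a plain $G$-BSDE there, after which the oscillating-coefficient averaging of \cite{HJW} can be transplanted; a secondary point is verifying that $\bar F$, defined only as the Cesàro limit in (H5), satisfies the hypotheses of the comparison theorem.
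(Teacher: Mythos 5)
Your supersolution step follows the paper's route (reduce the reflected semigroup to the plain $G$-BSDE semigroup via the penalization comparison \eqref{(4.24)}--\eqref{(4.26)} and then run the argument of \cite{HJW}), and that part is sound modulo the standard global-test-function technicalities. The genuine gap is in your subsolution step. Its crux is the claim that on $[t,\tau_k]$, where $Y^{\varepsilon_k,t,x}_s>S(s,X^{\varepsilon_k,t,x}_s)$, ``the nondecreasing part $A^{\varepsilon_k,t,x}$ is frozen'', so that the reflected equation degenerates to a plain $G$-BSDE. This is false in the $G$-framework: condition (iii) after Lemma \ref{lemma 3.1} is not the Skorohod condition $\int (Y-S)\,dA=0$ but only the requirement that $-\int_t^{\cdot}(Y_r-S_r)\,dA_r$ be a \emph{nonincreasing $G$-martingale}, and nonincreasing $G$-martingales are not zero. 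Indeed $A$ carries, besides the reflection, the analogue of the process $-K$ of the unreflected $G$-BSDE: if the obstacle is so low that it is never active, the reflected solution is exactly the unreflected one with $A=-K$, which is nondecreasing and non-constant even though $Y\gg S$ everywhere. So flatness of $A$ cannot be inferred where $Y>S$, and your identity $u^{\varepsilon_k}(t,x)=\underline{\mathbb{G}}^{\varepsilon_k,t,x}_{t,\tau_k}[u^{\varepsilon_k}(\tau_k,X^{\varepsilon_k,t,x}_{\tau_k})]$ does not follow. A second, independent obstruction is the localization itself: the exit time $\tau_k$ is not in the paper's toolbox, since conditional $G$-expectations at stopping times, $G$-BSDEs on random horizons, and membership of stopped processes/indicators $\mathbb{I}_{[t,\tau_k]}$ in $M^p_G$, $S^p_G$ (quasi-continuity) are all unavailable; the paper deliberately avoids any stopping argument.

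The paper's subsolution proof goes the other way: because $\phi^{1}\geq u^{\ast}\geq S$ (Lemma \ref{Lemma 4.8}), the terminal value $\phi^{1}(t+\delta,X^{\varepsilon_k,t,x}_{t+\delta})$ is admissible for the \emph{reflected} semigroup, so one keeps \eqref{(4.18)}, subtracts the test function, checks that the transformed triple solves a reflected $G$-BSDE with the explicit nonpositive obstacle $S_1=S-\phi^{1}$, compares it with a frozen-coefficient reflected $G$-BSDE via the stability estimate of \cite{L} (Lemma 3.5 there), and observes that the frozen equation reduces to an ODE because its reflection is inactive (using Lemma 3.4 of \cite{HJP1}); the averaging assumption (H5) then yields $-\partial_t\phi^{1}-\bar F\leq 0$ without any case distinction on whether $u^{\ast}(t,x)=S(t,x)$. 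If you want to salvage your route you would have to prove a genuine $G$-analogue of ``$A$ is flat where $Y>S$'' and a stopping-time theory for reflected $G$-BSDEs, neither of which exists here. Finally, note that the uniqueness you invoke is not automatic from \cite{LPS}: one must verify that the Ces\`aro-limit coefficient $\bar F$ of (H5) is proper/degenerate elliptic with the right Lipschitz-in-$v$ and growth structure; the paper does this in detail in Lemma \ref{lemma 5.6}, whereas your sketch only asserts it.
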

\begin{proof}
The proof regarding the uniqueness of viscosity solution to \eqref{(4.1)} can refer to Lemma \ref{lemma 5.6} in appendix.
Given that the restriction of obstacle process, we need to conduct the discussions on the existence of the viscosity solution from two different perspectives.\par
	
$\mathbf{Step\ 1.}$ In this step we show that $u^{\ast}$ is a viscosity supersolution of PDE \eqref{(4.1)}. Without loss of generality, assume that $h_{ij}=g_{ij}=0,\,i,j=1,2,\ldots,d$, since the other cases can be discussed in a similar method. For each fixed $(t,x)\in[0,T] \times \mathbb{R}^{n}$, we assume that the test function $\phi \in C^{2,3}([0,T] \times \mathbb{R}^{n})$ satisfies that $\phi \leq u^{\ast}$, $\phi(t,x) = u^{\ast}(t,x)$ and all partial derivatives of order from 1 to 3 are bounded by $C(L,T)(1+\vert x \vert ^{m})$. \par
In fact, from Lemma \ref{Lemma 4.8}, $\phi(t,x)=u^{\ast}(t,x) \geq S(t,x)$ holds, so we only need to verify that
\begin{align}\label{(4.27)}
		-\partial_{t}\phi(t,x)-\bar{F}(x,\phi(t,x),D_{x}\phi(t,x),D^{2}_{x}\phi(t,x)) \geq 0.
\end{align}
Since we can not ensure that $\phi(t+\delta,X^{\varepsilon_{k},t,x}_{t+\delta}) \geq S(t+\delta,X^{\varepsilon_{k},t,x}_{t+\delta})$ is valid, the method in \cite{HJW} is no longer applicable. More precisely, the stochastic backward semigroup with the terminal value $\phi(t+\delta,X^{\varepsilon_{k},t,x}_{t+\delta})$ in \eqref{(4.17)} can not be utilized directly. Thus, we need to draw support from \eqref{(4.21)}-\eqref{(4.26)} to transform \eqref{(4.17)} into the second stochastic backward semigroup in \eqref{(4.21)}. The main procedures are divided into two parts.\par 
$\mathbf{Part\ 1}\,$(Dynamic programming principle). According to Lemma \ref{lemma 4.6}, there exists a subsequence $u^{\varepsilon_{k}}$, such that $u^{\varepsilon _{k}}$ uniformly converges to $u^{\ast} \in C([0,T] \times \mathbb{R}^{n})$ on each compact subset of $[0,T] \times \mathbb{R}^{n}$. Recalling Lemma \ref{lemma 3.3}, for each small enough $\delta \in (0,1)$ and $k \geq 1$, we have
	\begin{align}
		u^{\varepsilon_{k}}(t,x)\notag
		=
		\mathbb{G}^{\varepsilon_{k},t,x}_{t,t+\delta}\big[Y^{\varepsilon_{k},t,x}_{t+\delta}\big]
		=
		\mathbb{G}^{\varepsilon_{k},t,x}_{t,t+\delta}\big[u^{\varepsilon_{k}}(t+\delta,X^{\varepsilon_{k},t,x}_{t+\delta})\big].
	\end{align}
By means of Lemma \ref{Lemma 4.9}, it is easy to derive that
	\begin{align}\label{(4.28)}
		u^{\ast}(t,x)
		=
		\lim\limits_{k \rightarrow \infty}\mathbb{G}^{\varepsilon_{k},t,x}_{t,t+\delta}\big[u^{\ast}(t+\delta,X^{\varepsilon_{k},t,x}_{t+\delta})\big].
	\end{align}
Note that $\underline{\mathbb{G}}^{\varepsilon_{k},t,x}_{t,t+\delta}\big[u^{\ast}(t+\delta,X^{\varepsilon_{k},t,x}_{t+\delta})\big] \geq \underline{\mathbb{G}}^{\varepsilon_{k},t,x}_{t,t+\delta}\big[\phi(t+\delta,X^{\varepsilon_{k},t,x}_{t+\delta})\big]$, by \eqref{(4.26)} and Lemma \ref{Lemma 4.8}, we obtain that  
\begin{align}
	\mathbb{G}^{\varepsilon_{k},t,x}_{t,t+\delta}\big[u^{\ast}(t+\delta,X^{\varepsilon_{k},t,x}_{t+\delta})\big] 
	\geq \underline{\mathbb{G}}^{\varepsilon_{k},t,x}_{t,t+\delta}\big[u^{\ast}(t+\delta,X^{\varepsilon_{k},t,x}_{t+\delta})\big]\notag
	\geq
	\underline{\mathbb{G}}^{\varepsilon_{k},t,x}_{t,t+\delta}\big[\phi(t+\delta,X^{\varepsilon_{k},t,x}_{t+\delta})\big],
\end{align}
which together with \eqref{(4.28)} can imply that
\begin{align}\label{(4.29)}
		\phi(t,x)
		=u^{\ast}(t,x) 
		\geq 
		\limsup_{k \rightarrow \infty}\underline{\mathbb{G}}^{\varepsilon_{k},t,x}_{t,t+\delta}\big[\phi(t+\delta,X^{\varepsilon_{k},t,x}_{t+\delta})\big].
\end{align}\par
$\mathbf{Part\ 2}\,$(Approximating). For each $k \geq 1$, denote by $(\widetilde{Y}^{\varepsilon_{k}},\widetilde{Z}^{\varepsilon_{k}},\widetilde{K}^{\varepsilon_{k}})$ the solution of $G$-BSDE \eqref{(4.22)} with terminal condition $\phi(t+\delta,X^{\varepsilon_{k},t,x}_{t+\delta})$ on the time interval $[t,t+\delta]$. For each $s \in [t,t+\delta] $, we define 
	\begin{align}
		\widehat{Y}^{\varepsilon_{k}}_s=\widetilde{Y}^{\varepsilon_{k}}_s-\phi(s,X^{\varepsilon_{k},t,x}_{s}),\quad
		\widehat{Z}^{\varepsilon_{k}}_s=\widetilde{Z}^{\varepsilon_{k}}_s-D_x\phi(s,X^{\varepsilon_{k},t,x}_{s})\sigma(\tfrac{s}{\varepsilon_{k}},X^{\varepsilon_{k},t,x}_{s}),\quad
		\widehat{K}^{\varepsilon_{k}}_s=\widetilde{K}^{\varepsilon_{k}}_s.\notag
 \end{align}
Applying $G$-It\^o's formula to $\widetilde{Y}^{\varepsilon_{k}}_s-\phi(s,X^{\varepsilon_{k},t,x}_{s})$, we can derive that $(\widehat{Y}^{\varepsilon_{k}},\widehat{Z}^{\varepsilon_{k}},\widehat{K}^{\varepsilon_{k}})$ satisfies the following $G$-BSDE:
	\begin{align}
		\widehat{Y}^{\varepsilon_k}_{s}=&\,\int_s^{t+\delta}\Big[f^{\varepsilon_k}\big(r,X^{\varepsilon_k,t,x}_{r},\widehat{Y}^{\varepsilon_k}_{r},\widehat{Z}^{\varepsilon_k}_{r}\big)+\partial_{t}\phi\big(r,X^{\varepsilon_k,t,x}_{r}\big)+D_x\phi\big(r,X^{\varepsilon_k,t,x}_{r}\big)b^{\varepsilon_k}\big(r,X^{\varepsilon_k,t,x}_{r}\big)\Big]dr \notag\\\notag
        &-\int_s^{t+\delta}\widehat{Z}^{\varepsilon_k}_{r}dB_{r}
        +\frac{1}{2}\int_s^{t+\delta} \sigma^{\varepsilon_k}\big(r,X^{\varepsilon_k,t,x}_{r}\mathbf{\big)}^\top D^{2}_x\phi\big(r,X^{\varepsilon_k,t,x}_{r}\big) \sigma^{\varepsilon_k}\big(r,X^{\varepsilon_k,t,x}_{r}\big)d\langle B\rangle_{r}
        -\big(\widehat{K}^{\varepsilon_k}_{t+\delta}-\widehat{K}^{\varepsilon_k}_{s}\big),
  \end{align}
where $f^{\varepsilon_k}(r,x,y,z)=f(\frac{r}{\varepsilon_k},x,y+\phi(r,x),z+D_x\phi(r,x)\sigma(\frac{r}{\varepsilon_k},x))$, $b^{\varepsilon_k}(r,x)=b(\frac{r}{\varepsilon_k},x)$ and $\sigma^{\varepsilon_k}(r,x)=\sigma(\frac{r}{\varepsilon_k},x)$.
Then, we can establish the following approximating $G$-BSDE on the time interval $[t,t+\delta]$:
\begin{equation}
	\begin{aligned}
		\bar{Y}^{\varepsilon_k}_{s}=&\,\int_s^{t+\delta}\Big[\bar{f}^{\varepsilon_k}\big(r,x,\bar{Y}^{\varepsilon_k}_{r},\bar{Z}^{\varepsilon_k}_{r}\big)+\partial_{t}\phi(t,x)+D_x\phi(t,x)b^{\varepsilon_k}(r,x)\Big]dr \notag\\
		&-\int_s^{t+\delta}\bar{Z}^{\varepsilon_k}_{r}dB_{r}
		+\frac{1}{2}\int_s^{t+\delta} \sigma^{\varepsilon_k}(r,x\mathbf{)}^\top D^{2}_x\phi(t,x) \sigma^{\varepsilon_k}(r,x)d\langle B\rangle_{r}
		-(\bar{K}^{\varepsilon_k}_{t+\delta}-\bar{K}^{\varepsilon_k}_{s}),
	\end{aligned}
\end{equation}
where $\bar{f}^{\varepsilon_k}(r,x',y,z)=f(\frac{r}{\varepsilon_k},x',y+\phi(t,x),z+D_x\phi(t,x)\sigma(\frac{r}{\varepsilon_k},x'))$. The remaining procedures are similar to the proof of Lemma 4.6 in \cite{HJW}, so we omit it. Consequently, we have
	\begin{align}
		-\partial_{t}\phi(t,x)-\bar{F}(x,\phi(t,x),D_{x}\phi(t,x),D^{2}_{x}\phi(t,x)) \geq 0,\notag
	\end{align}
which directly implies $u^{\ast}$ is a viscosity supersolution of \eqref{(4.1)}.\par

$\mathbf{Step\ 2.}$ In this step we demonstrate that $u^{\ast}$ is a viscosity subsolution of \eqref{(4.1)}. We define a test function $\phi^{1}$ with similar settings to $\phi$. In fact, we assume that the test function $\phi^{1} \in C^{2,3}([0,T] \times \mathbb{R}^{n})$ satisfies that $\phi^1 \geq u^{\ast}$, $\phi^{1}(t,x) = u^{\ast}(t,x)$ and all partial derivatives of order from 1 to 3 are bounded by $C(L,T)(1+\vert x \vert ^{m})$. And also, we still consider the case where $h_{ij}=g_{ij}=0,\,i,j=1,2,\ldots,d$. The main proof is divided into the following three parts.\par
$\mathbf{Part\ 1\,}$(Dynamic programming principle). From Lemma \ref{Lemma 4.8}, we can obtain that $\phi^1(t+\delta,X^{\varepsilon_{k},t,x}_{t+\delta}) \geq S(t+\delta,X^{\varepsilon_{k},t,x}_{t+\delta})$. In addition, by a similar analysis as \eqref{(4.28)} and \eqref{(4.29)}, we can immediately conclude that
\begin{align}\label{(4.30)}
	\phi^{1}(t,x)
	 =u^{\ast}(t,x) 
	 \leq 
	 \limsup_{k \rightarrow \infty}\mathbb{G}^{\varepsilon_{k},t,x}_{t,t+\delta}\big[\phi^{1}(t+\delta,X^{\varepsilon_{k},t,x}_{t+\delta})\big].
 \end{align}\par
$\mathbf{Part\ 2}\,$(Approximating). For each $k \geq 1$, we denote by $(\widetilde{Y}^{1,\varepsilon_{k}},\widetilde{Z}^{1,\varepsilon_{k}},\widetilde{A}^{1,\varepsilon_{k}})$ the solution to the reflected $G$-BSDE \eqref{(4.18)} with terminal condition $\phi^{1}(t+\delta,X^{\varepsilon_{k},t,x}_{t+\delta})$ on the time interval $[t,t+\delta]$. For each $s \in [t,t+\delta] $, we define 
	\begin{align}
		\widehat{Y}^{1,\varepsilon_{k}}_s=\widetilde{Y}^{1,\varepsilon_{k}}_s-\phi^{1}(s,X^{\varepsilon_{k},t,x}_{s}),\ \,
		\widehat{Z}^{1,\varepsilon_{k}}_s=\widetilde{Z}^{1,\varepsilon_{k}}_s-D_x\phi^{1}(s,X^{\varepsilon_{k},t,x}_{s})\sigma(\tfrac{s}{\varepsilon_{k}},X^{\varepsilon_{k},t,x}_{s}),\ \,
		\widehat{A}^{1,\varepsilon_{k}}_s=\widetilde{A}^{1,\varepsilon_{k}}_s.\notag
	\end{align}
Applying $G$-It\^o's formula to $\widetilde{Y}^{1,\varepsilon_{k}}_s-\phi^{1}(s,X^{\varepsilon_{k},t,x}_{s})$, we can immediately see that $(\widehat{Y}^{1,\varepsilon_{k}},\widehat{Z}^{1,\varepsilon_{k}},\widehat{A}^{1,\varepsilon_{k}})$ satisfies the following reflected $G$-BSDE:
\begin{equation}\label{(4.31)}
	\begin{aligned}
		\widehat{Y}^{1,\varepsilon_k}_{s}=&\,\int_s^{t+\delta}\Big[f^{1,\varepsilon_k}\big(r,X^{\varepsilon_k,t,x}_{r},\widehat{Y}^{1,\varepsilon_k}_{r},\widehat{Z}^{1,\varepsilon_k}_{r}\big)+\partial_{t}\phi^{1}\big(r,X^{\varepsilon_k,t,x}_{r}\big)+D_x\phi^{1}\big(r,X^{\varepsilon_k,t,x}_{r}\big)b^{\varepsilon_k}\big(r,X^{\varepsilon_k,t,x}_{r}\big)\Big]dr \\
		&+\frac{1}{2}\int_s^{t+\delta} \sigma^{\varepsilon_k}\big(r,X^{\varepsilon_k,t,x}_{r}\mathbf{\big)}^\top D^{2}_x\phi^{1}\big(r,X^{\varepsilon_k,t,x}_{r}\big) \sigma^{\varepsilon_k}\big(r,X^{\varepsilon_k,t,x}_{r}\big)d\langle B\rangle_{r}\\
		&-\int_s^{t+\delta}\widehat{Z}^{1,\varepsilon_k}_{r}dB_{r}
		+\big(\widehat{A}^{1,\varepsilon_k}_{t+\delta}-\widehat{A}^{1,\varepsilon_k}_{s}\big),
	\end{aligned}
\end{equation}
where the coefficients $f^{1,\varepsilon_k}(r,x,y,z)=f(\frac{r}{\varepsilon_k},x,y+\phi^{1}(r,x),z+D_x\phi^{1}(r,x)\sigma(\frac{r}{\varepsilon_k},x))$, $b^{\varepsilon_k}(r,x)=b(\frac{r}{\varepsilon_k},x)$ and $\sigma^{\varepsilon_k}(r,x)=\sigma(\frac{r}{\varepsilon_k},x)$.
Next, we construct the following approximating reflected $G$-BSDE on the time interval $[t,t+\delta]$:
\begin{equation}\label{(4.32)}
	\begin{aligned}
		\bar{Y}^{1,\varepsilon_k}_{s}=&\,\int_s^{t+\delta}\Big[\bar{f}^{1,\varepsilon_k}\big(r,x,\bar{Y}^{1,\varepsilon_k}_{r},\bar{Z}^{1,\varepsilon_k}_{r}\big)+\partial_{t}\phi^{1}(t,x)+D_x\phi^{1}(t,x)b^{\varepsilon_k}(r,x)\Big]dr \\
		&-\int_s^{t+\delta}\bar{Z}^{1,\varepsilon_k}_{r}dB_{r}
		+\frac{1}{2}\int_s^{t+\delta} \sigma^{\varepsilon_k}(r,x\mathbf{)}^\top D^{2}_x\phi^{1}(t,x) \sigma^{\varepsilon_k}(r,x)d\langle B\rangle_{r}
		+(\bar{A}^{1,\varepsilon_k}_{t+\delta}-\bar{A}^{1,\varepsilon_k}_{s}),
	\end{aligned}
\end{equation}
where $\bar{f}^{1,\varepsilon_k}(r,x',y,z)=f(\frac{r}{\varepsilon_k},x',y+\phi^{1}(t,x),z+D_x\phi^{1}(t,x)\sigma(\frac{r}{\varepsilon_k},x'))$.\par
Under the structures of the two equations given above, the main purpose of the following discussions is to ensure that \eqref{(4.31)} and \eqref{(4.32)} correspond to the setting of the reflected $G$-BSDE. To be more precise, we need to establish two obstacle processes, which really belong to \eqref{(4.31)} and \eqref{(4.32)}, respectively. The specific structures of two lower obstacle processes provide a convenient and significant way to cope with estimation of the difference between the solutions of \eqref{(4.31)} and \eqref{(4.32)}.\par
As for $\widehat{Y}^{1,\varepsilon_k}$, by Lemma \ref{Lemma 4.8} and the definition of $u^{\ast}$, it is not hard to get that
\begin{align}\label{(4.33)}
	\phi^{1}(s,X^{\varepsilon_k,t,x}_{s})
	\geq
	u^{\ast}(s,X^{\varepsilon_k,t,x}_{s})
	\geq
	S(s,X^{\varepsilon_k,t,x}_{s}),\quad \forall s \in[t,t+\delta].
\end{align}
Thus we shall formulate the following new frameworks:
\begin{align}
		&\hspace{0.5em}S_{1}(s,x') \coloneqq S(s,x')-\phi^{1}(s,x'),\quad\forall (s,x') \in [t,t+\delta] \times \mathbb{R}^{n},\notag\\\notag
		L^{1,\varepsilon_k,t,x}_{s} &\coloneqq S_{1}(s,X^{\varepsilon_k,t,x}_{s})= S(s,X^{\varepsilon_k,t,x}_{s})-\phi^{1}(s,X^{\varepsilon_k,t,x}_{s}), \quad \forall s \in [t,t+\delta].
\end{align}\par
It is clear that $S_{1}(s,X^{\varepsilon_k,t,x}_{s}) \leq 0$ for any $s \in [t,t+\delta]$, which indicates that $0=\widehat{Y}^{1,\varepsilon_k}_{t+\delta} \geq S_{1}(t+\delta,X^{\varepsilon_k,t,x}_{t+\delta})$. In addition, note that $\{L^{1,\varepsilon_k,t,x}_{s}\}_{s \in [t,t+\delta]} \in S^{p}_{G}(t,t+\delta)$, by Lemma \ref{lemma 5.1} in appendix, so \eqref{(4.31)} can be considered as a reflected $G$-BSDE with obstacle process $\{L^{1,\varepsilon_k,t,x}_{s}\}_{s \in [t,t+\delta]}$. Moreover, \eqref{(4.31)} also satisfies the following statements:\\
${}\hspace{0.40em}$(i) $\widehat{Y}^{1,\varepsilon_k}_{s} \geq S_{1}(s,X^{\varepsilon_k,t,x}_{s})$ for any $s \in [t,t+\delta]$;\vspace{2pt}\\
${}\,$(ii) $\big\{-\int\nolimits_{t}^{s} (\widehat{Y}^{1,\varepsilon_k}_{r} - S_{1}(r,X^{\varepsilon_k,t,x}_{r})) d\widehat{A}^{1,\varepsilon_k}_{r} \big\}_{s \in [t,t+\delta]}$ is a nonincreasing $G$-martingale.\vspace{3pt}\par
In fact, the above statements can be directly obtained by using the structure of $\widetilde{Y}^{1,\varepsilon_k}_{r}$ and the reflected $G$-BSDE in \eqref{(4.18)}. To be more precise, $\{-\int\nolimits_{t}^{s} (\widetilde{Y}^{1,\varepsilon_k}_{r}- S(r,X^{\varepsilon_k,t,x}_{r})) d\widetilde{A}^{1,\varepsilon_k}_{r}\}_{s \in [t,t+\delta]}$ is a nonincreasing $G$-martingale. And also, it is clear that
\begin{align}
	\widehat{Y}^{1,\varepsilon_k}_{s}- S_{1}(s,X^{\varepsilon_k,t,x}_{s})
	&=
	\widetilde{Y}^{1,\varepsilon_k}_{s}- S(s,X^{\varepsilon_k,t,x}_{s})
	\geq 0,\notag\\
	-\int\nolimits_{t}^{s} (\widehat{Y}^{1,\varepsilon_k}_{r} - S_{1}(r,X^{\varepsilon_k,t,x}_{r})) d\widehat{A}^{1,\varepsilon_k}_{r}
	&=
	-\int\nolimits_{t}^{s} (\widetilde{Y}^{1,\varepsilon_k}_{r}- S(r,X^{\varepsilon_k,t,x}_{r})) d\widetilde{A}^{1,\varepsilon_k}_{r}.\notag
\end{align}
That is to say, if we consider the above statements by using the equation \eqref{(4.18)} instead of the information that \eqref{(4.31)} is a reflected $G$-BSDE, we can also ensure the correctness of the above statements (i)-(ii). Thus, no matter from which perspective we discuss it, it can indicate that the obstacle process $L^{1,\varepsilon_k,t,x}_{s}$ we established is really reasonable, the equation \eqref{(4.31)} is indeed a reflected $G$-BSDE with the obstacle process $S_{1}(s,X^{\varepsilon_k,t,x}_{s})$.\par Similarly, as for $\bar{Y}^{1,\varepsilon_k}$, fixing a point $x \in \mathbb{R}^{n}$, we set
\begin{align}
	L^{2,\varepsilon_k,t,x}_{s}
	\coloneqq
	S_1(s,x)
	=
	S(s,x)-\phi^{1}(s,x), \quad \forall s \in [t,t+\delta].\notag
\end{align}
In fact, $L^{2,\varepsilon_k,t,x}_{s}=S_{1}(s,x)$ is a deterministic and continuous function regarding the argument $s$. Furthermore, using $\phi^{1} \geq u^{\ast}$ and Lemma \ref{Lemma 4.8} can yield that 
\begin{equation}\label{(4.34)}
	\begin{aligned}
	\phi^{1}(s,x)
	&\geq
	u^{\ast}(s,x)
	\geq	 
	S(s,x), \quad \forall s \in [t,t+\delta],\\
	S_{1}(t+\delta,x)
	&=S(t+\delta,x)-\phi^{1}(t+\delta,x)
	\leq 0=\bar{Y}^{1,\varepsilon_k}_{t+\delta},
	\end{aligned}
\end{equation}
which together with Lemma \ref{lemma 5.1} in appendix can illustrate that \eqref{(4.32)} is recognized as a reflected $G$-BSDE with the obstacle process $\{L^{2,\varepsilon_k,t,x}_{s}\}_{s \in [t,t+\delta]}$. In addition, it is easy to obtain that\\
${}\hspace{0.40em}$(i) $\bar{Y}^{1,\varepsilon_k}_{s} \geq S_{1}(s,x)$ for any $s \in [t,t+\delta]$;\vspace{2pt}\\
${}\,$(ii) $\{-\int\nolimits_{t}^{s} (\bar{Y}^{1,\varepsilon_k}_{r} - S_{1}(r,x)) d\bar{A}^{1,\varepsilon_k}_{r} \}_{s \in [t,t+\delta]}$ is a nonincreasing $G$-martingale.\par

Next we present some estimates regarding the coefficients of \eqref{(4.31)} and \eqref{(4.32)}. By assumptions (H1)-(H2), one can check
that 
\begin{align}
	\Gamma^{1}_r=&\,\big\vert f^{1,\varepsilon_k}(r,X^{\varepsilon_k,t,x}_{r} ,\bar{Y}^{1,\varepsilon_k}_{r},\bar{Z}^{1,\varepsilon_k}_{r})
	-\bar{f}^{1,\varepsilon_k}(r,x,\bar{Y}^{1,\varepsilon_k}_{r},\bar{Z}^{1,\varepsilon_k}_{r})\big\vert \notag\\\label{(4.35)}
	\leq
	&\,C(L,T)(1+\vert x \vert^{m+1}+\vert X^{\varepsilon_k,t,x}_{r} \vert^{m+1})(\vert r-t \vert+\vert X^{\varepsilon_k,t,x}_{r}-x\vert),\\ 
	\Gamma^{2}_r=&\,\big\vert \partial_{t}\phi^{1}(r,X^{\varepsilon_k,t,x}_{r})
	+D_x \phi^{1}(r,X^{\varepsilon_k,t,x}_{r})b^{\varepsilon_k}(r,X^{\varepsilon_k,t,x}_{r})-\partial_{t}\phi^{1}(t,x)-D_x \phi^{1}(t,x)b^{\varepsilon_k}(r,x)\big\vert \notag\\\label{(4.36)}
	\leq
	&\,C(L,T)(1+\vert x \vert^{m+1}+\vert X^{\varepsilon_k,t,x}_{r} \vert^{m+1})(\vert r-t \vert+\vert X^{\varepsilon_k,t,x}_{r}-x\vert),\\
	\Gamma^{3}_r=&\,\big\vert \sigma^{\varepsilon_k}(r,X^{\varepsilon_k,t,x}_{r}\mathbf{)}^\top
	D^{2}_x \phi^{1}(r,X^{\varepsilon_k,t,x}_{r})\sigma^{\varepsilon_k}(r,X^{\varepsilon_k,t,x}_{r})
	-\sigma^{\varepsilon_k}(r,x\mathbf{)}^\top
	D^{2}_x \phi^{1}(t,x)\sigma^{\varepsilon_k}(r,x)\big\vert \notag\\\label{(4.37)}
	\leq
	&\,C(L,T)(1+\vert x \vert^{m+2}+\vert X^{\varepsilon_k,t,x}_{r} \vert^{m+2})(\vert r-t \vert+\vert X^{\varepsilon_k,t,x}_{r}-x\vert).
\end{align}
Thus, we can immediately derive that assumptions (H1)-(H2) in \cite{L} are weaker than those made for the coefficients in this paper. In particular, by \eqref{(4.33)}-\eqref{(4.34)}, one can choose $\xi=0$ and $I_{s}=0$ for each $s \in [t,t+\delta]$, which are proposed in assumption (H3) of \cite{L}. Then, the coefficients of \eqref{(4.31)} and \eqref{(4.32)} correspond to assumption (H3) in \cite{L}. Thus, by Lemma 3.5 in \cite{L}, for any $r \in [t,t+\delta]$, we obtain the following estimate:
\begin{equation}\label{(4.38)}
	\begin{aligned}
 	&\ \hat{\mathbb{E}}\Big[\big\vert \widehat{Y}^{1,\varepsilon_k}_{r}-\bar{Y}^{1,\varepsilon_k}_{r} \big\vert^{2}\Big]\\
	\leq
	&\,C\hat{\mathbb{E}}\bigg[\sup_{s \in [t,t+\delta]}\hat{\mathbb{E}}_{s}\bigg[ \bigg(\int\nolimits_{t}^{t+\delta} \big(\Gamma^{1}_r+\Gamma^{2}_r+\Gamma^{3}_r \big) dr \bigg)^{3}     \bigg]            \bigg]\\
	&+C\bigg(\hat{\mathbb{E}}\bigg[\sup_{s \in [t,t+\delta]}\hat{\mathbb{E}}_{s}\bigg[ \bigg(\int\nolimits_{t}^{t+\delta} \big(\Gamma^{1}_r+\Gamma^{2}_r+\Gamma^{3}_r \big) dr \bigg)^{3}     \bigg]            \bigg]\bigg)^{\frac{2}{3}},
    \end{aligned}
\end{equation}
where the constant $C$ depends on $T,\underline{\sigma},\bar{\sigma},L$. Moreover, we only need to deal with the first term on the right side of the inequality \eqref{(4.38)}, since the second term can be researched in a similar method. By \eqref{(4.35)}-\eqref{(4.37)}, we have
\begin{align}\label{(4.39)}
	{}\hspace{-2.5em}C\hat{\mathbb{E}}\bigg[\sup_{s \in [t,t+\delta]}\hat{\mathbb{E}}_{s}\bigg[ \bigg(\int\nolimits_{t}^{t+\delta} \big(\Gamma^{1}_r+\Gamma^{2}_r+\Gamma^{3}_r \big) dr \bigg)^{3}     \bigg] \bigg]\notag
\end{align}
\begin{align}
	\hspace{5.2em}\leq
	&\,C\delta^{3}\hat{\mathbb{E}}\bigg[\sup_{s \in [t,t+\delta]}\hat{\mathbb{E}}_{s}\bigg[ \bigg(\int\nolimits_{t}^{t+\delta} 
	\big(1+\vert x \vert^{m+2}+\vert X^{\varepsilon_k,t,x}_{r} \vert^{m+2}\big)   dr \bigg)^{3}     \bigg]            \bigg]\notag\\\notag
	&+C\hat{\mathbb{E}}\bigg[\sup_{s \in [t,t+\delta]}\hat{\mathbb{E}}_{s}\bigg[ \bigg(\int\nolimits_{t}^{t+\delta} 
	\big(1+\vert x \vert^{m+2}+\vert X^{\varepsilon_k,t,x}_{r} \vert^{m+2}\big)\vert X^{\varepsilon_k,t,x}_{r}-x\vert   dr \bigg)^{3}     \bigg]            \bigg]\\
	\coloneqq
	&\,M_{1}+M_{2}.
\end{align}
As for $M_1$, applying Lemma \ref{lemma 3.1} and \eqref{(3.6)}, we have
\begin{equation}\label{(4.40)}
	\begin{aligned}
	M_1
	=&\,C\delta^{3}\hat{\mathbb{E}}\bigg[\sup_{s \in [t,t+\delta]}\hat{\mathbb{E}}_{s}\bigg[ \bigg(\int\nolimits_{t}^{t+\delta} 
	\big(1+\vert x \vert^{m+2}+\vert X^{\varepsilon_k,t,x}_{r} \vert^{m+2}\big)   dr \bigg)^{3}     \bigg]            \bigg]\\
	\leq
	&\,C\delta^{6}\hat{\mathbb{E}}\bigg[\sup_{r \in [t,t+\delta]}
	\Big(1+\vert x \vert^{3m+6}+\vert X^{\varepsilon_k,t,x}_{r} \vert^{3m+6}\Big)         \bigg]\\
	&+C\delta^{6}\hat{\mathbb{E}}\bigg[\sup_{s \in [t,t+\delta]}\hat{\mathbb{E}}_{s}\bigg[ \sup_{r \in [s,t+\delta]}
	\Big(1+\vert x \vert^{3m+6}+\big\vert X^{\varepsilon_k,s,X^{\varepsilon_k,t,x}_{s}}_{r} \big\vert^{3m+6}\Big)       \bigg]            \bigg]\\
	\leq
	&\,C\delta^{6}(1+\vert x \vert^{3m+6})
	+C\delta^{6}\hat{\mathbb{E}}\bigg[\sup_{s \in [t,t+\delta]}
	\Big(1+\vert x \vert^{3m+6}+\vert X^{\varepsilon_k,t,x}_{s} \vert^{3m+6}\Big)            \bigg]\\
	\leq
	&\,C\delta^{6}(1+\vert x \vert^{3m+6}).
    \end{aligned}
\end{equation}
As for $M_2$, given that $X^{\varepsilon_k,t,x}_{r}=X^{\varepsilon_k,s,X^{\varepsilon_k,t,x}_{s}}_{r}$ holds  for any $r \in [s,t+\delta]$, again with H{\"o}lder inequality and Lemma \ref{lemma 3.1}, it is easy to get
\begin{align}
	\hat{\mathbb{E}}_{s}\bigg[\sup_{r\in [s,t+\delta]}\big\vert X^{\varepsilon_k,t,x}_{r}-x\big\vert ^6\bigg]\notag
	\leq
	&\,\hat{\mathbb{E}}_{s}\bigg[\sup_{r\in [s,t+\delta]}\big\vert X^{\varepsilon_k,s,X^{\varepsilon_k,t,x}_{s}}_{r}-X^{\varepsilon_k,t,x}_{s}+X^{\varepsilon_k,t,x}_{s}-x\big\vert ^6\bigg] \notag\\\notag
	\leq
	&\,C\Big[\big(1+\big\vert X^{\varepsilon_k,t,x}_{s}\big\vert^{6}\big)\delta^{3}   + \big\vert X^{\varepsilon_k,t,x}_{s}-x\big\vert^{6} \Big].
\end{align}
And also, it is clear to see that
\begin{equation}\label{(4.41)}
	\begin{aligned}
	M_2
	=&\,C\hat{\mathbb{E}}\bigg[\sup_{s \in [t,t+\delta]}\hat{\mathbb{E}}_{s}\bigg[ \bigg(\int\nolimits_{t}^{t+\delta} 
	\big(1+\vert x \vert^{m+2}+\vert X^{\varepsilon_k,t,x}_{r} \vert^{m+2}\big)\vert X^{\varepsilon_k,t,x}_{r}-x\vert   dr \bigg)^{3}     \bigg]            \bigg]\\
	\leq
	&\,C\delta^{3}\bigg(\hat{\mathbb{E}}\bigg[\sup_{r \in [t,t+\delta]}
	\Big(1+\vert x \vert^{6m+12}+\vert X^{\varepsilon_k,t,x}_{r} \vert^{6m+12}\Big)  \bigg]\bigg)^\frac{1}{2}
	\bigg(\hat{\mathbb{E}}\bigg[\sup_{r \in [t,t+\delta]}\big\vert X^{\varepsilon_k,t,x}_{r}-x\big\vert^{6}           \bigg]\bigg)^\frac{1}{2}\\
	&+C\delta^{3}\hat{\mathbb{E}}\bigg[\sup_{s \in [t,t+\delta]}\hat{\mathbb{E}}_{s}\bigg[ \sup_{r\in [s,t+\delta]} 
	\big(1+\vert x \vert^{3m+6}+\vert X^{\varepsilon_k,t,x}_{r} \vert^{3m+6}\big)\sup_{r\in [s,t+\delta]}\big\vert X^{\varepsilon_k,s,X^{\varepsilon_k,t,x}_{s}}_{r}-x\big\vert ^3\bigg]       \bigg]\\
	\leq		
    &\,C\delta^{\frac{9}{2}}(1+\vert x \vert^{3m+9})\\
    &+C\delta^{3}\hat{\mathbb{E}}\bigg[\sup_{s \in [t,t+\delta]}\Big(1+\vert x \vert^{3m+6}+\vert X^{\varepsilon_k,t,x}_{s} \vert^{3m+6}\Big)
    \Big(  
    \big(1+\big\vert X^{\varepsilon_k,t,x}_{s}\big\vert^{6}\big)\delta^{3}   + \big\vert X^{\varepsilon_k,t,x}_{s}-x\big\vert^{6}  \Big)^\frac{1}{2} \bigg]\\
    \leq	
    &\,C\delta^{\frac{9}{2}}(1+\vert x \vert^{3m+9})\\
    &+C\delta^{3}(1+\vert x \vert^{3m+6})
    \bigg(\delta^{3}  \hat{\mathbb{E}}\bigg[\sup_{s \in [t,t+\delta]}  
    \Big(1+\big\vert X^{\varepsilon_k,t,x}_{s}\big\vert^{6}\Big) \bigg]+\hat{\mathbb{E}}\bigg[\sup_{s \in [t,t+\delta]}\big\vert X^{\varepsilon_k,t,x}_{s}-x\big\vert^{6}\bigg]\bigg)^\frac{1}{2} \\
    \leq
    &\,C\delta^{\frac{9}{2}}(1+\vert x \vert^{3m+9}).
    \end{aligned}
\end{equation}
Thus, \eqref{(4.38)}-\eqref{(4.41)} can yield that
	\begin{align}
	C\hat{\mathbb{E}}\bigg[\sup_{s \in [t,t+\delta]}\hat{\mathbb{E}}_{s}\bigg[ \bigg(\int\nolimits_{t}^{t+\delta} \big(\Gamma^{1}_r+\Gamma^{2}_r+\Gamma^{3}_r \big) dr \bigg)^{3}     \bigg]            \bigg]
	\leq 
	C\delta^{\frac{9}{2}}(1+\vert x \vert^{3m+9}),\notag
    \end{align}
which implies that 
\begin{align}\label{(4.42)}
	\hat{\mathbb{E}}\Big[\big\vert \widehat{Y}^{1,\varepsilon_k}_{r}-\bar{Y}^{1,\varepsilon_k}_{r} \big\vert \Big]
	\leq
	C(T,\underline{\sigma},\bar{\sigma},L)\delta^{\frac{3}{2}}\big(1+\vert x \vert^{\frac{3m+9}{2}}\big),
	\quad \forall r \in [t,t+\delta].
\end{align}\par
On the other hand, we can directly see that $\bar{Z}^{1,\varepsilon_k}_{s}=0$ and $\bar{Y}^{1,\varepsilon_k}_{s}$ is the solution of the following ODE on the time interval $[t,t+\delta]$:
\begin{equation}\label{(4.43)}
	\begin{aligned}
	\bar{Y}^{1,\varepsilon_k}_{s}
	=&\int_s^{t+\delta}\big[\bar{f}^{1,\varepsilon_k}\big(r,x,\bar{Y}^{1,\varepsilon_k}_{r},0\big)+\partial_{t}\phi^{1}(t,x)+D_x\phi^{1}(t,x)b^{\varepsilon_k}(r,x)\big]dr \\
	&+\int_s^{t+\delta}  G\big(\sigma^{\varepsilon_k}(r,x\mathbf{)}^\top D^{2}_x\phi^{1}(t,x) \sigma^{\varepsilon_k}(r,x)\big) dr,
    \end{aligned}
\end{equation}
and		
	\begin{align}
	\bar{A}^{1,\varepsilon_k}_{s}
	=
	\int_t^{s}  G\big(\sigma^{\varepsilon_k}(r,x\mathbf{)}^\top D^{2}_x\phi^{1}(t,x) \sigma^{\varepsilon_k}(r,x)\big)    dr
    -\frac{1}{2}\int_t^{s} \sigma^{\varepsilon_k}(r,x\mathbf{)}^\top D^{2}_x\phi^{1}(t,x) \sigma^{\varepsilon_k}(r,x)d\langle B\rangle_{r}.\notag
    \end{align}
It is easy to check that $\{-\bar{A}^{1,\varepsilon_k}_{s}\}_{s \in [t,t+\delta]  }$ is a nonincreasing $G$-martingale.\par

And also, note that $\bar{Y}^{1,\varepsilon_k}_{r} \geq S_1(r,x)$ for each $r \in [t,t+\delta]$, by Lemma 3.4 in Hu et al. \cite{HJP1}, we can immediately demonstrate that $\{-\int\nolimits_{t}^{s}(\bar{Y}^{1,\varepsilon_k}_{r} - S_1(r,x) )d\bar{A}^{1,\varepsilon_k}_{r} \}_{s \in [t,t+\delta]  }$ is a nonincreasing $G$-martingale, which implies that the structure of ODE \eqref{(4.43)} is valid.
Thus, combining \eqref{(4.42)}-\eqref{(4.43)}, we have
\begin{equation}\label{(4.44)}
	\begin{aligned}
	\hat{\mathbb{E}}\Big[\widehat{Y}^{1,\varepsilon_k}_{t}\Big]
	\leq
	&\,\hat{\mathbb{E}}\Big[\big\vert\widehat{Y}^{1,\varepsilon_k}_{t}-\bar{Y}^{1,\varepsilon_k}_{t}\big\vert\Big]
	+\hat{\mathbb{E}}\big[\bar{Y}^{1,\varepsilon_k}_{t}\big]\\
	\leq
	&\,C(T,\underline{\sigma},\bar{\sigma},L)\delta^{\frac{3}{2}}\big(1+\vert x \vert^{\frac{3m+9}{2}}\big)
	+\hat{\mathbb{E}}\bigg[\int_t^{t+\delta}\bar{f}^{1,\varepsilon_k}\big(r,x,\bar{Y}^{1,\varepsilon_k}_{r},0\big)    dr\bigg]\\
	&+\hat{\mathbb{E}}\bigg[\int_t^{t+\delta}\Big(\partial_{t}\phi^{1}(t,x)+D_x\phi^{1}(t,x)b^{\varepsilon_k}(r,x)+G\big(\sigma^{\varepsilon_k}(r,x\mathbf{)}^\top D^{2}_x\phi^{1}(t,x) \sigma^{\varepsilon_k}(r,x)\big)\Big)dr
	\bigg].
    \end{aligned}
\end{equation}
Then, we can check that $X \leq \vert X\vert \textbf{I}_d$ and $G(X) \leq \frac{1}{2}\bar{\sigma}^{2}d\vert X\vert$  for each matrix $X \in \mathbb{S}(d)$, where the matrix norm $\vert X \vert \coloneqq \sqrt{tr\{XX^{\top}\}} $. Thus, it is clear that
\begin{align}
	 G\big(\sigma^{\varepsilon_k}(u,x\mathbf{)}^\top D^{2}_x\phi^{1}(t,x) \sigma^{\varepsilon_k}(u,x)\big)
	\leq
	\frac{\bar{\sigma}^2d}{2}\big\vert \sigma^{\varepsilon_k}(u,x\mathbf{)}^\top D^{2}_x\phi^{1}(t,x)\sigma^{\varepsilon_k}(u,x) \big\vert^{2}.\notag
\end{align}
In addition, from Remark \ref{remark 5.3} in appendix, we note that the lower obstacle $S_{1}(s,x)$ of $G$-BSDE \eqref{(4.32)} is bounded by $c=0$ from above, which together with assumptions (H1)-(H2) can indicate that
\begin{align}
	\vert \bar{Y}^{1,\varepsilon_k}_{r} \vert^{2}
	\leq
	&\,C(T,L,\underline{\sigma})\hat{\mathbb{E}}_r\bigg[   \int_r^{t+\delta}\big\vert \bar{f}^{1,\varepsilon_k}(u,x,0,0)+\partial_{t}\phi^{1}(t,x)+D_x\phi^{1}(t,x)b^{\varepsilon_k}(u,x)  \big\vert^{2}  du \bigg] \notag\\
	&+C(T,L,\underline{\sigma})\hat{\mathbb{E}}_r\bigg[\int_r^{t+\delta}  \frac{\bar{\sigma}^2d}{2}\big\vert \sigma^{\varepsilon_k}(u,x\mathbf{)}^\top D^{2}_x\phi^{1}(t,x) \sigma^{\varepsilon_k}(u,x) \big\vert^{2}   du\bigg]\notag\\\notag
	 \leq
	 &\,C(T,L,\underline{\sigma},\bar{\sigma},d)\delta(1+\vert x \vert^{2m+4}),
\end{align}
and
\begin{align}\label{(4.45)}
	\big\vert \bar{f}^{1,\varepsilon_k}\big(r,x,\bar{Y}^{1,\varepsilon_k}_{r},0\big)
	-\bar{f}^{1,\varepsilon_k}(r,x,0,0)  \big\vert
	\leq 
	&\,C(T,L,\underline{\sigma},\bar{\sigma},d)\delta^{\frac{1}{2}}(1+\vert x \vert^{m+2}),\quad \forall r \in [t,t+\delta].
\end{align}
Thus, by \eqref{(4.44)}-\eqref{(4.45)}, we can derive that
\begin{align}\label{(4.46)}
	\hat{\mathbb{E}}\Big[ \widehat{Y}^{1,\varepsilon_k}_{t}   \Big]
	\leq
	&\int_t^{t+\delta}\Big[\partial_{t}\phi^{1}(t,x)+\bar{f}^{1,\varepsilon_k}(r,x,0,0)+D_x\phi^{1}(t,x)b^{\varepsilon_k}(r,x)+G\big(\sigma^{\varepsilon_k}(r,x\mathbf{)}^\top D^{2}_x\phi^{1}(t,x) \sigma^{\varepsilon_k}(r,x)\big)\Big]dr\notag\\
	&+C(T,L,\underline{\sigma},\bar{\sigma})\delta^{\frac{3}{2}}\big(1+\vert x \vert^{\frac{3m+9}{2}}\big)
	+C(T,L,\underline{\sigma},\bar{\sigma},d)\delta^{\frac{3}{2}}(1+\vert x \vert^{m+2}).
    \end{align}\par
Owing to \eqref{(4.30)}, for a fixed $\delta$, and for an arbitrary positive real number $\epsilon$ we can always choose a large enough integer $k$, such that 
\begin{align}
     \mathbb{G}^{\varepsilon_{k},t,x}_{t,t+\delta}\big[\phi^{1}\big(t+\delta,X^{\varepsilon_{k},t,x}_{t+\delta}\big)\big]
     \,\textgreater\,
     \phi^{1}(t,x)-\epsilon.\notag
\end{align}
Without loss of generality, for each $n \geq 1$, letting $\epsilon_{n}=\frac{1}{2^{n}}$, there exists a decreasing subsequence $\{\varepsilon_{k_n}\}_{n} \downarrow 0$ of $\{\varepsilon_{k}\}_{k}$, such that
	\begin{align}\label{(4.47)}
		\mathbb{G}^{\varepsilon_{k_n},t,x}_{t,t+\delta}\big[\phi^{1}\big(t+\delta,X^{\varepsilon_{k_n},t,x}_{t+\delta}\big)\big]
		\,\textgreater\,
		\phi^{1}(t,x)-\frac{1}{2^{n}},\quad
		\forall n \geq 1.
	\end{align}
Thus, by \eqref{(4.46)}-\eqref{(4.47)} and $\widehat{Y}^{1,\varepsilon_k}_{t}=\mathbb{G}^{\varepsilon_{k},t,x}_{t,t+\delta}\big[\phi^{1}\big(t+\delta,X^{\varepsilon_{k},t,x}_{t+\delta}\big)\big]-\phi^{1}(t,x)$, we obtain that for this fixed $\delta$
\begin{align}
		&\,\int_t^{t+\delta}\Big[\partial_{t}\phi^{1}(t,x)+\bar{f}^{1,\varepsilon_{k_{n}}}(r,x,0,0)+D_x\phi^{1}(t,x)b^{\varepsilon_{k_{n}}}(r,x)+G\big(\sigma^{\varepsilon_{k_{n}}}(r,x \mathbf{)}^\top D^{2}_x\phi^{1}(t,x) \sigma^{\varepsilon_{k_{n}}}(r,x)\big)\Big]dr\notag\\\label{(4.48)}
		\geq
		&-C\delta^{\frac{3}{2}}\big(1+\vert x \vert^{\frac{3m+9}{2}}\big)
		-\frac{1}{2^{n}}, \quad 
		\forall n \geq 1.
\end{align}\par
$\mathbf{Part\ 3\,}$(Asymptotic analysis).
By means of the variable transformation method, recalling the definition of the coefficients $\bar{f}^{1,\varepsilon_{k_{n}}}, \sigma^{\varepsilon_{k_{n}}},b^{\varepsilon_{k_{n}}}$, we have
\begin{align}
	&\,\delta\partial_{t}\phi^{1}(t,x)\notag\\\label{(4.49)}
	&+\varepsilon_{k_{n}}\int_{\frac{t}{\varepsilon_{k_{n}}}}^{\frac{t+\delta}{\varepsilon_{k_{n}}}}\Big[f\big(r,x,\phi^{1}(t,x),D_x\phi^{1}(t,x) \sigma(r,x)\big)+D_x\phi^{1}(t,x)b(r,x)+G\big(\sigma(r,x \mathbf{)}^\top D^{2}_x\phi^{1}(t,x) \sigma(r,x)\big)\Big]dr\notag\\
	\geq
	&-C\delta^{\frac{3}{2}}\big(1+\vert x \vert^{\frac{3m+9}{2}}\big)
	-\frac{1}{2^{n}}, \quad 
	\forall n \geq 1.
\end{align}
And also, from assumption (H5), we can derive that
\begin{align}\label{(4.50)}
	&\lim_{n \rightarrow \infty}\varepsilon_{k_{n}}\int_{\frac{t}{\varepsilon_{k_{n}}}}^{\frac{t+\delta}{\varepsilon_{k_{n}}}}\Big[f\big(r,x,\phi^{1}(t,x),D_x\phi^{1}(t,x) \sigma(r,x)\big)+D_x\phi^{1}(t,x)b(r,x)+G\big(\sigma(r,x \mathbf{)}^\top D^{2}_x\phi^{1}(t,x) \sigma(r,x)\big)\Big]dr\notag\\
	 =&
	\,\delta \bar{F}\big(x,\phi^{1}(t,x),D_x\phi^{1}(t,x),D^{2}_x\phi^{1}(t,x)\big).
\end{align}
Furthermore, for this fixed $\delta$, with aid of \eqref{(4.49)}-\eqref{(4.50)}, letting $n \rightarrow \infty$, we get
\begin{align}\label{(4.51)}
	\,\partial_{t}\phi^{1}(t,x)
	+\bar{F}\big(x,\phi^{1}(t,x),D_x\phi^{1}(t,x),D^{2}_x\phi^{1}(t,x)\big)
	\geq
	-C\delta^{\frac{1}{2}}\big(1+\vert x \vert^{\frac{3m+9}{2}}\big). 
\end{align}\par
Likewise, for any $\delta \in (0,1)$, we can always  deduce the results similar to \eqref{(4.47)}-\eqref{(4.50)}, which indicates that \eqref{(4.51)} still holds. Thus, sending $\delta \rightarrow 0$, we get
\begin{align}
	\partial_{t}\phi^{1}(t,x)\notag
	+\bar{F}\big(x,\phi^{1}(t,x),D_x\phi^{1}(t,x),D^{2}_x\phi^{1}(t,x)\big)
	\geq
	0,
\end{align}
which implies the desired result.
\end{proof}\par
Now we shall formulate the proof of Theorem \ref{Theorem 4.7}.\\
\begin{proof of Theorem 4.7}
	The proof now follows from the uniqueness of the averaged PDE \eqref{(4.1)}. In fact, by means of Lemmas \ref{lemma 4.6} and \ref{Lemma 4.10}, for any sequence $(\varepsilon_{l})_{l \geq 1}$ converges to $0$, we always find a subsequence $(\varepsilon_{l_k})_{k \geq 1}$ of $(\varepsilon_{l})_{l \geq 1}$, such that $u^{\varepsilon_{l_k}} \rightarrow u^{\ast}$ as $k \rightarrow \infty$, where $u^{\ast}$ is a viscosity solution of the averaged PDE \eqref{(4.1)}. In addition, note that the averaged PDE \eqref{(4.1)} admits a unique viscosity solution, thus we derive that $ \bar{u} \equiv u^{\ast}$, which completes the proof of Theorem \ref{Theorem 4.7}.
\end{proof of Theorem 4.7}

\section{Appendix}\label{section 5}

\noindent In what follows, we shall recall some basic results about reflected $G$-BSDEs for reader's convenience. The following processes are established under the condition $d=1$, but the results and methods still hold for the case $d \,\textgreater\, 1$. We are given the following data: the generators $f$ and $g$, the obstacle process $\{L_t\}_{t \in [0,T]}$ and the terminal value $\xi$, where $f$ and $g$ are maps $f(t,\omega,y,z),\,g(t,\omega,y,z)$: $[0,T] \times \Omega_{T} \times \mathbb{R} \times \mathbb{R} \rightarrow \mathbb{R}$. \par
We firstly present some assumptions regarding the coefficients of the reflected $G$-BSDE: there exists some $\beta \,\textgreater\, 2$ such that\\
\textbf{($\mathbf{A1}$)} for any $y,z$, $f(\cdot,\cdot,y,z),\,g(\cdot,\cdot,y,z) \in M^{\beta}_{G}(0,T)$;\\
\textbf{($\mathbf{A2}$)} There exists a constant $L\,\textgreater\,0$ such that, for any $t \in [0,T]$ and $y,\,y'$, $z,z' \in \mathbb{R}$,
\begin{equation}
	\vert f(t,\omega,y,z)-f(t,\omega,y',z')\vert+\vert g(t,\omega,y,z)-g(t,\omega,y',z')\vert \leq L(\vert y-y'\vert+\vert z-z'\vert); \notag
\end{equation}
\textbf{($\mathbf{A3}$)} $\xi \in L^{\beta}_{G}(\Omega_T)$ and $\xi \geq L_T$ q.s.;\\
\textbf{($\mathbf{A4}$)} There exists a constant $c$ such that $(L_{t})_{t \in [0,T]} \in S^{\beta}_{G}(0,T)$ and $L_{t} \leq c$ for each $t \in [0,T]$;\\
\textbf{($\mathbf{A5}$)} $(L_{t})_{t \in [0,T]}$ has the following form:
\begin{equation}
	L_t=L_0+\int\nolimits_{0}^{t} b(s)ds+\int\nolimits_{0}^{t} l(s)d\langle B\rangle_{s}+\int\nolimits_{0}^{t} \sigma(s)dB_s,\notag
\end{equation}
where $\{b(t)\}_{t\in [0,T]}$ and $\{l(t)\}_{t\in [0,T]}$ belong to $M^{\beta}_{G}(0,T)$ and $\{\sigma(t)\}_{t\in [0,T]}$ belongs to $H^{\beta}_{G}(0,T)$.\par
For some $1  \,\textless\, \alpha \leq \beta $, there exists a unique triple $(Y,Z,A) \in \mathcal{S}^{2}_{G}(0,T)$ under the above assumptions, which solves the following reflected $G$-BSDE with a lower obstacle on the time interval $[0,T]$. And also, $(Y,Z,A)$ also satisfies the following properties:\\
${}\hspace{0.22em}$ (i) $(Y,Z,A) \in \mathcal{S}^{\alpha}_{G}(0,T)$, $Y_t \geq L_t, 0 \leq t \leq T$;\vspace{1.5pt}\\
${}\hspace{0.001em}$ (ii) $Y_{t}=\xi+\int_t^{T} f(s,Y_{s},Z_{s})ds+\int_t^{T} g(s,Y_{s},Z_{s})d\langle B\rangle_{s}-\int_t^{T}Z_{s} dB_{s}+(A_{T}-A_{t})$;\vspace{1pt}\\
(iii) $\{A_{t}\}_{t\in[0,T]}$ is continuous nondecreasing, and $\{-\int_0^{t} (Y_{s}-L_{s})dA_{s}\}_{t \in [0,T]}$ is a nonincreasing $G$-martingale.\\
For simplicity, we denote by $\mathcal{S}^{\alpha}_{G}(0,T)$ the collection of $(Y,Z,A)$ such that $Y \in S^{\alpha}_{G}(0,T),\,Z \in H^{\alpha}_{G}(0,T),\,A \in S^{\alpha}_{G}(0,T)$ and $A_{0}=0$.\par
Now we shall give the existence and uniqueness of the solution to the above reflected $G$-BSDE with a lower obstacle $\{L_t\}_{t\in[0,T]}$.
\begin{lemma}\label{lemma 5.1}
  Suppose that $\xi,\,f\,$and $g$ satisfy $\mathrm{(A1)}$-$\mathrm{(A3)}\mathrm{,}$ $L$ satisfies $\mathrm{(A4)}$ or $\mathrm{(A5)}$. Then$\mathrm{,}$ the reflected $G$-BSDE with data $(\xi,f,g,L)$ admits a unique solution $(Y,Z,A)$. In addition$\mathrm{,}$ for any $2 \leq \alpha \,\textless\, \beta\mathrm{,}$ we have $Y\in S^{\alpha}_{G}(0,T),\,Z \in H^{\alpha}_{G}(0,T)$ and $A \in S^{\alpha}_{G}(0,T)$.
\end{lemma}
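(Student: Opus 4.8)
The strategy follows the classical pattern for reflected BSDEs — penalization for existence, a priori estimates for uniqueness — but each step must be carried out inside the nonlinear $G$-expectation calculus. When $L$ satisfies (A5) rather than (A4), I would first reduce to the case (A4): the change of unknown $\bar{Y}_t:=Y_t-L_t$ turns the problem into a reflected $G$-BSDE whose obstacle is $\bar{L}\equiv 0\le c$ and whose generators and terminal value are modified by the coefficients $b,l,\sigma$ of $L$, which lie in the appropriate spaces by (A5). So I concentrate on (A4).

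\emph{Step 1 (penalized $G$-BSDEs and uniform estimates).} For each $n\ge 1$ let $(Y^n,Z^n,K^n)$ solve the $G$-BSDE
\begin{equation}
Y^n_t=\xi+\int_t^T f(s,Y^n_s,Z^n_s)\,ds+\int_t^T g(s,Y^n_s,Z^n_s)\,d\langle B\rangle_s+n\int_t^T(Y^n_s-L_s)^-\,ds-\int_t^T Z^n_s\,dB_s-(K^n_T-K^n_s),\notag
\end{equation}
which is well posed by (A1)--(A4) and the $G$-BSDE theory of Hu et al. \cite{HJP1}; set $A^n_t:=n\int_0^t(Y^n_s-L_s)^-\,ds$. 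The comparison theorem of \cite{HJP2} gives $Y^n\le Y^{n+1}$ q.s. Comparing $Y^n$ from below with the solution of the unreflected $G$-BSDE with data $(\xi,f,g)$ and from above with an explicit supersolution built from $c$ and the Lipschitz constants, and applying $G$-It\^o's formula to $|Y^n|^2$ to isolate the penalization term, I would obtain, for some fixed $2<\alpha<\beta$, a bound $\sup_n\big(\|Y^n\|_{S^\alpha_G}+\|Z^n\|_{H^\alpha_G}+\|A^n_T\|_{L^\alpha_G}\big)\le C$ with $C$ depending only on the data.

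\emph{Step 2 (passage to the limit).} By monotone convergence put $Y:=\lim_n Y^n$; the uniform $S^\alpha_G$-bound together with dominated convergence in the $G$-setting yield $Y\in S^\alpha_G(0,T)$ and $Y^n\to Y$ in $M^2_G(0,T)$. Applying $G$-It\^o to $|Y^n-Y^m|^2$ and absorbing the penalization difference through the uniform estimate of Step 1 shows that $(Z^n)$ is Cauchy in $H^\alpha_G(0,T)$, so $Z^n\to Z$; then $A^n$ converges (uniformly, along a subsequence) to a continuous nondecreasing process $A$ with $A_0=0$ and $A\in S^\alpha_G(0,T)$, and passing to the limit in the penalized equation gives property (ii). Since $\hat{\mathbb{E}}\big[\int_0^T(Y^n_s-L_s)^-\,ds\big]=\tfrac1n\hat{\mathbb{E}}[A^n_T]\to 0$, the uniform continuity estimates force $Y_t\ge L_t$ for every $t$, which is property (i).

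\emph{Step 3 (minimality and uniqueness).} The delicate point — the step I expect to be the main obstacle — is the minimality condition: that $\{-\int_0^t(Y_s-L_s)\,dA_s\}_{t\in[0,T]}$ is a nonincreasing $G$-martingale. Because in the $G$-framework this quantity need not be identically zero, one cannot simply pass to the limit in $-\int_0^t(Y^n_s-L_s)\,dA^n_s$; instead I would identify the $\hat{\mathbb{E}}_t$-dynamics of the limit using the characterisation of nonincreasing $G$-martingales together with the convergences of Step 2, where once more the regularity of $L$ (and, in the (A5) case, the reduction above) is essential. Uniqueness then follows from a priori estimates: for two solutions $(Y,Z,A)$ and $(Y',Z',A')$, apply $G$-It\^o's formula to $|Y_t-Y'_t|^2$ and control the cross terms $\int(Y_s-Y'_s)\,d(A_s-A'_s)$ via the obstacle inequalities $Y\ge L$, $Y'\ge L$ and the nonincreasing $G$-martingale property of each $-\int_0^\cdot(Y_s-L_s)\,dA_s$, concluding $Y=Y'$ and hence $Z=Z'$, $A=A'$. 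Finally, the claimed $S^\alpha_G$/$H^\alpha_G$-regularity for every $2\le\alpha<\beta$ is precisely what the uniform estimates of Step 1, set up with a general $\alpha$, deliver in the limit.
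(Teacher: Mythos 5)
First, note what the paper actually does here: it gives no proof of this lemma at all; the appendix states ``For the proof of Lemmas 5.1--5.2, 5.4--5.5 and Remark 5.3, one can refer to \cite{LPS}.'' Your penalization-for-existence plus a-priori-estimates-for-uniqueness strategy is precisely the strategy of \cite{LPS} (as the paper's introduction itself recalls), so the route is the right one; the problem is that your sketch is thinnest exactly where the real work in \cite{LPS} lies. In Step 2 you pass from the monotone q.s.\ limit $Y=\lim_n Y^n$ to $Y\in S^{\alpha}_{G}(0,T)$ and $Y^n\to Y$ in $M^2_G$ by invoking ``dominated convergence in the $G$-setting''; no such theorem is available for $\hat{\mathbb{E}}$ in the generality needed, and q.s.\ monotone convergence does not by itself give convergence in the $G$-norms (the spaces $S^\alpha_G$, $H^\alpha_G$ are completions, and membership of a pointwise limit must be earned). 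In \cite{LPS} this is handled by showing $(Y^n,Z^n)$ is Cauchy in $S^\alpha_G\times H^\alpha_G$, and the pivotal ingredient is the estimate $\hat{\mathbb{E}}\big[\sup_{t\in[0,T]}\vert (Y^n_t-L_t)^-\vert^{\alpha}\big]\rightarrow 0$, a separate nontrivial lemma in which the upper bound $L_t\le c$ of (A4) (or the It\^o-process structure of (A5)) enters crucially; your phrase ``the uniform continuity estimates force $Y_t\ge L_t$'' conceals this step rather than supplying it. Likewise, you correctly flag the verification that $\{-\int_0^t(Y_s-L_s)\,dA_s\}_{t\in[0,T]}$ is a nonincreasing $G$-martingale as the main obstacle, but then only say you would ``identify the $\hat{\mathbb{E}}_t$-dynamics''; this is the genuinely delicate part of the existence proof in \cite{LPS} and is not provided, so as written the proposal does not close.

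Two smaller points. Defining $A^n_t:=n\int_0^t(Y^n_s-L_s)^-\,ds$ omits the $G$-martingale part: since the limiting formulation contains a single continuous nondecreasing process and no separate $-K$ term, the approximating nondecreasing process must be $A^n_t=n\int_0^t(Y^n_s-L_s)^-\,ds-K^n_t$ (as in \cite{LPS}); otherwise passing to the limit does not produce the claimed equation. On the positive side, your reduction of case (A5) to (A4) via the shift $\bar{Y}=Y-L$, which turns the obstacle into $0$, is indeed how the It\^o-obstacle case is treated in \cite{LPS}, and the uniqueness argument via a priori estimates on the difference of two solutions, using $Y\ge L$, $Y'\ge L$ and the nonincreasing $G$-martingale property to control the cross terms, matches the scheme that yields the paper's Lemma 5.5.
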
\par
Next, we present some relevant estimates regarding the case where $g \equiv 0$ and $l \equiv 0$, similar results and methods still hold for the cases $g \neq 0,\,l \neq 0$.
\begin{lemma}\label{lemma 5.2}
	Suppose that $(\xi,f,L)$ satisfy $\mathrm{(A1)}$-$\mathrm{(A4)}\mathrm{,}$ and $(Y,Z,A) \in \mathcal{S}^{\alpha}_{G}(0,T)\mathrm{,}$ for some $2 \leq \alpha \,\textless\,\beta \mathrm{,}$ is a solution of the reflected $G$-BSDE with data $(\xi,f,L)$. Then$\mathrm{,}$ we have
	\begin{equation}
		\begin{aligned}
		\vert Y_{t}\vert^{\alpha}
		\leq 
		C\hat{\mathbb{E}}_{t}\bigg[1+\vert \xi \vert^{\alpha}+\int\nolimits_{t}^{T} \vert f(s,0,0)\vert^{\alpha} ds\bigg],\notag
		\end{aligned}
	\end{equation}
where the positive constant $C$ depends on $\alpha, T,L,\underline{\sigma},c$.		
\end{lemma}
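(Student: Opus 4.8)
The plan is to prove the a priori bound by a standard Gronwall-type argument adapted to the $G$-expectation framework, exploiting the non-increasing $G$-martingale structure of the reflection term. First I would write out the reflected $G$-BSDE on $[t,T]$, apply $G$-It\^o's formula to $|Y_s|^\alpha$ (or, more conveniently, to $e^{\lambda s}|Y_s|^\alpha$ for a suitable $\lambda>0$ to absorb the Lipschitz constant $L$), and take conditional expectation $\hat{\mathbb{E}}_t[\cdot]$. The key point is that the differential of the It\^o integral term vanishes in $\hat{\mathbb{E}}_t$-expectation (Proposition \ref{proposition 2.4}(i)), and the increasing process $A$ contributes a term of the form $-\alpha\int_t^T |Y_r|^{\alpha-1}\operatorname{sgn}(Y_r)\,dA_r$ which must be controlled. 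Since $A$ increases only where $Y_r = L_r$ (encoded in the non-increasing $G$-martingale condition (iii): $\{-\int_0^t(Y_s-L_s)\,dA_s\}$ is a $G$-martingale), on the support of $dA$ one has $Y_r = L_r \leq c$, so the dangerous term is dominated by a constant multiple of $\int_t^T |Y_r|^{\alpha-1}\,dA_r$ with $|Y_r|$ bounded by $c$ there; the resulting $\hat{\mathbb{E}}_t\big[\int_t^T dA_r\big] = \hat{\mathbb{E}}_t[A_T - A_t]$ then has to be estimated.

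The second ingredient is therefore an a priori estimate on $\hat{\mathbb{E}}_t[(A_T-A_t)^{\,?}]$, or at least on $\hat{\mathbb{E}}_t[A_T-A_t]$, in terms of $\hat{\mathbb{E}}_t\big[1 + |\xi|^\alpha + \int_t^T |f(s,0,0)|^\alpha\,ds\big]$. This is obtained by solving the $G$-BSDE relation for $A_T - A_s$: from property (ii), $A_T - A_s = Y_s - \xi - \int_s^T f(r,Y_r,Z_r)\,dr + \int_s^T Z_r\,dB_r$, so taking $\hat{\mathbb{E}}_s$ kills the stochastic integral and yields a bound on $\hat{\mathbb{E}}_s[A_T-A_s]$ in terms of $|Y_s|$, $\hat{\mathbb{E}}_s[|\xi|]$, and $\hat{\mathbb{E}}_s\big[\int_s^T(|f(r,0,0)| + L|Y_r| + L|Z_r|)\,dr\big]$. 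One then needs the companion bound $\hat{\mathbb{E}}_t\big[\int_t^T|Z_r|^2\,dr\big] \leq C\,\hat{\mathbb{E}}_t\big[\sup_r|Y_r|^2 + \ldots\big]$ (Lemma \ref{lemma 5.4}-type estimate, quoted from the reflected $G$-BSDE literature \cite{LPS}), which lets us close the loop. Substituting back into the $|Y_t|^\alpha$ inequality and choosing $\lambda$ large enough so that the $L$-dependent terms are absorbed, Gronwall's inequality (in its $G$-expectation version, justified because all quantities are deterministic functions of time after taking $\hat{\mathbb{E}}_t$, or via the nonlinear Gronwall lemma of Peng \cite{Peng3}) gives the claimed bound with $C = C(\alpha,T,L,\underline{\sigma},c)$.

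The main obstacle I expect is the careful handling of the increasing process $A$: unlike the classical setting where one simply writes $dA \geq 0$ and uses $Y \geq L$, here the non-degeneracy constant $\underline{\sigma}$ enters through the estimates on $Z$ and $A$ (the $G$-It\^o formula produces a $\langle B\rangle$-term $\frac{\alpha(\alpha-1)}{2}\int |Y_r|^{\alpha-2}|Z_r|^2\,d\langle B\rangle_r$ which is controlled below by $\underline{\sigma}^2$ and above by $\bar\sigma^2$), and one must be sure that the constant $c$ bounding the obstacle from above (assumption (A4)) is genuinely used — it is precisely what makes the $dA$-term harmless rather than merely sign-definite. The cleanest route is probably to reduce everything to the analogous estimate already proved for reflected $G$-BSDEs in \cite{LPS} (their Proposition on a priori bounds), of which this is essentially a restatement with the conditional expectation $\hat{\mathbb{E}}_t$ in place of $\hat{\mathbb{E}}$; so in the write-up I would either cite that result directly or reproduce the short $e^{\lambda s}|Y_s|^\alpha$ computation and refer to \cite{LPS} for the auxiliary $Z$- and $A$-estimates.
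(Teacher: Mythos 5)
Your fallback option (cite \cite{LPS} directly) is in fact exactly what the paper does --- it gives no proof of Lemma \ref{lemma 5.2} and simply refers to \cite{LPS} --- so that part is unobjectionable. The self-contained argument you sketch, however, has a genuine gap at its central step. You assert that ``$A$ increases only where $Y_r=L_r$'', claiming this is encoded in condition (iii). It is not: in the $G$-framework the defining condition is only that $\{-\int_0^t(Y_s-L_s)\,dA_s\}$ is a nonincreasing $G$-martingale, which is strictly weaker than the classical Skorokhod condition $\int_0^T(Y_s-L_s)\,dA_s=0$ q.s.; indeed $\hat{\mathbb{E}}[-X]=0$ for a nonnegative $X$ does not force $X=0$ q.s., and replacing the Skorokhod condition by this martingale condition is precisely the point of the formulation in \cite{LPS}. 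So you may not restrict the $dA$-integral to the set $\{Y=L\}$, and (a smaller imprecision) even there you would only get $Y_r\le c$, not $|Y_r|\le c$, since $c$ bounds the obstacle from above only. The device that actually works, and that is used in \cite{LPS}, is to shift by the upper bound: with $\tilde{Y}=Y-c$, $\tilde{L}=L-c\le 0$, the It\^o expansion of $|\tilde{Y}|^{\alpha}$ produces $\alpha\int|\tilde{Y}_s|^{\alpha-2}\tilde{Y}_s\,dA_s\le\alpha\int|\tilde{Y}_s|^{\alpha-2}(\tilde{Y}_s-\tilde{L}_s)\,dA_s$, and this last term --- a nonnegative integrand against $dA$ whose ``unweighted'' version $-\int(\tilde{Y}-\tilde{L})\,dA$ is a nonincreasing $G$-martingale --- can be discarded after taking $\hat{\mathbb{E}}_t$ (via the lemma, of the type invoked from \cite{HJP1} in the proof of Lemma \ref{Lemma 4.10}, that integrating a nonnegative process against a nonincreasing $G$-martingale again yields a nonincreasing $G$-martingale, whose conditional increments vanish). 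No estimate on $A_T-A_t$ is needed at all.

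This matters because your proposed repair --- bounding $\hat{\mathbb{E}}_t[A_T-A_t]$ from the equation and then invoking the Lemma \ref{lemma 5.4}-type $Z$-estimate --- is circular: those $Z$- and $A$-estimates are themselves stated in terms of $\hat{\mathbb{E}}_t[\sup_{s\in[t,T]}|Y_s|^{\alpha}]$, which is (the sup over $t$ of) exactly the quantity you are trying to bound, and the resulting inequality $|Y_t|^{\alpha}\le C\hat{\mathbb{E}}_t[\mathrm{data}]+C(\hat{\mathbb{E}}_t[\sup_s|Y_s|^{\alpha}])^{1/2}(\cdots)^{1/2}$ cannot be closed by a Gronwall argument (the supremum runs over the whole interval $[t,T]$, the constant is not small, and after taking $\hat{\mathbb{E}}_t$ the quantities are $\Omega_t$-measurable random variables, not deterministic functions of $t$). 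Following your architecture you would at best recover an estimate with a $\Psi_{t,T}$-type right-hand side, as in Lemma \ref{lemma 5.5}, not the clean bound claimed in Lemma \ref{lemma 5.2}; the correct logical order in \cite{LPS} is the reverse of yours: first the $Y$-estimate by the shift-by-$c$ sign argument, then the $Z$- and $A$-estimates of Lemma \ref{lemma 5.4} derived from it.
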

\begin{remark}\label{remark 5.3}
	\rm{I}n particular, when the upper bound $c$ of the obstacle process is equal to $0$, under the same assumptions as Lemma \ref{lemma 5.2}, we get
	\begin{align}
		\vert Y_{t}\vert^{\alpha}\notag
		\leq 
		C\hat{\mathbb{E}}_{t}\bigg[\vert \xi \vert^{\alpha}+\int\nolimits_{t}^{T} \vert f(s,0,0)\vert^{\alpha} ds\bigg],
	\end{align}
\end{remark}
where the positive constant $C$ depends on $\alpha, T,L,\underline{\sigma}$.
\begin{lemma}\label{lemma 5.4}
	Let $f$ satisfy $\mathrm{(A1)}$-$\mathrm{(A2)}$$\mathrm{,}$ and $(Y,Z,A)$ satisfy:
	\begin{align}
	Y_{t}=\xi+\displaystyle\int_t^{T} f(s,Y_{s},Z_{s})ds-\int_t^{T}Z_{s} dB_{s}+(A_{T}-A_{t}),\notag
	\end{align}
    where $(Y,Z,A) \in \mathcal{S}^{\alpha}_{G}(0,T)$ with $\alpha \,\textgreater\, 1 \mathrm{,}$ then for each $t\in [0,T]\mathrm{,}$
    \begin{align}
    &\,\hat{\mathbb{E}}_{t}\bigg[  \bigg(\int\nolimits_{t}^{T} \vert Z_{s}\vert^{2} ds\bigg)^{\frac{\alpha}{2}}  \bigg]\notag\\\notag
    \leq 
    &\,C\bigg\{\hat{\mathbb{E}}_{t}\bigg[\sup_{s \in [t,T]} \vert Y_s\vert^{\alpha}\bigg]+\bigg(\hat{\mathbb{E}}_{t}\bigg[\sup_{s \in [t,T]} \vert Y_s\vert^{\alpha}\bigg]\bigg)^{\frac{1}{2}}\bigg(\hat{\mathbb{E}}_{t}\bigg[\bigg(\int\nolimits_{t}^{T} \vert f(s,0,0)\vert ds\bigg)^{\alpha}\bigg]\bigg)^{\frac{1}{2}}\bigg\},\\
    &\,\hat{\mathbb{E}}_{t}\big[ \vert A_{T}-A_t\vert^{\alpha}\big] 
    \leq
    C\bigg\{\hat{\mathbb{E}}_{t}\bigg[\sup_{s \in [t,T]} \vert Y_s\vert^{\alpha}\bigg]+\hat{\mathbb{E}}_{t}\bigg[\bigg(\int\nolimits_{t}^{T} \vert f(s,0,0)\vert ds\bigg)^{\alpha}\bigg]\bigg\},\notag
    \end{align}
where the positive contsant $C$ depends on $\alpha, T,L,\underline{\sigma}$.
\end{lemma}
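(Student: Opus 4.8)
The plan is to reduce both estimates to the $G$-It\^o formula applied to $s\mapsto|Y_s|^2$, following the now-standard a priori argument for $G$-BSDEs. Since $(Y,Z,A)$ satisfies $dY_s=-f(s,Y_s,Z_s)\,ds+Z_s\,dB_s-dA_s$, applying $G$-It\^o's formula on $[t,T]$ and rearranging yields the identity
\[
|Y_t|^2+\int_t^T|Z_s|^2\,d\langle B\rangle_s=|\xi|^2+2\int_t^T Y_sf(s,Y_s,Z_s)\,ds-2\int_t^T Y_sZ_s\,dB_s+2\int_t^T Y_s\,dA_s .
\]
This is the starting point: by the non-degeneracy of $G$, $\underline{\sigma}^2\int_t^T|Z_s|^2\,ds\le\int_t^T|Z_s|^2\,d\langle B\rangle_s$, which is what lets one pass from the quadratic variation to the Lebesgue integral of $|Z|^2$.

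First I would establish the $Z$-estimate. Using (A2) one has $2|Y_s||f(s,Y_s,Z_s)|\le 2|Y_s||f(s,0,0)|+2L|Y_s|^2+2L|Y_s||Z_s|$, and Young's inequality applied to $2L|Y_s||Z_s|$ peels off half of the $Z$-term, which is absorbed on the left. The driver remainder $2\int_t^T|Y_s||f(s,0,0)|\,ds\le 2\sup_{s\in[t,T]}|Y_s|\int_t^T|f(s,0,0)|\,ds$ produces, after raising to the power $\alpha/2$ and applying $\hat{\mathbb{E}}_t$ together with Cauchy--Schwarz, exactly the mixed term $\big(\hat{\mathbb{E}}_t[\sup_s|Y_s|^\alpha]\big)^{1/2}\big(\hat{\mathbb{E}}_t[(\int_t^T|f(s,0,0)|\,ds)^\alpha]\big)^{1/2}$; the term $\int_t^T|Y_s|^2\,ds\le(T-t)\sup_s|Y_s|^2$ contributes $\hat{\mathbb{E}}_t[\sup_s|Y_s|^\alpha]$. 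The $G$-stochastic integral $\int_t^T Y_sZ_s\,dB_s$ is treated by the BDG inequality for $G$-It\^o integrals (Proposition~\ref{proposition 2.4} and its small-exponent analogue), giving $\hat{\mathbb{E}}_t\big[|\int_t^T Y_sZ_s\,dB_s|^{\alpha/2}\big]\le C\hat{\mathbb{E}}_t\big[\sup_s|Y_s|^{\alpha/2}(\int_t^T|Z_s|^2\,ds)^{\alpha/4}\big]$, which Young's inequality splits into a small multiple of $\hat{\mathbb{E}}_t[(\int_t^T|Z_s|^2\,ds)^{\alpha/2}]$ (absorbed) plus $C\hat{\mathbb{E}}_t[\sup_s|Y_s|^\alpha]$; finally $2\int_t^T Y_s\,dA_s\le 2\sup_s|Y_s|\,(A_T-A_t)$, and Young gives $\eta\,\hat{\mathbb{E}}_t[(A_T-A_t)^\alpha]+C_\eta\hat{\mathbb{E}}_t[\sup_s|Y_s|^\alpha]$ with $\eta$ to be chosen small. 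This produces a bound for $\hat{\mathbb{E}}_t[(\int_t^T|Z_s|^2\,ds)^{\alpha/2}]$ in terms of $\hat{\mathbb{E}}_t[\sup_s|Y_s|^\alpha]$, the mixed term, and $\eta\,\hat{\mathbb{E}}_t[(A_T-A_t)^\alpha]$.

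For the $A$-estimate I would read $A_T-A_t=(Y_t-\xi)-\int_t^T f(s,Y_s,Z_s)\,ds+\int_t^T Z_s\,dB_s$ directly off the equation, and apply the triangle inequality, (A2), H\"older in time ($(\int_t^T|Z_s|\,ds)^\alpha\le(T-t)^{\alpha/2}(\int_t^T|Z_s|^2\,ds)^{\alpha/2}$) and BDG, to obtain
\[
\hat{\mathbb{E}}_t[(A_T-A_t)^\alpha]\le C\Big\{\hat{\mathbb{E}}_t[\sup_s|Y_s|^\alpha]+\hat{\mathbb{E}}_t\big[(\textstyle\int_t^T|f(s,0,0)|\,ds)^\alpha\big]+\hat{\mathbb{E}}_t\big[(\textstyle\int_t^T|Z_s|^2\,ds)^{\alpha/2}\big]\Big\}.
\]
Substituting this into the $Z$-inequality and choosing $\eta$ small enough that the $\eta C$-multiple of $\hat{\mathbb{E}}_t[(\int_t^T|Z_s|^2\,ds)^{\alpha/2}]$ can be absorbed on the left completes the $Z$-estimate; feeding the completed $Z$-estimate back into the displayed $A$-inequality then completes the $A$-estimate. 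The only case distinction is $\alpha\ge 2$ versus $1<\alpha<2$, which merely decides whether $(a+b)^{\alpha/2}$ is controlled by $C(a^{\alpha/2}+b^{\alpha/2})$ or by $a^{\alpha/2}+b^{\alpha/2}$, and whether the small-exponent BDG inequality is invoked.

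The main obstacle is the circular coupling between the two estimates, compounded by the fact that $\int_t^T Y_sZ_s\,dB_s$ is not a martingale in the classical sense and its integrand need not a priori lie in $M^2_G(0,T)$: to justify $\hat{\mathbb{E}}_t[\int_t^T Y_sZ_s\,dB_s]=0$ and the BDG bound one must first truncate $Y$ by stopping (or argue on the penalized approximating sequence furnished by Lemma~\ref{lemma 5.1}) and then pass to the limit with uniform constants. Keeping careful track of the Young-inequality parameters so that the coupled system closes and exactly the stated right-hand sides survive is the part that demands the most bookkeeping.
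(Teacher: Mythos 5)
Your architecture --- $G$-It\^o's formula applied to $|Y_s|^2$, the nondegeneracy bound $\int_t^T|Z_s|^2\,d\langle B\rangle_s\ge \underline{\sigma}^2\int_t^T|Z_s|^2\,ds$, BDG plus Young for the stochastic integral, and reading $A_T-A_t$ off the equation --- is exactly the argument behind the proof the paper points to (the paper does not reprove Lemma \ref{lemma 5.4}; it cites \cite{LPS}, whose estimate goes back to Proposition 3.5 of \cite{HJP1}). The one place you deviate is exactly where your bookkeeping fails to return the statement as written: you bound $2\int_t^T Y_s\,dA_s\le 2\sup_{s}|Y_s|\,(A_T-A_t)$, Young this against $\eta\,\hat{\mathbb{E}}_{t}[(A_T-A_t)^{\alpha}]$, and close the loop by substituting your $A$-estimate. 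That substitution injects the term $\eta C\,\hat{\mathbb{E}}_{t}\big[\big(\int_t^T|f(s,0,0)|\,ds\big)^{\alpha}\big]$ into the $Z$-bound, and no choice of the (fixed) small constant $\eta$ removes it or folds it into the mixed term $\big(\hat{\mathbb{E}}_{t}[\sup_s|Y_s|^{\alpha}]\big)^{1/2}\big(\hat{\mathbb{E}}_{t}[(\int_t^T|f(s,0,0)|\,ds)^{\alpha}]\big)^{1/2}$. So your route proves a strictly weaker $Z$-estimate with an extra additive $\hat{\mathbb{E}}_{t}[(\int_t^T|f(s,0,0)|\,ds)^{\alpha}]$ term --- sufficient for every application in this paper (e.g. Lemma \ref{lemma 3.2}\,(iii)), but not the inequality stated in the lemma.

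The standard repair, which is what the cited proof does, is to eliminate $A$ from the $Z$-estimate pathwise, before any expectation is taken: insert $A_T-A_t=Y_t-\xi-\int_t^T f(s,Y_s,Z_s)\,ds+\int_t^T Z_s\,dB_s$ into $2\sup_s|Y_s|(A_T-A_t)$. This produces $C\sup_s|Y_s|^2$, then $2\sup_s|Y_s|\int_t^T|f(s,0,0)|\,ds$ (which, after raising to the power $\alpha/2$, applying $\hat{\mathbb{E}}_{t}$ and Cauchy--Schwarz, is exactly the mixed term), then $2L\sup_s|Y_s|\int_t^T(|Y_s|+|Z_s|)\,ds$ (H\"older in time and Young absorb the $Z$-part into the left-hand side), and finally $2\sup_s|Y_s|\,\big|\int_t^T Z_s\,dB_s\big|$, which conditional Cauchy--Schwarz, BDG and Young turn into $\epsilon\,\hat{\mathbb{E}}_{t}[(\int_t^T|Z_s|^2ds)^{\alpha/2}]+C_{\epsilon}\hat{\mathbb{E}}_{t}[\sup_s|Y_s|^{\alpha}]$. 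No standalone $(\int_t^T|f(s,0,0)|\,ds)^{\alpha}$ term is ever created, the stated $Z$-estimate follows, and the $A$-estimate is then obtained from the equation together with the completed $Z$-estimate, as you describe. Your remaining technical points --- the case split $\alpha\ge 2$ versus $1<\alpha<2$, the need for conditional BDG-type inequalities (including the sub-unit exponent version) and an approximation/truncation step to justify them for the integrand $YZ$ --- are the same issues the reference handles and are correctly flagged.
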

\begin{lemma}\label{lemma 5.5}
	Let $(\xi^1,f^1,L^1)$ and $(\xi^2,f^2,L^2)$ be two sets of data$\mathrm{,}$ each one satisfying assumptions $\mathrm{(A1)}$-$\mathrm{(A4)}\mathrm{,}$ and let $(Y^i,Z^i,A^i) \in \mathcal{S}^{\alpha}_{G}(0,T)$ be the solutions of the reflected $G$-BSDEs with data $(\xi^i,f^i,L^i)\mathrm{,}$\,$i=1,2\mathrm{,}$ respectively$\mathrm{,}$ with $2 \leq \alpha \,\textless\, \beta$. Set $\hat{Y}_t=Y^{1}_{t}-Y^{2}_{t}\mathrm{,}$ $\hat{L}_t=L^{1}_{t}-L^{2}_{t}\mathrm{,}$
	$\hat{\xi}=\xi^{1}-\xi^{2}$. Then$\mathrm{,}$ there exists a constant $C \coloneqq C(\alpha,T,L,\underline{\sigma},c) \,\textgreater\, 0$ such that
	 \begin{equation}
		\begin{aligned}
		    \vert \hat{Y}_t \vert^{\alpha}
			&\leq 
			C\bigg\{\hat{\mathbb{E}}_{t}\bigg[\vert \hat{\xi}\vert^{\alpha}+\int\nolimits_{t}^{T} \vert \hat{\lambda}_{s} \vert ^{\alpha} ds\bigg]+\bigg(\hat{\mathbb{E}}_{t}\bigg[\sup_{s \in [t,T]} \vert \hat{L}_s\vert^{\alpha}\bigg]\bigg)^{\frac{1}{\alpha}}\Psi_{t,T}^{\frac{\alpha-1}{\alpha}}\bigg\},\notag\\
		\end{aligned}
	\end{equation}
where $\hat{\lambda}_{s}=\vert f^1(s,Y^2_{s},Z^2_{s})-f^2(s,Y^2_{s},Z^2_{s})\vert$ and
	\begin{align}
    \Psi_{t,T}&=\sum_{i=1}^{2}\hat{\mathbb{E}}_{t}\bigg[\sup\limits_{s \in [t,T]}  \hat{\mathbb{E}}_{s}\bigg[1+\vert \xi^i \vert^{\alpha} + \int\nolimits_{t}^{T} \vert f^i(r,0,0)\vert^{\alpha} dr  \bigg]    \bigg].\notag
	\end{align}
\end{lemma}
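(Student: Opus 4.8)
The plan is to run the standard linearization-plus-$G$-It\^o argument for the difference of two reflected $G$-BSDEs, the only non-routine ingredient being the control of the difference of the reflecting increasing processes. Subtracting the two equations of part (ii) of the reflected $G$-BSDE and setting $\hat Z_s = Z^1_s - Z^2_s$, $\hat A_s = A^1_s - A^2_s$, one gets
\begin{align}
\hat Y_t = \hat\xi + \int_t^T \big(f^1(s,Y^1_s,Z^1_s) - f^2(s,Y^2_s,Z^2_s)\big)\,ds - \int_t^T \hat Z_s\,dB_s + (\hat A_T - \hat A_t).\notag
\end{align}
Decomposing $f^1(s,Y^1_s,Z^1_s) - f^2(s,Y^2_s,Z^2_s) = \big(f^1(s,Y^1_s,Z^1_s) - f^1(s,Y^2_s,Z^2_s)\big) + \big(f^1 - f^2\big)(s,Y^2_s,Z^2_s)$ and using the Lipschitz condition (A2), I would write the first bracket as $a_s\hat Y_s + b_s\hat Z_s$ with $|a_s|,|b_s|\le L$ and keep $\delta_s := (f^1 - f^2)(s,Y^2_s,Z^2_s)$, which satisfies $|\delta_s|\le\hat\lambda_s$.

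Next I would apply $G$-It\^o's formula to $|\hat Y_s|^\alpha$ on $[t,T]$ (a $C^2$ function for $\alpha\ge2$, or its standard regularization $(\varepsilon + |\hat Y_s|^2)^{\alpha/2}$ with $\varepsilon\downarrow0$), insert a large exponential weight $e^{\mu s}$, and use the nondegeneracy $d\langle B\rangle_s\ge\underline\sigma^2\,ds$ to retain a nonnegative $\int_t^T|\hat Y_s|^{\alpha-2}|\hat Z_s|^2\,ds$ term on the left; the $dB$-integral drops out after applying $\hat{\mathbb{E}}_t$. Taking $\mu$ large absorbs the $a_s\hat Y_s$ contribution, Young's inequality handles $b_s\hat Y_s\hat Z_s$ against the $|\hat Z_s|^2$ term, and the $\delta_s$ term yields $C\hat{\mathbb{E}}_t[\int_t^T|\hat\lambda_s|^\alpha\,ds]$ after a further Young split. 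What remains is the reflection term $\int_t^T |\hat Y_s|^{\alpha-2}\hat Y_s\,d\hat A_s$. Here I would decompose $\hat Y_s = (Y^1_s - L^1_s) + \hat L_s - (Y^2_s - L^2_s)$, use $Y^i_s\ge L^i_s$ and $dA^i_s\ge0$ to bound the cross pieces $(L^1_s - Y^2_s)\,dA^1_s$ and $(L^2_s - Y^1_s)\,dA^2_s$ by $(\sup_{s\in[t,T]}|\hat L_s|)\,dA^i_s$, and dispose of the diagonal pieces $(Y^i_s - L^i_s)\,dA^i_s$ via the property that $\{-\int_0^\cdot (Y^i_r - L^i_r)\,dA^i_r\}$ is a nonincreasing $G$-martingale (equivalently, by passing to the penalized approximations of \cite{LPS}). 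This reduces the reflection term to $C\hat{\mathbb{E}}_t[\,\sup_{s\in[t,T]}|\hat Y_s|^{\alpha-1}\,\sup_{s\in[t,T]}|\hat L_s|\,((A^1_T - A^1_t)+(A^2_T - A^2_t))\,]$.

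To finish I would invoke the auxiliary estimates: by Lemma \ref{lemma 5.2}, $\sup_{s\in[t,T]}|Y^i_s|^\alpha$ — hence $\sup_{s\in[t,T]}|\hat Y_s|^\alpha$ — has $\hat{\mathbb{E}}_t$ bounded by a constant times $\hat{\mathbb{E}}_t[\sup_{s}\hat{\mathbb{E}}_s[1 + |\xi^i|^\alpha + \int_t^T|f^i(r,0,0)|^\alpha\,dr]]$, i.e.\ by $\Psi_{t,T}$; by Lemma \ref{lemma 5.4} (applicable to the reflected solutions $(Y^i,Z^i,A^i)\in\mathcal{S}^\alpha_G(0,T)$ furnished by Lemma \ref{lemma 5.1}), $\hat{\mathbb{E}}_t[|A^i_T - A^i_t|^\alpha]$ is likewise bounded by $C\Psi_{t,T}$. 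Then H\"older's inequality with conjugate exponents $\alpha$ and $\alpha/(\alpha-1)$, applied while keeping the weight $|\hat Y_s|^{\alpha-1}$ attached, extracts precisely $(\hat{\mathbb{E}}_t[\sup_s|\hat L_s|^\alpha])^{1/\alpha}\,\Psi_{t,T}^{(\alpha-1)/\alpha}$ from the reflection term, and a conditional Gronwall argument on $|\hat Y_t|^\alpha$ closes the estimate with $C = C(\alpha,T,L,\underline\sigma,c)$.

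The hard part will be the treatment of $\int_t^T|\hat Y_s|^{\alpha-2}\hat Y_s\,d(A^1_s - A^2_s)$: unlike in the classical theory, the complementarity relation $\int_0^T(Y^i_s - L^i_s)\,dA^i_s = 0$ is not available, only its substitute as a nonincreasing $G$-martingale, so the diagonal contributions cannot be eliminated pathwise and must be dealt with through conditional sublinear expectations (or through the penalization scheme, using the one-sided Lipschitz inequality $(a-b)(a^- - b^-)\le 0$ for the penalty terms together with uniform-in-$n$ moment bounds on the martingale parts). Obtaining the exponent $(\alpha-1)/\alpha$ on $\Psi_{t,T}$ — rather than $1/\alpha$ — depends on not splitting off the factor $|\hat Y_s|^{\alpha-1}$ until the very last H\"older step.
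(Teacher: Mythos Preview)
The paper does not give its own proof of this lemma: immediately after Lemma~\ref{lemma 5.5} it simply writes ``For the proof of Lemmas \ref{lemma 5.1}--\ref{lemma 5.2}, \ref{lemma 5.4}--\ref{lemma 5.5} and Remark \ref{remark 5.3}, one can refer to \cite{LPS}.'' So there is nothing in the paper to compare against beyond the citation.

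Your sketch is essentially the argument carried out in \cite{LPS} (their Proposition~3.6/3.7): apply $G$-It\^o's formula to $e^{\mu s}|\hat Y_s|^\alpha$, linearize the driver, and isolate the reflection contribution. You have also correctly put your finger on the one genuinely $G$-specific obstruction: with only the weakened Skorokhod condition (that $-\int(Y^i_r-L^i_r)\,dA^i_r$ is a nonincreasing $G$-martingale), the weighted diagonal pieces $\int |\hat Y_s|^{\alpha-2}(Y^i_s-L^i_s)\,dA^i_s$ are nonnegative and \emph{cannot} be bounded above just by invoking the $G$-martingale property, since for a nonincreasing $G$-martingale $N$ one has $\hat{\mathbb{E}}_t[N_T-N_t]=0$ but no useful upper bound on $\hat{\mathbb{E}}_t[-(N_T-N_t)]$. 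Your parenthetical alternative --- pass to the penalized equations and use $(a-b)(a^--b^-)\le 0$ after writing $\hat Y^n_s=(a_s-b_s)+\hat L_s$ with $a_s=Y^{1,n}_s-L^1_s$, $b_s=Y^{2,n}_s-L^2_s$ --- is precisely the route taken in \cite{LPS}, combined with the uniform-in-$n$ moment bounds on $A^{i,n}_T=n\int_0^T(Y^{i,n}_s-L^i_s)^-\,ds$ established there. So your plan is correct provided you commit to the penalization route for the reflection term rather than trying to argue directly from the $G$-martingale property.
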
\par
For the proof of Lemmas \ref{lemma 5.1}-\ref{lemma 5.2}, \ref{lemma 5.4}-\ref{lemma 5.5} and Remark \ref{remark 5.3}, one can refer to \cite{LPS}. Next, we shall state the comparison theorem for the averaged PDE \eqref{(4.1)}, which can be utilized to ensure that the averaged PDE \eqref{(4.1)} admits the unique viscosity solution.
\begin{lemma}\label{lemma 5.6}
The obstacle problem \eqref{(4.1)} has at most one viscosity solution in the class of continuous functions which grow at most polynomially at infinity.
\end{lemma}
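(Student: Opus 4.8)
The plan is to prove the stronger comparison principle: if $u$ is a viscosity subsolution and $v$ a viscosity supersolution of \eqref{(4.1)}, both growing at most polynomially at infinity, then $u\le v$ on $[0,T]\times\mathbb{R}^{n}$; since a viscosity solution is at once a sub- and a supersolution, uniqueness in the prescribed class follows. The argument is the classical Jensen--Ishii machinery for parabolic obstacle problems (see \cite{CIP}, and the comparison theorems for the $\varepsilon$-dependent problem \eqref{(3.11)} in \cite{Peng3,LPS}); the only genuinely new point is to check that the averaged coefficient $\bar F$ of \textbf{(H5)} still enjoys the structural properties used there. For each $s>0$ the averaged integrand $(x,v,p,A)\mapsto\frac1s\int_{0}^{s}[pb(r,x)+f(r,x,v,p\sigma(r,x))+G(H(r,x,v,p,A))]\,dr$ inherits, \emph{uniformly in} $s$: degenerate ellipticity (from monotonicity of $G$ and of $A\mapsto\sigma^{\top}A\sigma$), Lipschitz dependence on $v$ with some constant $L'$ (from \textbf{(H2)} and sublinearity of $G$), and a local modulus of continuity in $(x,p,A)$ with a weight $(1+|x|^{k})$ for some exponent $k$ depending on $m$ (from \textbf{(H1)}--\textbf{(H2)}). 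Passing to the limit, $\bar F$ satisfies $\bar F(x,v,p,A)\le\bar F(x,v,p,B)$ for $A\ge B$, $|\bar F(x,v,p,A)-\bar F(x,v',p,A)|\le L'|v-v'|$, and, for matrices $X,Y$ produced by Ishii's lemma, a structure condition of the form $\bar F(x,v,\alpha^{-1}(x-y),X)-\bar F(y,v,\alpha^{-1}(x-y),Y)\le\omega\big(|x-y|+\alpha^{-1}|x-y|^{2}\big)(1+|x|^{k}+|y|^{k})$ for a modulus $\omega$.

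\textbf{Normalisation and doubling.} First I would replace $(u,v)$ by $(e^{\mu t}u,e^{\mu t}v)$ with $\mu>L'$: this turns \eqref{(4.1)} into an obstacle problem whose new coefficient is \emph{strictly decreasing} in the $v$-variable, with obstacle $e^{\mu t}S(t,x)$ (still Lipschitz in $x$, still bounded above on bounded time sets) and terminal datum $e^{\mu T}\varphi$ (still of polynomial growth), thereby removing the absence of monotonicity in $v$. Arguing by contradiction, suppose $M:=\sup_{[0,T]\times\mathbb{R}^{n}}(u-v)>0$. Using the polynomial growth, subtract $\eta(1+|x|^{2})^{q}$ with $q$ large enough and $\eta>0$ small, so that the (still positive) supremum is attained on a compact set, and add a penalisation $\gamma/(T-t)$ which confines the time-coordinate of any maximiser to $(0,T)$. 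Then, for $\alpha>0$, I would maximise $\Phi_{\alpha}(t,x,y)=u(t,x)-v(t,y)-\tfrac{1}{2\alpha}|x-y|^{2}-\eta(1+|x|^{2})^{q}-\eta(1+|y|^{2})^{q}-\gamma/(T-t)$ at a point $(t_{\alpha},x_{\alpha},y_{\alpha})$. The standard estimates give, as $\alpha\to0$: $\alpha^{-1}|x_{\alpha}-y_{\alpha}|^{2}\to0$, $x_{\alpha},y_{\alpha}$ confined to a fixed compact set, and $t_{\alpha}$ bounded away from $T$; moreover $u(t_{\alpha},x_{\alpha})-v(t_{\alpha},y_{\alpha})\ge M/2$ for $\alpha,\eta,\gamma$ small. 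Ishii's lemma in the parabolic super/subjet formulation of Definition \ref{definition 4.2} (see \cite{CIP}) then produces $(p_{1},q_{\alpha},X_{\alpha})$ in the closed parabolic superjet $\mathcal{P}^{2,+}u(t_{\alpha},x_{\alpha})$ and $(p_{2},\bar q_{\alpha},Y_{\alpha})$ in $\mathcal{P}^{2,-}v(t_{\alpha},y_{\alpha})$ with $q_{\alpha}=\alpha^{-1}(x_{\alpha}-y_{\alpha})+O(\eta)$, $\bar q_{\alpha}=\alpha^{-1}(x_{\alpha}-y_{\alpha})+O(\eta)$, $p_{1}-p_{2}=\gamma/(T-t_{\alpha})^{2}>0$, and the usual matrix inequality relating $X_{\alpha}$ and $Y_{\alpha}$ up to an $O(\eta)$ correction from the weight.

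\textbf{Splitting and conclusion.} By the subsolution property at $(t_{\alpha},x_{\alpha})$, either $u(t_{\alpha},x_{\alpha})-S(t_{\alpha},x_{\alpha})\le0$ or $-p_{1}-\bar F(x_{\alpha},u(t_{\alpha},x_{\alpha}),q_{\alpha},X_{\alpha})\le0$, while the supersolution property at $(t_{\alpha},y_{\alpha})$ gives both $v(t_{\alpha},y_{\alpha})\ge S(t_{\alpha},y_{\alpha})$ and $-p_{2}-\bar F(y_{\alpha},v(t_{\alpha},y_{\alpha}),\bar q_{\alpha},Y_{\alpha})\ge0$. In the first alternative, Lipschitz continuity of $S$ and $|x_{\alpha}-y_{\alpha}|\to0$ yield $u(t_{\alpha},x_{\alpha})-v(t_{\alpha},y_{\alpha})\le S(t_{\alpha},x_{\alpha})-S(t_{\alpha},y_{\alpha})\to0$, contradicting $u(t_{\alpha},x_{\alpha})-v(t_{\alpha},y_{\alpha})\ge M/2$. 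In the second alternative, subtracting the two viscosity inequalities gives $\gamma/(T-t_{\alpha})^{2}\le\bar F(x_{\alpha},u_{\alpha},q_{\alpha},X_{\alpha})-\bar F(y_{\alpha},v_{\alpha},\bar q_{\alpha},Y_{\alpha})$; I would split the right-hand side into the increment in the $v$-slot at fixed $(x_{\alpha},q_{\alpha},X_{\alpha})$ plus the increment in the remaining slots. Since $u_{\alpha}-v_{\alpha}\ge M/2>0$, the former is $\le-(\mu-L')(u_{\alpha}-v_{\alpha})\le-(\mu-L')M/2<0$ by the strict $v$-monotonicity obtained in the normalisation step; the latter is bounded, using the structure condition for $\bar F$, the matrix inequality, the Lipschitz bounds \textbf{(H1)}, and the (now harmless, as $x_{\alpha},y_{\alpha}$ are bounded) polynomial growth of $f,g$, by $\omega\big(|x_{\alpha}-y_{\alpha}|+\alpha^{-1}|x_{\alpha}-y_{\alpha}|^{2}\big)(1+|x_{\alpha}|^{k})+C\eta\to0$ as first $\alpha\to0$ then $\eta\to0$. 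Hence $\gamma/(T-t_{\alpha})^{2}\le0$, a contradiction. Therefore $u\le v$, which proves the lemma.

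\textbf{Main obstacle.} The hard part is not the doubling argument itself but the verification that the Cesàro-type limit in \textbf{(H5)} preserves, \emph{uniformly}, the modulus-of-continuity estimate in $(x,p,A)$ that Ishii's lemma requires, together with the bookkeeping of the polynomial weights so that the localisation keeps $(x_{\alpha},y_{\alpha})$ in a fixed compact set while the $\eta$-corrections to the jets remain negligible; once these ingredients are in place the remainder is routine.
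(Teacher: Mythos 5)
Your overall strategy is essentially the paper's: comparison via doubling of variables and Ishii's lemma, an exponential-in-time rescaling to make the averaged operator proper (the paper uses $e^{\lambda t}$ together with division by the spatial weight $\xi(x)=(1+\vert x\vert^{2})^{k/2}$ to make $u,v$ bounded, whereas you keep $u,v$ and put the polynomial weight $\eta(1+\vert x\vert^{2})^{q}$ into the test function), the same treatment of the obstacle branch via the Lipschitz continuity of $S$, and the same preliminary verification that the Ces\`aro limit $\bar F$ of (H5) inherits degenerate ellipticity, Lipschitz dependence on $v$ and the structure condition from (H1)--(H2) (this is exactly what the paper's estimates $O_{1},O_{2},O_{3}$ and $U_{1},U_{2},U_{3}$ do).

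There is, however, a genuine error in your handling of the time boundary. Problem \eqref{(4.1)} is a terminal-value problem: by Definition \ref{definition 4.3} the sub/supersolution inequalities are available only on $(0,T)\times\mathbb{R}^{n}$, while at $t=T$ the ordering $u(T,\cdot)\le\varphi\le v(T,\cdot)$ already controls the maximiser. The end that must be kept away from the maximiser is therefore $t=0$, not $t=T$; your penalisation $\gamma/(T-t)$ blows up at the wrong end, so nothing prevents $t_{\alpha}=0$, where neither viscosity inequality can be invoked. (The paper adds $\varepsilon/t$ to the supersolution, $\widetilde v_{1}=\widetilde v+\varepsilon/t$, precisely to force $t_{\alpha}>0$ and to obtain the strict supersolution inequality \eqref{(5.11)} with the extra $+\varepsilon/t^{2}$.) Moreover, the inequality you extract has the wrong sign: with $p_{1}-p_{2}=\gamma/(T-t_{\alpha})^{2}$, the two viscosity inequalities $-p_{1}\le\bar F(x_{\alpha},u_{\alpha},q_{\alpha},X_{\alpha})$ and $-p_{2}\ge\bar F(y_{\alpha},v_{\alpha},\bar q_{\alpha},Y_{\alpha})$ only yield
\[
-\frac{\gamma}{(T-t_{\alpha})^{2}}\;\le\;\bar F(x_{\alpha},u_{\alpha},q_{\alpha},X_{\alpha})-\bar F(y_{\alpha},v_{\alpha},\bar q_{\alpha},Y_{\alpha}),
\]
not the inequality with $+\gamma/(T-t_{\alpha})^{2}$ on the left; and since for fixed $\gamma$ one only knows $T-t_{\alpha}\gtrsim\gamma$, the quantity $\gamma/(T-t_{\alpha})^{2}$ may be of order $1/\gamma$, so the resulting chain $(\mu-L')M/2\le\gamma/(T-t_{\alpha})^{2}+o(1)$ produces no contradiction. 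The repair is to drop $\gamma/(T-t)$ (the case $t_{\alpha}=T$ is excluded directly by the terminal ordering) and instead add a term blowing up at $t=0$ as in the paper, or invoke the extension of the viscosity inequalities up to $t=0$; the strictness needed then comes solely from the $e^{\mu t}$ monotonicity, exactly as in the paper. A secondary point, which you flag yourself: because $f$, $g_{ij}$ and $\sigma^{\top}A\sigma$ grow polynomially in $x$, the $O(\eta)$ corrections to the jets are multiplied by polynomial factors in $\vert x_{\alpha}\vert$ while the localisation radius grows as $\eta\downarrow0$, so calling them negligible requires the explicit bookkeeping the paper carries out through the boundedness of $\xi^{-1}D\xi$, $\xi^{-1}D^{2}\xi$ and their products with $b,\sigma$ in \eqref{(5.3)}--\eqref{(5.6)}.
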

\begin{proof}
	The basic idea of the proof comes from Theorem 8.6 in \cite{EKPP} and Theorem 2.2 in Appendix C of \cite{Peng3}. For reader's convenience, we shall give the sketch of the proof.\par
	We assume that both $u$ and $v$ are the viscosity solutions to the averaged PDE \eqref{(4.1)}. And also, $u$ and $v$ satisfy the polynomial growth condition and $u(T,x)=v(T,x)=\varphi(x)$, $x \in \mathbb{R}^{n} $. Without loss of generality, we only need to prove that $u \leq v $ when $u$ is the viscosity subsolution and $v$ is viscosity supersolution.\par
For some constant $\lambda \,\textgreater\, 0$ to be chosen below, let
	\begin{align}
			\widetilde{u}(t,x)&=u(t,x)e^{\lambda t}\xi^{-1}(x),\notag\quad
			\widetilde{v}(t,x)=v(t,x)e^{\lambda t}\xi^{-1}(x),\notag\\\notag
			\widetilde{S}(t,x)&=S(t,x)e^{\lambda t}\xi^{-1}(x),\hspace{1.6em}
			\widetilde{\varphi}(x)=\varphi(x)e^{\lambda T}\xi^{-1}(x),
	\end{align}
	where $\xi(x)=(1+\vert x\vert^{2})^{\frac{k}{2}}$. In addition, choosing an appropriate integer $k \in \mathbb{N}$ can derive that $\widetilde{u}$ and $\widetilde{v}$ are bounded. It is easy to check that $\widetilde{u}$ (resp. $\widetilde{v}$) is a bounded viscosity subsolution (resp. supersolution) of the following obstacle problem:
	\begin{equation}\label{(5.1)}
		\left\{\begin{matrix} 
			\begin{aligned}     		
				&\min\big(-\partial_{t}\widetilde{u}(t,x)+\lambda\widetilde{u}(t,x)-\bar{F}^{*}(x,\widetilde{u},D_{x}\widetilde{u},D^{2}_{x}\widetilde{u}),\,\widetilde{u}(t,x)-\widetilde{S}(t,x)\big)=0,\quad (t,x) \in (0,T)\times \mathbb{R}^{n},\\ 
				&\,\widetilde{u}(T,x)=\widetilde{\varphi}(x),\hspace{25.7em} x\in\mathbb{R}^{n},    	   	
			\end{aligned}
		\end{matrix}
		\right.
	\end{equation}
where the function $\bar{F}^{*}(x,v,p,A)$ is given by
\begin{align}
			e^{\lambda t}\xi^{-1}(x)\bar{F}(x,e^{-\lambda t}v\xi(x),e^{-\lambda t}(p\xi(x)+vD\xi(x)),e^{-\lambda t}(A\xi(x)+p\otimes D\xi(x)+ D\xi(x)\otimes p +vD^{2}\xi(x))),\notag
\end{align}
	for any $(x,v,p,A) \in \mathbb{R}^{n} \times \mathbb{R} \times \mathbb{R}^{1 \times n}  \times \mathbb{S}(n)$ with $p\otimes D\xi(x)=[p^{i}D\xi^{j}(x)]_{i,j}$. We note that there exists a constant $L \,\textgreater\, 0$ such that
	\begin{equation}\label{(5.2)}
		\begin{aligned}
			\vert \xi^{-1}(x)-\xi^{-1}(y)\vert &\leq L\vert x-y\vert,\\
			\vert \xi(x)-\xi(y)\vert \vee \vert D\xi(x)-D\xi(y) \vert &\leq L(1+\vert x\vert^{k}+\vert y\vert^{k})\vert x-y\vert.\\
		\end{aligned}
	\end{equation}
Without loss of generality, we assume that $h_{ij}=g_{ij}=0,\,i,j=1,\dots,d$. For $k \geq 2$, we establish the following functions:
	\begin{equation}\label{(5.3)}
		\begin{aligned}
			\eta(x) &\coloneqq \xi^{-1}(x)D\xi(x)=k(1+\vert x \vert^{2})^{-1}x,\\
			\kappa(x) &\coloneqq \xi^{-1}(x)D^{2}\xi(x)=k(1+\vert x \vert^{2})^{-1}\textbf{I}_{n}+k(k-2)(1+\vert x \vert^{2})^{-2}x\otimes x,
		\end{aligned}
	\end{equation}
where $D\xi$ denotes the gradient of $\xi$, and $D^{2}\xi$ denotes the matrix of second order partial derivatives of $\xi$. It is easy to check that $\eta(x)$ and $\kappa(x)$ are uniformly bounded functions satisfying globally Lipschitz condition.
For the convenience of the subsequent proof, we set $\widetilde{F}(x,v,p,A)\coloneqq \lambda v-\bar{F}^{*}(x,v,p,A)$, the following procedures mainly start from exploring the properties of the functions $\bar{F}^{\ast}$ and $\widetilde{F}$.\par
Firstly, we shall verify that the function $\bar{F}^{\ast}(x,v,p,A)$ satisfies Lipschitz condition with respect to $v$. For any $v,v' \in \mathbb{R}$, $(x,p,A) \in \mathbb{R}^{n} \times \mathbb{R}^{1 \times n}  \times \mathbb{S}(n)$, by the definition of $\bar{F}^{\ast}$ and $\bar{F}$, we derive that
	\begin{align}\label{(5.4)}
			&\,\bar{F}^{\ast}(x,v,p,A)-\bar{F}^{\ast}(x,v',p,A)\notag\\\notag
			=&\,e^{\lambda t}\xi^{-1}(x)\bar{F}(x,e^{-\lambda t}v\xi(x),\,e^{-\lambda t}(p\xi(x)+vD\xi(x)),\,e^{-\lambda t}(A\xi(x)+p\otimes D\xi(x) +D\xi(x)\otimes p+vD^{2}\xi(x)  )    )\\\notag
			&-e^{\lambda t}\xi^{-1}(x)\bar{F}(x,e^{-\lambda t}v'\xi(x),e^{-\lambda t}(p\xi(x)+v'D\xi(x)), e^{-\lambda t}(A\xi(x)+p\otimes D\xi(x) +D\xi(x)\otimes p+v'D^{2}\xi(x) ) )\\\notag
			\leq
			&\,e^{\lambda t}\xi^{-1}(x)\limsup_{s \rightarrow \infty}\frac{1}{s} \int_{0}^{s} e^{-\lambda t}(v-v')D\xi(x)b(r,x) dr\\\notag
			&+e^{\lambda t}\xi^{-1}(x)\limsup_{s \rightarrow \infty}\frac{1}{s} \int_{0}^{s} [\beta(r,x,v,p)-\beta(r,x,v',p)]dr\\\notag
			&+e^{\lambda t}\xi^{-1}(x)\limsup_{s \rightarrow \infty}\frac{1}{s} \int_{0}^{s} [G(\gamma(r,x,v,p,A))-G(\gamma(r,x,v',p,A))]dr\\
			\coloneqq
			&\,O_{1}+O_{2}+O_{3},
	\end{align}
where the coefficients $\beta(r,x,v,p)$ and $\gamma(r,x,v,p,A)$ satisfy
	\begin{align}
			\beta(r,x,v,p)&=f(r,x,e^{-\lambda t}v\xi(x),e^{-\lambda t}(p\xi(x)+vD\xi(x))\sigma(r,x)),\notag\\\notag
			\gamma(r,x,v,p,A)&=\sigma(r,x\mathbf{)}^\top\big[e^{-\lambda t}(A\xi(x)+p\otimes D\xi(x) +D\xi(x)\otimes p+vD^{2}\xi(x))\big] \sigma(r,x).
	\end{align}
According to assumption (H1) and \eqref{(5.3)}, it is easy to check that
\begin{align}\label{(5.5)}
			\vert \eta(x)b(r,x)\vert \leq Lk,\quad
			\vert \eta(x)\sigma(r,x)\vert \leq Lk.
\end{align}
As for $O_{1}$ and $O_{2}$, we derive that
	\begin{align}
			O_{1}=e^{\lambda t}\xi^{-1}(x)\limsup_{s \rightarrow \infty}\frac{1}{s} \int_{0}^{s} e^{-\lambda t}(v-v')D\xi(x)b(r,x) dr\notag
			\leq
		    C(L,k)\vert v-v'\vert,
	\end{align}
and
	\begin{align}
			O_{2}
			=
			&\,e^{\lambda t}\xi^{-1}(x)\limsup_{s \rightarrow \infty}\frac{1}{s} \int_{0}^{s} [\beta(r,x,v,p)-\beta(r,x,v',p)]dr\notag\\\notag
			\leq
			&\limsup_{s \rightarrow \infty}\frac{1}{s} \int_{0}^{s} L(1+\vert \eta(x)\sigma(r,x)\vert)\vert v-v'\vert dr\\\notag
			\leq
			&\,C(L,k)\vert v-v'\vert.
		\end{align}
In addition, we note that for any $x \in \mathbb{R}^{n}$,
	\begin{align}\label{(5.6)}
			\vert \sigma(r,x\mathbf{)}^\top \kappa(x) \sigma(r,x) \vert
			\leq
			Ck\vert \textbf{I}_{n}\vert+Ck(k-2)(1+\vert x \vert^{2})^{-1}\vert x\otimes x\vert
			\leq
			C(L,k,n),
	\end{align}
where the matrix norm $\vert X \vert \coloneqq \big(\sum\limits_{i=1}^{n}\sum\limits_{j=1}^{n}\vert x_{ij}\vert^2\big)^{\frac{1}{2}}$.\\
As for $O_{3}$, owing to $X \leq \vert X\vert \textbf{I}_d$ for each matrix $X \in \mathbb{S}(d)$, it is clear that $G(X) \leq \frac{1}{2}\bar{\sigma}^{2}d\vert X \vert$, where the matrix norm $\vert X \vert \coloneqq \sqrt{tr\{XX^{\top}\}} $. By the sublinear property of $G$, we have
		\begin{align}
			O_{3}
			=
			&\,e^{\lambda t}\xi^{-1}(x)\limsup_{s \rightarrow \infty}\frac{1}{s} \int_{0}^{s} [G(\gamma(r,x,v,p,A))-G(\gamma(r,x,v',p,A))]dr\notag\\\notag
			\leq
			&\limsup_{s \rightarrow \infty}\frac{1}{s} \int_{0}^{s} G\big(\sigma(r,x\mathbf{)}^\top[(v-v')\kappa(x)] \sigma(r,x)\big)dr\\\notag
			\leq
			&\,C(L,d,n,k,\bar{\sigma})\vert v-v'\vert.\notag
		\end{align}
Similarly, repeating the above discussions for the term $\bar{F}^{\ast}(x,v',p,A)-\bar{F}^{\ast}(x,v,p,A)$, we can get
	\begin{align}\label{(5.7)}
			\vert \bar{F}^{\ast}(x,v,p,A)-\bar{F}^{\ast}(x,v',p,A)\vert
			\leq
			C(L,d,n,k,\bar{\sigma})\,\vert v-v'\vert.
	\end{align}
Then, for any $(x,p,A) \in \mathbb{R}^{n} \times  \mathbb{R}^{1 \times n} \times \mathbb{S}(n) $, $v,v' \in \mathbb{R}$ and $v \geq v'$, by using \eqref{(5.7)}, we have
	\begin{align}\label{(5.8)}
			\widetilde{F}(x,v,p,A)-\widetilde{F}(x,v',p,A)
			=
			\lambda (v-v')-(\bar{F}^{*}(x,v,p,A)-\bar{F}^{*}(x,v',p,A))
			\geq
			(\lambda-C) (v-v'),
	\end{align}
where the positive constant $C$ depends on $L,d,n,k,\bar{\sigma}$.
Thus, choosing a large enough $\lambda$ can ensure that the function $\widetilde{F}(x,v,p,A)$ is strictly increasing with respect to the argument $v$.\par
Next, we will check that the function $\widetilde{F}(x,v,p,A)$ is degenerate elliptic regarding the matrix $A \in \mathbb{S}(n)$. For any matrix $A_{1}, A_{2} \in \mathbb{S}(n)$ satisfying $A_{1} \geq A_{2}$, we immediately see that 
	\begin{align}
			\widetilde{F}(x,v,p,A_{1})-\widetilde{F}(x,v,p,A_{2})
			=
			&\,\bar{F}^{*}(x,v,p,A_{2})-\bar{F}^{*}(x,v,p,A_{1})\notag\\
			=
			&\,e^{\lambda t}\xi^{-1}(x)\limsup_{s \rightarrow \infty}\frac{1}{s} \int_{0}^{s} [G(\gamma(r,x,v,p,A_{2}))-G(\gamma(r,x,v,p,A_{1}))]dr,\notag
	\end{align}
where the function $\gamma$ is defined in \eqref{(5.4)}. When $A_{1} \geq A_{2}$ holds, note that $\gamma(r,x,v,p,A_{1})\geq \gamma(r,x,v,p,A_{2})$, together with the sublinear property of $G$, we obtain that 
\begin{align}
		G(\gamma(r,x,v,p,A_{1}))\geq G(\gamma(r,x,v,p,A_{2})),\notag
\end{align}
which directly implies that
	\begin{align}\label{(5.9)}
			\widetilde{F}(x,v,p,A_{1}) 
			\leq
			\widetilde{F}(x,v,p,A_{2}).
		\end{align}
Thus, the function $\widetilde{F}(x,v,p,A)$ is equiped with the properties, i.e., degenerate ellipticity and \eqref{(5.8)}, we say $\widetilde{F}$ is ``proper'' in the terminology of \cite{CIP}.\par
From now on, we make a last modification of $\widetilde{v}$. For any $\varepsilon \,\textgreater\, 0$, the ``transformed'' function $\widetilde{v}_{1}$ satisfies the following structure:${}\vspace{0.85em}$
	\begin{equation}\label{(5.10)}
		\widetilde{v}_{1}(t,x)=
		\left\{\begin{matrix} 
			\begin{aligned}     		
				&\widetilde{v}(t,x)+\frac{\varepsilon}{t},\ \,t \in \left(0,T\right],\\ 	
				&\,\infty,\hspace{3.95em} t=0.	   		
			\end{aligned}
		\end{matrix}
		\right.\vspace{0.85em}
	\end{equation}
In addition, it is easy to check that $\widetilde{v}_{1}$ is a continuous function with respect to $t$, uniformly in $x$. Moreover, since $\widetilde{v}$ is the viscosity supersolution of \eqref{(5.1)} and $\widetilde{F}$ is ``proper'', for any $(t,x) \in (0,T)\times \mathbb{R}^{n}$, we have
\begin{equation}
\begin{aligned}\label{(5.11)}  		
		-(\partial_{t}\widetilde{v}_{1}+\frac{\varepsilon}{t^{2}})+\widetilde{F}(x,\widetilde{v}_{1},D_{x}\widetilde{v}_{1},D^{2}_{x}\widetilde{v}_{1}) 
		\geq 
		&-(\partial_{t}\widetilde{v}_{1}+\frac{\varepsilon}{t^{2}})+\widetilde{F}(x,\widetilde{v},D_{x}\widetilde{v}_{1},D^{2}_{x}\widetilde{v}_{1}) \\
		=&-\partial_{t}\widetilde{v}+\widetilde{F}(x,\widetilde{v},D_{x}\widetilde{v},D^{2}_{x}\widetilde{v}) \\
		\geq &\ 0.
\end{aligned}
\end{equation}
Thus, we only need to verify that the ``transformed'' function $\widetilde{v}_{1}$ satisfies $\widetilde{u}(t,x) \leq \widetilde{v}_{1}(t,x)$ for any $(t,x) \in [0,T] \times \mathbb{R}^n$. For any $R \,\textgreater\, 0$, define $B_{R} \coloneqq\{x \in \mathbb{R}^{n}; \vert x\vert \leq R\}$, we only verify that for any $R \,\textgreater\, 0$,
	\begin{align}  \label{(5.12)}		
			\sup_{[0.T] \times B_{\!R}}(\widetilde{u}-\widetilde{v}_{1})^{+} (t,x)
			\leq 
			\!\!\!\sup_{[0,T] \times \partial\!B_{\!R} }(\widetilde{u}-\widetilde{v}_{1})^{+} (t,x),
	\end{align}
since the right-hand side tends to zero as $R \rightarrow \infty$.\par
To this end, let us suppose that for some $R \,\textgreater\,  0$, there exists $(t_{0},x_{0}) \in [0,T] \times B_{R}$ such that
	\begin{equation}\label{(5.13)}
		\delta=\widetilde{u}(t_{0},x_{0})-\widetilde{v}_{1}(t_{0},x_{0}) 
		\,\textgreater 
		\!\!\!\sup_{[0,T] \times \partial\!{B}_{\!R} }(\widetilde{u}-\widetilde{v}_{1})^{+} (t,x)
		\geq 0,
	\end{equation}
and we will find a contradiction. Thus, for each $\alpha \,\textgreater\, 0 $, we can establish the following function $\Gamma_{\alpha}$:
	\begin{equation}\label{(5.14)}
		\Gamma_{\alpha}(t,x,y)
		=\widetilde{u}(t,x)-\widetilde{v}_{1}(t,y)-\frac{\alpha}{2}\vert x-y\vert^{2}.
	\end{equation}\par
Note that for each $\alpha \,\textgreater\, 0 $, $\Gamma_{\alpha}$ is a continuous function regarding the argument $t,\,x,\,y$, we can directly see that there exists $(t_{\alpha},x_{\alpha},y_{\alpha})$ in the compact set of $[0,T] \times \bar{B}_{R} \times \bar{B}_{R}$, such that $\Gamma_{\alpha}$ can reach its maximum value at this point. And also, since $\widetilde{u}$ is bounded, we can get that $\Gamma_{\alpha}(t_{0},x_{0},x_{0})=\delta \,\textgreater\,0$ and $\Gamma_{\alpha}(0,x,y)=-\infty$ hold for any $(x,y) \in \mathbb{R}^{n} \times \mathbb{R}^{n}$, which can imply $t_{0},t_{\alpha} \,\textgreater\, 0$. Given that  $\widetilde{u},\widetilde{v}$ are bounded, we have
\begin{align}  
	-\infty 
	\textless
	\,\Gamma_{\alpha}(t_{\alpha},x_{\alpha},y_{\alpha})= &\,\widetilde{u}(t_{\alpha},x_{\alpha})-\widetilde{v}_{1}(t_{\alpha},y_{\alpha})-\frac{\alpha}{2}\vert x_{\alpha}-y_{\alpha}\vert^{2} \notag\\\notag
=&\,\widetilde{u}(t_{\alpha},x_{\alpha})-\widetilde{v}(t_{\alpha},y_{\alpha})-\frac{\varepsilon}{t_{\alpha}}-\frac{\alpha}{2}\vert x_{\alpha}-y_{\alpha}\vert^{2}\notag\\\notag
\leq
&\,\widetilde{u}(t_{\alpha},x_{\alpha})-\widetilde{v}(t_{\alpha},y_{\alpha})\\
\textless
&\,C.\notag
\end{align}
Thus, motivated by Lemma 3.1 in \cite{CIP}, we get the following results:
	\begin{equation}\label{(5.15)}
		\left\{\begin{matrix} 
			\begin{aligned}  		
				&{}\hspace{0.40em}\mathrm{(i)}\ \mathrm{For}\ \alpha\ \mathrm{large}\ \mathrm{enough},\  (t_{\alpha},x_{\alpha},y_{\alpha}) \in (0,T) \times B_{R} \times B_{R}; \\ 				
				&\,\mathrm{(ii)}\,\lim\limits_{\alpha \rightarrow \infty}\alpha \vert x_{\alpha}-y_{\alpha} \vert^{2}=0,\   \lim\limits_{\alpha \rightarrow \infty}\vert x_{\alpha}-y_{\alpha} \vert^{2}=0;\\
				&\mathrm{(iii)}\ \widetilde{u}(t_{\alpha},x_{\alpha})
				\geq
				\widetilde{v}_{1}(t_{\alpha},y_{\alpha})+\delta;\\
				&\mathrm{(iv)}\,\lim\limits_{\alpha \rightarrow \infty} \widetilde{u}(t_{\alpha},x_{\alpha})-\widetilde{v}_{1}(t_{\alpha},y_{\alpha})-\frac{\alpha}{2}\vert x_{\alpha}-y_{\alpha}\vert^{2}
				=
				\!\!\!\!\sup_{[0,T] \times \bar{B}_{\!R} }(\widetilde{u}-\widetilde{v}_{1})(t,x)=(\widetilde{u}-\widetilde{v}_{1})(t^{\ast},x^{\ast});
			\end{aligned}
		\end{matrix}
		\right.
	\end{equation}
where the point $(t^{\ast},x^{\ast},x^{\ast})$ is the limit point of the sequence$(t_{\alpha},x_{\alpha},y_{\alpha}) $ as $\alpha \rightarrow \infty$. We note that
	\begin{align}
		(\widetilde{u}-\widetilde{v}_{1})(t^{\ast},x^{\ast})
		=&\!\!\!\!\sup_{[0,T] \times\bar{B}_{\!R} }(\widetilde{u}-\widetilde{v}_{1})(t,x)
		\geq
		(\widetilde{u}-\widetilde{v}_{1})(t_{0},x_{0})=\delta \,\textgreater\,0,\notag\\\notag
		(&\widetilde{u}-\widetilde{v}_{1})(0,x)=-\infty,\quad\quad \forall x \in \mathbb{R}^n.\notag
	\end{align}
which indicates $t^{\ast} \,\textgreater\,0$.\par
By Theorem 8.3 in \cite{CIP}, there exists $(p,X,Y) \in \mathbb{R} \times \mathbb{S}(n) \times \mathbb{S}(n)$, such that
		\begin{align}
			(p,\alpha(x_{\alpha}-y_{\alpha}),X) &\in \mathcal{P}^{2,+}\widetilde{u}(t_{\alpha},x_{\alpha}),\notag\\\notag
			(p,\alpha(x_{\alpha}-y_{\alpha}),Y) &\in \mathcal{P}^{2,-}\widetilde{v}_{1}(t_{\alpha},y_{\alpha}),
		\end{align}
and
	\begin{align}\label{(5.16)}
		\begin{pmatrix}
			X & \ 0\\
			0 & -Y
		\end{pmatrix}
		\leq 3\alpha
		\begin{pmatrix}
			\,\textbf{I}_{n} & -\textbf{I}_{n}\\
			-\textbf{I}_{n} & \,\textbf{I}_{n}
		\end{pmatrix}.
	\end{align}\par
Given that $\widetilde{v}$ is a viscosity supersolution, from (iii) in \eqref{(5.15)}, we get $\widetilde{u}(t_{\alpha},x_{\alpha}) \geq \widetilde{S}(t_{\alpha},y_{\alpha})$. In addition, since the obstacle $\widetilde{S}$ is uniformly continuous on compact sets, for $\alpha$ large enough, it is easy to check that $\widetilde{u}(t_{\alpha},x_{\alpha}) \,\textgreater\, \widetilde{S}(t_{\alpha},x_{\alpha})$. Hence since $\widetilde{u}$ is a viscosity subsolution, we have
	\begin{align}  		
			-p+\widetilde{F}(x_{\alpha},\widetilde{u}(t_{\alpha},x_{\alpha}),\alpha(x_{\alpha}-y_{\alpha}),X) 
			\leq 0,\notag
	\end{align}
and by \eqref{(5.11)}, we obtain
	\begin{align}  	\label{(5.17)}
		-p+\widetilde{F}(y_{\alpha},\widetilde{v}_{1}(t_{\alpha},y_{\alpha}),\alpha(x_{\alpha}-y_{\alpha}),Y) 
		\geq \frac{\varepsilon}{t^{2}}.
	\end{align}
Since $\widetilde{F}$ is ``proper'', and by (iii) in \eqref{(5.15)}, we can derive that
	\begin{equation}
		\begin{aligned}  		
			-p+\widetilde{F}(x_{\alpha},\widetilde{v}_{1}(t_{\alpha},y_{\alpha}),\alpha(x_{\alpha}-y_{\alpha}),X) 
			\leq 
			-p+\widetilde{F}(x_{\alpha},\widetilde{u}(t_{\alpha},x_{\alpha}),\alpha(x_{\alpha}-y_{\alpha}),X) \notag
			\leq
			0,
		\end{aligned}
	\end{equation}
which together with \eqref{(5.17)} implies that
	\begin{align} \label{(5.18)}
			\frac{\varepsilon}{t^{2}}
			\leq
			\widetilde{F}(y_{\alpha},\widetilde{v}_{1}(t_{\alpha},y_{\alpha}),\alpha(x_{\alpha}-y_{\alpha}),Y) 
			-\widetilde{F}(x_{\alpha},\widetilde{v}_{1}(t_{\alpha},y_{\alpha}),\alpha(x_{\alpha}-y_{\alpha}),X).
	\end{align}\par
Finally, we shall process the right side of \eqref{(5.18)}, by the definition of $\widetilde{F}$, it is easy to get
	\begin{equation}\label{(5.19)}
		\begin{aligned}  
			&\,\widetilde{F}(y_{\alpha},\widetilde{v}_{1}(t_{\alpha},y_{\alpha}),\alpha(x_{\alpha}-y_{\alpha}),Y) 
			-\widetilde{F}(x_{\alpha},\widetilde{v}_{1}(t_{\alpha},y_{\alpha}),\alpha(x_{\alpha}-y_{\alpha}),X)\\
			=
			&\,\bar{F}^{\ast}(x_{\alpha},\widetilde{v}_{1}(t_{\alpha},y_{\alpha}),\alpha(x_{\alpha}-y_{\alpha}),X) 
			-\bar{F}^{\ast}(y_{\alpha},\widetilde{v}_{1}(t_{\alpha},y_{\alpha}),\alpha(x_{\alpha}-y_{\alpha}),Y)\\
			=
			&\,e^{\lambda t}\xi^{-1}(x_{\alpha})\bar{F}(x_{\alpha},\beta_{1},p_{1},A_{1}) 
			-e^{\lambda t}\xi^{-1}(y_{\alpha})\bar{F}(y_{\alpha},\beta_{2},p_{2},A_{2}),
		\end{aligned}
	\end{equation}
where the above coefficients are given by
	\begin{align}  
		\beta_{1}=
\ &\widetilde{v}_{1}(t_{\alpha},y_{\alpha})e^{-\lambda t}\xi(x_{\alpha}),\ 
			\beta_{2}=\widetilde{v}_{1}(t_{\alpha},y_{\alpha})e^{-\lambda t}\xi(y_{\alpha}),\notag\\\notag
			p_{1}=\ &e^{-\lambda t}\xi(x_{\alpha})\alpha(x_{\alpha}-y_{\alpha})+e^{-\lambda t}\widetilde{v}_{1}(t_{\alpha},y_{\alpha})D\xi(x_{\alpha}),\\\notag
			p_{2}=\ &e^{-\lambda t}\xi(y_{\alpha})\alpha(x_{\alpha}-y_{\alpha})+e^{-\lambda t}\widetilde{v}_{1}(t_{\alpha},y_{\alpha})D\xi(y_{\alpha}),\\\notag
			A_{1}=\ &e^{-\lambda t}\xi(x_{\alpha})X+e^{-\lambda t}(D\xi(x_{\alpha}) \otimes \alpha(x_{\alpha}-y_{\alpha}))+e^{-\lambda t}(\alpha(x_{\alpha}-y_{\alpha}) \otimes D\xi(x_{\alpha}))
			+e^{-\lambda t}\widetilde{v}_{1}(t_{\alpha},y_{\alpha})D^{2}\xi(x_{\alpha}),\\\notag
			A_{2}=\ &e^{-\lambda t}\xi(y_{\alpha})Y+e^{-\lambda t}(D\xi(y_{\alpha}) \otimes \alpha(x_{\alpha}-y_{\alpha}))+e^{-\lambda t}(\alpha(x_{\alpha}-y_{\alpha}) \otimes D\xi(y_{\alpha}))
			+e^{-\lambda t}\widetilde{v}_{1}(t_{\alpha},y_{\alpha})D^{2}\xi(y_{\alpha}).\notag
		\end{align}
Thus, by assumptions (H1)-(H3) and (H5), we have
\begin{align} \label{(5.20)}
			&\,\widetilde{F}(y_{\alpha},\widetilde{v}_{1}(t_{\alpha},y_{\alpha}),\alpha(x_{\alpha}-y_{\alpha}),Y) 
			-\widetilde{F}(x_{\alpha},\widetilde{v}_{1}(t_{\alpha},y_{\alpha}),\alpha(x_{\alpha}-y_{\alpha}),X)\notag\\\notag
			\leq
			&\limsup_{s \rightarrow \infty} \frac{1}{s}\int_{0}^{s}\big[e^{\lambda t}\xi^{-1}(x_{\alpha})p_{1}b(r,x_{\alpha})-e^{\lambda t}\xi^{-1}(y_{\alpha})p_{2}b(r,y_{\alpha})\big]dr\\\notag
			&+\limsup_{s \rightarrow \infty} \frac{1}{s}\int_{0}^{s}\big[e^{\lambda t}\xi^{-1}(x_{\alpha})f(r,x_{\alpha},\beta_{1},p_{1}\sigma(r,x_{\alpha}))-e^{\lambda t}\xi^{-1}(y_{\alpha})f(r,y_{\alpha},\beta_{2},p_{2}\sigma(r,y_{\alpha}))\big]dr\\\notag
			&+\limsup_{s \rightarrow \infty} \frac{1}{s}\int_{0}^{s}\big[e^{\lambda t}\xi^{-1}(x_{\alpha})G(H(r,x_{\alpha},\beta_{1},p_{1},A_{1}))-e^{\lambda t}\xi^{-1}(y_{\alpha})G(H(r,y_{\alpha},\beta_{2},p_{2},A_{2}))\big]dr\\
			\coloneqq
			&\,U_{1}+U_{2}+U_{3}.
\end{align}
As for $U_{1}$, from assumption (H1) and the properties of the function $\eta$, we derive that
	\begin{align}  
			U_{1}
			=&\limsup_{s \rightarrow \infty} \frac{1}{s}\int_{0}^{s}\big[e^{\lambda t}\xi^{-1}(x_{\alpha})p_{1}b(r,x_{\alpha})-e^{\lambda t}\xi^{-1}(y_{\alpha})p_{2}b(r,y_{\alpha})\big]dr\notag\\
			\leq
			&\limsup_{s \rightarrow \infty} \frac{1}{s}\int_{0}^{s}
			\alpha\vert x_{\alpha}-y_{\alpha}\vert\vert b(r,x_{\alpha})-b(r,y_{\alpha})\vert dr\notag\\
			&+\limsup_{s \rightarrow \infty} \frac{1}{s}\int_{0}^{s}
			\big\vert \widetilde{v}_{1}(t_{\alpha},y_{\alpha})\xi^{-1}(x_{\alpha})D\xi(x_{\alpha})b(r,x_{\alpha})
			-\widetilde{v}_{1}(t_{\alpha},y_{\alpha})\xi^{-1}(y_{\alpha})D\xi(y_{\alpha})b(r,y_{\alpha})\big\vert dr\notag\\
			\leq
			&\,C(L,k)(1+\vert x_{\alpha}\vert)(\vert \widetilde{v}_{1}(t_{\alpha},y_{\alpha}) \vert \vert x_{\alpha}-y_{\alpha} \vert+\alpha\vert x_{\alpha}-y_{\alpha} \vert^{2}).\notag
	\end{align}
As for $U_{2}$, according to assumptions (H1)-(H2) and \eqref{(5.2)}, it is easy to check that
	\begin{align}  
			\hspace{-4em}U_{2}
			=
			&\limsup_{s \rightarrow \infty} \frac{1}{s}\int_{0}^{s}\big[e^{\lambda t}\xi^{-1}(x_{\alpha})f(r,x_{\alpha},\beta_{1},p_{1}\sigma(r,x_{\alpha}))-e^{\lambda t}\xi^{-1}(y_{\alpha})f(r,y_{\alpha},\beta_{2},p_{2}\sigma(r,y_{\alpha}))\big]dr\notag\\\notag
			\leq
			&\limsup_{s \rightarrow \infty} \frac{1}{s}\int_{0}^{s}
			\big\vert e^{\lambda t}\xi^{-1}(x_{\alpha}) -e^{\lambda t}\xi^{-1}(y_{\alpha}) \big\vert \big\vert f(r,x_{\alpha},\beta_{1},p_{1}\sigma(r,x_{\alpha}))\big\vert dr\\\notag        
			&+\limsup_{s \rightarrow \infty} \frac{1}{s}\int_{0}^{s}
			\big\vert e^{\lambda t}\xi^{-1}(y_{\alpha})\big\vert \big\vert f(r,x_{\alpha},\beta_{1},p_{1}\sigma(r,x_{\alpha}))-f(r,y_{\alpha},\beta_{2},p_{2}\sigma(r,y_{\alpha}))\big\vert dr\notag\\\notag
			\leq
			&\,C(L,T)\big(1+\vert x_{\alpha}\vert^{(k+2)\vee (m+1)}+\vert y_{\alpha}\vert^{(k+2)\vee (m+1)}\big)\big[(1+\vert \widetilde{v}_{1}(t_{\alpha},y_{\alpha}) \vert) \vert x_{\alpha}-y_{\alpha} \vert+\alpha\vert x_{\alpha}-y_{\alpha} \vert^{2}\big].
		\end{align}
As for $U_{3}$, by the sublinear property of $G$ and similar discussions as before, we get
	\begin{align}
			\!\!\!U_{3}
			=&
			\limsup_{s \rightarrow \infty} \frac{1}{s}\int_{0}^{s}\big[e^{\lambda t}\xi^{-1}(x_{\alpha})G(H(r,x_{\alpha},\beta_{1},p_{1},A_{1}))-e^{\lambda t}\xi^{-1}(y_{\alpha})G(H(r,y_{\alpha},\beta_{2},p_{2},A_{2}))\big]dr\notag\\\notag
			\leq  
			&\limsup_{s \rightarrow \infty} \frac{1}{s}\int_{0}^{s}
			\!G\big(\sigma(r,x_{\alpha}\mathbf{)}^\top \!X\sigma(r,x_{\alpha})-\sigma(r,y_{\alpha}\mathbf{)}^\top \!Y\sigma(r,y_{\alpha})\big)dr\notag \\ \notag 
			&\!+\limsup_{s \rightarrow \infty} \frac{1}{s}\!\int_{0}^{s}
			\!G\big(\sigma(r,x_{\alpha}\mathbf{)}^\top \!\widetilde{v}_{1}(t_{\alpha},y_{\alpha})\kappa(x_{\alpha})\sigma(r,x_{\alpha})-\sigma(r,y_{\alpha}\mathbf{)}^\top\widetilde{v}_{1}(t_{\alpha},y_{\alpha})\kappa(y_{\alpha})\sigma(r,y_{\alpha})\big)dr \notag\\  \notag             
			&\!+\limsup_{s \rightarrow \infty} \frac{1}{s}\!\int_{0}^{s}
			\!G\big(\sigma(r,x_{\alpha}\mathbf{)}^\top\!(\alpha(x_{\alpha}-y_{\alpha})  \otimes \eta(x_{\alpha}))\sigma(r,x_{\alpha})
			-\sigma(r,y_{\alpha}\mathbf{)}^\top\!(\alpha(x_{\alpha}-y_{\alpha})  \otimes  \eta(y_{\alpha}))\sigma(r,y_{\alpha})\big)dr \notag
		\end{align}
	\begin{align}
			&+\limsup_{s \rightarrow \infty} \frac{1}{s}\!\int_{0}^{s}
			\!G\big(\sigma(r,x_{\alpha}\mathbf{)}^\top\!(\eta(x_{\alpha}) \otimes \alpha(x_{\alpha}-y_{\alpha}))\sigma(r,x_{\alpha})
			-\sigma(r,y_{\alpha}\mathbf{)}^\top\!(\eta(y_{\alpha}) \otimes \alpha(x_{\alpha}-y_{\alpha}))\sigma(r,y_{\alpha})\big)dr\notag\\ \notag
			\coloneqq
			&\,V_{1}+V_{2}+V_{3}+V_{4}.
		\end{align}
As for $V_{1}$, by \eqref{(5.16)} and $G(X)\leq \frac{1}{2}\bar{\sigma}^{2}tr[X]$ for any $0 \leq X \in \mathbb{S}(d)$, we derive that
	\begin{equation}
		\begin{aligned}  
			V_{1}
			=
			&\limsup_{s \rightarrow \infty} \frac{1}{s}\int_{0}^{s}
			G\big(\sigma(r,x_{\alpha}\mathbf{)}^\top X\sigma(r,x_{\alpha})-\sigma(r,y_{\alpha}\mathbf{)}^\top Y\sigma(r,y_{\alpha})\big)dr \\	
			\leq
			&\limsup_{s \rightarrow \infty} \frac{1}{s}\int_{0}^{s}
			G\big(3\alpha (\sigma(r,x_{\alpha})-\sigma(r,y_{\alpha})\mathbf{)}^\top(\sigma(r,x_{\alpha})-\sigma(r,y_{\alpha}))\big)dr \\	\notag
			\leq
			&\,C(L,\bar{\sigma})\alpha\vert x_{\alpha}-y_{\alpha}\vert^{2}.
		\end{aligned}
	\end{equation}
Likewise, as for $V_{2}$ and $V_{3}$, from assumptions (H1)-(H2) and the properties of $\kappa$ and $G$, we get
	\begin{equation}
		\begin{aligned}  
			V_{2}
			=
			&\limsup_{s \rightarrow \infty} \frac{1}{s}\int_{0}^{s}
			G\big(\sigma(r,x_{\alpha}\mathbf{)}^\top\widetilde{v}_{1}(t_{\alpha},y_{\alpha})\kappa(x_{\alpha})\sigma(r,x_{\alpha})-\sigma(r,y_{\alpha}\mathbf{)}^\top\widetilde{v}_{1}(t_{\alpha},y_{\alpha})\kappa(y_{\alpha})\sigma(r,y_{\alpha})\big)dr \\  
			\leq
			&\limsup_{s \rightarrow \infty} \frac{1}{s}\int_{0}^{s}
			\frac{1}{2}\bar{\sigma}^{2}d\big\vert \sigma(r,x_{\alpha}\mathbf{)}^\top\widetilde{v}_{1}(t_{\alpha},y_{\alpha})\kappa(x_{\alpha})\sigma(r,x_{\alpha})-\sigma(r,y_{\alpha}\mathbf{)}^\top\widetilde{v}_{1}(t_{\alpha},y_{\alpha})\kappa(y_{\alpha})\sigma(r,y_{\alpha}) \big\vert dr \\	
			\leq
			&\,C(L,\bar{\sigma},d)\vert \widetilde{v}_{1}(t_{\alpha},y_{\alpha}) \vert   \vert x_{\alpha}-y_{\alpha}\vert(1+\vert x_{\alpha}\vert^{2}+\vert y_{\alpha}\vert^{2}),\notag
		\end{aligned}
	\end{equation}
and
	\begin{align}  
			\!\!\!\!\!V_{3}
			=
			&\limsup_{s \rightarrow \infty} \!\frac{1}{s}\!\int_{0}^{s}
			\!G\big(\sigma(r,x_{\alpha}\mathbf{)}^\top\!(\alpha(x_{\alpha}-y_{\alpha})  \otimes \eta(x_{\alpha}))\sigma(r,x_{\alpha})
			-\sigma(r,y_{\alpha}\mathbf{)}^\top\!(\alpha(x_{\alpha}-y_{\alpha})  \otimes  \eta(y_{\alpha}))\sigma(r,y_{\alpha})\big)dr\notag\\\notag 
			\leq
			&\limsup_{s \rightarrow \infty} \frac{1}{s}\!\int_{0}^{s}
			\!\frac{\bar{\sigma}^{2}d}{2} \big\vert \sigma(r,x_{\alpha}\mathbf{)}^\top\!(\alpha(x_{\alpha}-y_{\alpha})  \otimes \eta(x_{\alpha}))\sigma(r,x_{\alpha})
			-\sigma(r,y_{\alpha}\mathbf{)}^\top\!(\alpha(x_{\alpha}-y_{\alpha})  \otimes  \eta(y_{\alpha}))\sigma(r,y_{\alpha})\big\vert dr\\\notag
			\leq
			&\,C(L,\bar{\sigma},d)\alpha\vert x_{\alpha}-y_{\alpha}\vert^{2}(1+\vert x_{\alpha}\vert^{2}+\vert y_{\alpha}\vert^{2}).
		\end{align}
Similarly, it is easy to check that
\begin{align}  
			V_{4}		
			\leq
			&\,C(L,\bar{\sigma},d)\alpha\vert x_{\alpha}-y_{\alpha}\vert^{2}(1+\vert x_{\alpha}\vert^{2}+\vert y_{\alpha}\vert^{2}).\notag
\end{align}
Thus, we get
\begin{align}  
	U_{3}		
	\leq
	&\,C(L,\bar{\sigma},d)(1+\vert x_{\alpha}\vert^{2}+\vert y_{\alpha}\vert^{2})(\vert \widetilde{v}_{1}(t_{\alpha},y_{\alpha}) \vert\vert x_{\alpha}-y_{\alpha}\vert+\alpha\vert x_{\alpha}-y_{\alpha}\vert^{2}),\notag
\end{align}
and consequently we derive that
	\begin{equation}
		\begin{aligned}   
			&\,\widetilde{F}(y_{\alpha},\widetilde{v}_{1}(t_{\alpha},y_{\alpha}),\alpha(x_{\alpha}-y_{\alpha}),Y) 
			-\widetilde{F}(x_{\alpha},\widetilde{v}_{1}(t_{\alpha},y_{\alpha}),\alpha(x_{\alpha}-y_{\alpha}),X)\\\notag
			\leq
			&\,U_1+U_2+U_3\\
			\leq
			&\,C(L,\bar{\sigma},d)\big(1+\vert x_{\alpha}\vert^{(m+1)\vee (k+2)}+\vert y_{\alpha}\vert^{(m+1)\vee (k+2)}\big)\big[(\vert \widetilde{v}_{1}(t_{\alpha},y_{\alpha}) \vert+1)\vert x_{\alpha}-y_{\alpha}\vert+\alpha\vert x_{\alpha}-y_{\alpha}\vert^{2}\big],
		\end{aligned}
	\end{equation}
which together with \eqref{(5.18)} can indicate that
		\begin{align}  \label{(5.21)}
			\frac{\varepsilon}{t^2}
			\leq
			&\,C(L,\bar{\sigma},d)\big(1+\vert x_{\alpha}\vert^{(m+1)\vee (k+2)}+\vert y_{\alpha}\vert^{(m+1)\vee (k+2)}\big)\big[(\vert \widetilde{v}_{1}(t_{\alpha},y_{\alpha}) \vert+1)\vert x_{\alpha}-y_{\alpha}\vert+\alpha\vert x_{\alpha}-y_{\alpha}\vert^{2}\big].
		\end{align}
And also, since $t_{\alpha},\,t^{\ast} \,\textgreater\, 0$ and $\widetilde{v}$ are uniformly bounded, we can get that $\widetilde{v}_{1}(t_{\alpha},y_{\alpha}) \,\textless +\infty$ and $\widetilde{v}_{1}(t^{\ast},x^{\ast}) \,\textless +\!\infty$ hold. With the help of \eqref{(5.15)}, the right side of \eqref{(5.21)} tends to zero as $\alpha \rightarrow \infty$, which induces a contradiction. To be precise, we get $\widetilde{u}(t,x) \leq \widetilde{v}_{1}(t,x)$ for any $(t,x) \in [0,T] \times \mathbb{R}^n$, which implies that $u \leq v$ by using \eqref{(5.10)} and the continuity of $u,v$. Thus, the averaged PDE \eqref{(4.1)} has a unique viscosity solution.
\end{proof}\par

	\bigskip

\end{document}